\tikzset{every tree node/.style={draw, circle, black, semithick, inner sep = 2pt, minimum size = 10pt}}
\tikzstyle{arrow} = [semithick,->,>=stealth]
\tikzset{edge from parent/.append style={arrow, edge from parent path = {(\tikzparentnode) -- (\tikzchildnode)}}}
\tikzset{level distance = 40pt, sibling distance = 20pt}
\tikzset{gate/.style={draw, circle, black, semithick, inner sep = 1.5pt}}
\numberwithin{equation}{section}
\theoremstyle{plain}
\newtheorem{thm}{Theorem}[section]
\newtheorem{prop}[thm]{Proposition}
\newtheorem{lem}[thm]{Lemma}
\newtheorem{cor}[thm]{Corollary}
\newtheorem{defin}[thm]{Definition}
\newtheorem{rem}[thm]{Remark}
\crefname{thm}{Theorem}{Theorems}
\Crefname{thm}{Theorem}{Theorems}
\crefname{prop}{Proposition}{Propositions}
\Crefname{prop}{Proposition}{Propositions}
\crefname{lem}{Lemma}{Lemmas}
\Crefname{lem}{Lemma}{Lemmas}
\crefname{cor}{Corollary}{Corollaries}
\Crefname{cor}{Corollary}{Corollaries}
\crefname{rem}{Remark}{Remarks}
\Crefname{rem}{Remark}{Remarks}
\newcommand{\Powerset}[1]{2^{#1}}
\newcommand{\cA}{\mathcal{A}}
\newcommand{\dG}{\mathbb{G}}
\newcommand{\dM}{\mathbb{M}}
\newcommand{\DLOGTIME}{\ensuremath{\mathsf{DLOGTIME}}}
\newcommand{\LogCFL}{\ensuremath{\mathsf{LogCFL}}}
\newcommand{\Poly}{\ensuremath{\mathsf{P}}}
\newcommand{\LogDCFL}{\ensuremath{\mathsf{LogDCFL}}}
\newcommand{\NP}{\mathsf{NP}}
\newcommand{\TCZero}{\mathsf{TC}^0}
\newcommand{\Periods}{\mathbb{P}}
\newcommand{\CompPeriods}{\mathbb{P}}
\newcommand{\N}{\mathbb{N}}
\newcommand{\Z}{\mathbb{Z}}
\newcommand{\x}{\mathsf{x}}
\newcommand{\y}{\mathsf{y}}
\newcommand{\mixedNorm}[1]{\|#1\|_{\mathsf{m}}}
\begin{document}

\title{The Complexity of Knapsack in Graph Groups}

\author{Markus Lohrey}
\address{Universit\"at Siegen, Germany}
\email{lohrey@eti.uni-siegen.de}

\author{Georg Zetzsche}
\address{LSV, CNRS \& ENS Cachan, Universit\'{e} Paris-Saclay, France}
\thanks{The second author is supported by a fellowship within the
Postdoc-Program of the German Academic Exchange Service (DAAD)}
\email{zetzsche@lsv.fr}

\begin{abstract}
Myasnikov et al. have introduced the knapsack problem for arbitrary finitely
generated groups.  In \cite{LohreyZ16} the authors proved that for each graph
group, the knapsack problem can be solved in {\sf NP}. Here, we determine the
exact complexity of the problem for every graph group. While the problem is
$\TCZero$-complete for complete graphs, it is $\LogCFL$-complete for each
(non-complete) transitive forest. For every remaining graph, the problem is
$\NP$-complete.
\end{abstract}

\maketitle

\section{Introduction}

In their paper \cite{MyNiUs14}, Myasnikov, Nikolaev, and Ushakov started the investigation of classical 
discrete optimization problems, which are classically formulated over the integers,
for arbitrary (possibly non-commutative) groups. The general goal of this line of research is to study 
to what extent results from the classical commutative setting can be transferred to the non-commutative setting.
Among other problems, Myasnikov et al.~introduced for a finitely generated group $G$ 
the {\em knapsack problem} and the {\em subset sum problem}.
The input for the knapsack problem is a sequence of group elements $g_1, \ldots, g_k, g \in G$ (specified
by finite words over the generators of $G$) and it is asked whether there exists a solution
$(x_1, \ldots, x_k) \in \mathbb{N}^k$
of the equation $g_1^{x_1} \cdots g_k^{x_k} = g$. For the subset sum problem one restricts the solution
to $\{0,1\}^k$.

For the particular case $G = \mathbb{Z}$  (where the additive notation 
$x_1 \cdot g_1 + \cdots + x_k \cdot g_k = g$ is usually preferred)
these problems are {\sf NP}-complete if the numbers $g_1, \ldots, g_k,g$ are 
encoded in binary representation. For subset sum, this is a classical result from Karp's seminal paper \cite{Karp72} on 
{\sf NP}-completeness. Knapsack for integers is usually formulated in a more general form in the literature;
{\sf NP}-completeness of the above form (for binary encoded integers) was shown  in \cite{Haa11}, where 
the problem was called \textsc{multisubset sum}.\footnote{Note that if we ask for a solution $(x_1,\ldots, x_k)$
in $\mathbb{Z}^k$, then knapsack can be solved in polynomial time (even for binary encoded integers) by checking whether
$\mathrm{gcd}(g_1, \ldots, g_k)$ divides $g$.}
Interestingly, if we consider subset sum  for the group 
$G = \mathbb{Z}$, but encode the input numbers  $g_1, \ldots, g_k, g$ 
in unary notation, then the problem is in {\sf DLOGTIME}-uniform
$\mathsf{TC}^0$ (a small subclass of polynomial time and even of logarithmic space that captures the complexity
of multiplication of binary encoded numbers; see e.g.
the book \cite{Vol99} for more details) \cite{ElberfeldJT11}, and 
the same holds for knapsack (see \cref{thm-TC0}). Related results can be found in 
\cite{Jen95}.

In \cite{MyNiUs14} the authors encode elements of the finitely generated group $G$ by words over the group generators
and their inverses, which corresponds to the unary encoding of integers.
Another, more succinct encoding of group elements uses \emph{straight-line programs} (SLP). These are context-free
grammars that produce a single word. Over a unary alphabet, one can achieve for every word exponential compression with SLPs:
The word $a^n$ can be produced by an SLP of size $O(\log n)$. This shows that knapsack and subset sum for the group $\mathbb{Z}$ with SLP-compressed
group elements correspond to the classical knapsack and subset sum problem with binary encoded numbers.
To distinguish between the two variants, we will speak in this introduction of uncompressed knapsack (resp., subset sum) if the input group
elements are given explicitly by words over the generators. On the other hand, if these words are represented by SLPs, we
will speak of SLP-compressed knapsack (resp., subset sum). Later in this paper, we will only use the uncompressed versions,
and denote these simply with knapsack and subset sum.

In our recent paper \cite{LohreyZ16}, we started to investigate knapsack and subset sum for graph groups, which are also known
as right-angled Artin groups in group theory. A graph group is specified by a finite simple graph $\Gamma$ and denoted with
$\dG(\Gamma)$. The vertices are the generators of the group, and two generators
$a$ and $b$ are allowed to commute if and only if $a$ and $b$ are adjacent in $\Gamma$. Graph groups interpolate between 
free groups and free abelian groups and can be seen as a group counterpart of trace monoids (free partially commutative
monoids), which have been used for the specification of concurrent behavior. In combinatorial group theory, graph groups
are currently an active area of research, mainly because of their rich subgroup structure, see e.g. \cite{BesBr97,CrWi04,GhPe07}.    

\subsection*{Contributions.}
In \cite{LohreyZ16} the authors proved that for every graph group, SLP-com\-pressed knapsack (resp., subset sum) is
{\sf NP}-complete. This result generalizes the classical result for knapsack with binary encoded integers.
Moreover, we proved that uncompressed knapsack and subset sum are {\sf NP}-complete for the group 
$F_2 \times F_2$ ($F_2$ is the free group on two
generators). The group $F_2 \times F_2$ is the graph group $\dG(\Gamma)$, where the graph $\Gamma$ is a cycle on four nodes.
This result leaves open the complexity of uncompressed knapsack (and subset sum) 
for graph groups, whose underlying graph does not contain an induced cycle on four nodes. In this paper, we (almost) completely
settle this open problem by showing the following results:
\begin{enumerate}[label=(\roman*)]
\item \label{i-TC0} Uncompressed knapsack and subset sum for $\dG(\Gamma)$ are complete for $\mathsf{TC}^0$ if $\Gamma$ is a complete graph (and thus
$\dG(\Gamma)$ is a free abelian group).\footnote{In the following, $\mathsf{TC}^0$ always refers to its {\sf DLOGTIME}-uniform version.}
\item \label{ii-logcfl} Uncompressed knapsack and subset sum for $\dG(\Gamma)$ are $\mathsf{LogCFL}$-complete if $\Gamma$ is not a complete graph and 
neither contains an induced cycle on four nodes ($\mathsf{C4}$) nor an induced path on four nodes ($\mathsf{P4}$).
\item \label{iii-NP} Uncompressed knapsack for $\dG(\Gamma)$ is $\mathsf{NP}$-complete if $\Gamma$ 
contains an induced $\mathsf{C4}$ or an induced $\mathsf{P4}$ (it is not clear whether this also holds for subset sum).
\end{enumerate}
\subsection*{Overview of the proofs.}
The result \labelcref{i-TC0} is a straightforward extension of the corresponding
result for $\mathbb{Z}$ \cite{ElberfeldJT11}.  The statements in
\labelcref{ii-logcfl} and  \labelcref{iii-NP} are less obvious. Recall that
$\mathsf{LogCFL}$ is the closure of the context-free languages under logspace
reductions; it is contained in $\mathsf{NC}^2$. 

To show the upper bound in \labelcref{ii-logcfl}, we use the fact that the graph
groups $\dG(\Gamma)$, where $\Gamma$ neither contains an induced $\mathsf{C4}$ nor
an induced $\mathsf{P4}$ (these graphs are the so called transitive forests),
are exactly those groups that can be built up from $\mathbb{Z}$ using the
operations of free product and direct product with $\mathbb{Z}$. We then
construct inductively over these operations a logspace-bounded auxiliary
pushdown automaton working in polynomial time (these machines accept exactly
the languages in $\mathsf{LogCFL}$) that checks whether an acyclic finite
automaton accepts a word that is trivial in the graph group. In order to apply
this result to knapsack, we finally show that every solvable knapsack instance
over a graph group $\dG(\Gamma)$ with $\Gamma$ a transitive forest has a
solution with polynomially bounded exponents. This is the most difficult result in this 
paper and it might be of
independent interest.

For the lower bound in  \labelcref{ii-logcfl}, it suffices to consider the
group $F_2$ (the free group on two generators). Our proof is based on the fact
that the context-free languages are exactly those languages that can be
accepted by valence automata over $F_2$. This is a reinterpretation of the
classical theorem of Chomsky and Sch\"utzenberger. To the authors' knowledge,
the result~\labelcref{ii-logcfl} is the first completeness result for
$\mathsf{LogCFL}$ in the area of combinatorial group theory.

Finally, for the result \labelcref{iii-NP} it suffices to show
$\mathsf{NP}$-hardness of knapsack for the graph group $\dG(\mathsf{P4})$ (the
$\mathsf{NP}$ upper bound and the lower bound for $\mathsf{C4}$ is shown in
\cite{LohreyZ16}). We apply a technique that was first used in a paper by
Aalbersberg and Hoogeboom \cite{AaHo89} to show that the intersection
non-emptiness problem for regular trace languages is undecidable for
$\mathsf{P4}$.

\subsection*{Related work.}

Apart from the above mentioned papers \cite{LohreyZ16,MyNiUs14}, also the papers 
\cite{FrenkelNU15,Godin16,KoeLoZe16} deal with the knapsack and subset sum problem for groups.
In \cite{FrenkelNU15} the authors study the effect of free and direct product constructions on the complexity
of the subset sum and knapsack problem. In \cite{KoeLoZe16} the authors proved that knapsack is undecidable
for the direct product of sufficiently many copies of the discrete Heisenberg group (this direct product is nilpotent
of class 2). On the other hand, knapsack is shown to be decidable for the discrete Heisenberg group, which implies
that decidability of knapsack is not preserved by direct products. Moreover, it is shown in \cite{KoeLoZe16} that
knapsack is decidable for every co-context-free group and that there exist polycyclic groups with an {\sf NP}-complete
subset sum problem.
Using the results of  \cite{KoeLoZe16}, Godin proved in \cite{Godin16} that there are automaton groups
with an undecidable knapsack problem. On the other hand for so called bounded automaton groups,
Godin proved decidability of the knapsack problem. The class of bounded automaton groups includes
important groups like for instance Grigorchuk's group, Gupta-Sidki groups, and the Basilica group.

\section{Knapsack and Exponent Equations}
 
 We assume that the reader has some basic knowledge concerning (finitely generated) groups; see e.g. \cite{LySch77}
 for further details. Let $G$ be a finitely generated group, and let $A$ be a finite generating set for $G$. Then,
 elements of $G$ can be represented by finite words over the alphabet $A^{\pm 1} = A \cup A^{-1}$.
An {\em exponent equation} over $G$ is an equation of the
form $$h_0 g_1^{\x_1} h_1 g_2^{\x_2}  h_2 \cdots g_k^{\x_k} h_k = 1$$ 
where
$g_1, g_2, \ldots, g_k, h_0, h_1, \ldots, h_k \in G$ are group elements that
are given by finite words over the alphabet $A^{\pm 1}$
and $\x_1, \x_2, \ldots, \x_k$
are not necessarily distinct variables. Such an exponent equation is \emph{solvable}
if there exists a mapping $\sigma \colon \{\x_1, \ldots,\x_k\} \to \mathbb{N}$ such that
$h_0 g_1^{\sigma(\x_1)} h_1 g_1^{\sigma(\x_2)}  h_2 \cdots g_k^{\sigma(\x_k)} h_k = 1$
in the group $G$. 
The \emph{size} of an
equation is $\sum_{i=0}^k |h_i|+\sum_{i=1}^k |g_i|$, where $|g|$
denotes the length of a shortest word $w\in (A^{\pm 1})^*$ representing $g$.
{\em Solvability of  exponent equations over $G$} is the following computational problem:

\smallskip
\noindent
{\bf Input:} An exponent equation $E$ over $G$ (where elements of $G$ are specified by words over the 
group generators and their inverses).

\smallskip
\noindent
{\bf Question:} Is $E$ solvable?

\smallskip
\noindent
The {\em knapsack problem} for the group $G$ is the restriction of solvability
of  exponent equations over $G$ to exponent equations of the form $g_1^{\x_1}
g_2^{\x_2} \cdots g_k^{\x_k} g^{-1} = 1$ or, equivalently, $g_1^{\x_1}
g_2^{\x_2} \cdots g_k^{\x_k} = g$ where the exponent variables $\x_1, \ldots,
\x_k$ have to be pairwise different. The {\em subset sum problem} for the group
$G$ is defined in the same way as the knapsack problem, but the exponent
variables $\x_1, \ldots, \x_k$ have to take values in $\{0,1\}$. 

It is a simple observation  that the decidability and complexity of solvability of  exponent equations over $G$ as well as  
the knapsack problem and subset sum problem for $G$ does not depend on the chosen finite generating set for the group $G$.  
Therefore, we do not have to mention the generating set explicitly in these problems. 

\begin{rem} \label{remark}
Since we are dealing with a group, one might also allow solution mappings $\sigma \colon  \{\x_1, \ldots,\x_k\} \to \mathbb{Z}$
to the integers. This variant of solvability of exponent equations (knapsack, respectively) can be reduced to the above
version, where $\sigma$ maps to $\mathbb{N}$, by simply replacing a power $g_i^{\x_i}$ by $g_i^{\x_i} (g^{-1}_i)^{\y_i}$, where
$\y_i$ is a fresh variable.
\end{rem}
 
\section{Traces and Graph Groups}

Let $(A,I)$ be a finite simple graph. In other words, the edge relation $I \subseteq A \times A$ is 
irreflexive and symmetric. It is also called the {\em independence relation}, and $(A,I)$ is called
an {\em independence alphabet}. Symbols $a,b \in A$ are called dependent if $(a,b) \not\in I$.
We consider the monoid $\dM(A, I) = A^*/\!\!\equiv_I$,
where $\equiv_I$ is the smallest congruence relation on the free monoid $A^*$ that contains all pairs
$(ab, ba)$ with $a,b \in A$ and $(a,b) \in I$. This monoid is called a {\em trace monoid} or {\em partially
commutative free monoid}. Elements of $\dM(A, I)$ are called 
{\em Mazurkiewicz traces} or simply {\em traces}. The trace represented by the word $u$ is denoted
by $[u]_I$, or simply $[u]$ if no confusion can arise. 
The empty trace $[\varepsilon]_I$ is the identity element of the monoid $\dM(A,I)$ 
and is denoted by $1$.
For a language $L \subseteq A^*$
we denote with $[L]_I = \{ [u]_I \in \dM(A,I) \mid u \in L \}$ the set of traces represented by $L$.  

Figure~\ref{fig:P4-C4} shows two important
indepedence alphabets that we denote with $\mathsf{P4}$ (path on four nodes) and $\mathsf{C4}$ (cycle on four nodes).
Note that $\dM(\mathsf{C4}) = \{a,c\}^* \times \{b,d\}^*$.

\begin{figure}[t]	
	\centering
	\tikzstyle{every node}=[minimum size=15pt]
	\begin{tikzpicture}
			
	\node[gate] (a) {$a$};
	\node[gate, right = 20pt of a] (b) {$b$};
	\node[gate, right = 20pt of b] (c) {$c$};
	\node[gate, right = 20pt of c] (d) {$d$};
	
	\draw[arrow,-] (a) -- (b);
	\draw[arrow,-] (b) -- (c);
	\draw[arrow,-] (c) -- (d);
	
	\node[gate, below right = 10 pt and 40pt of d] (a') {$a$};
	\node[gate, right = 20pt of a'] (b') {$b$};
	\node[gate, above = 20pt of b'] (c') {$c$};
	\node[gate, above = 20pt of a'] (d') {$d$};
	
	\draw[arrow,-] (a') -- (b');
	\draw[arrow,-] (b') -- (c');
	\draw[arrow,-] (c') -- (d');
        \draw[arrow,-] (d') -- (a');
	
	\end{tikzpicture}
	\caption{$\mathsf{P4}$ and $\mathsf{C4}$}
	\label{fig:P4-C4}
\end{figure}

With an independence alphabet $(A,I)$ we associate the finitely presented group 
\begin{equation*}
\dG(A,I) = \langle A \mid ab=ba \ ( (a,b) \in I) \rangle. 
\end{equation*}
More explicitly, this group can be defined as follows: 
Let $A^{-1} = \{ a^{-1} \mid a \in A\}$ be a disjoint copy of the alphabet $A$.
We extend the independence relation $I$ to $A^{\pm 1} = A \cup A^{-1}$ by
$(a^x, b^y) \in I$ for all $(a,b) \in I$ and $x,y \in \{-1,1\}$. 
Then 
$\dG(A,I)$ is the quotient monoid $(A^{\pm 1})^*/\sim_I$, where $\sim_I$ is the 
smallest congruence relation that contains (i) all pairs $(ab,ba)$ for 
$a,b \in A^{\pm 1}$ with $(a,b) \in I$ and (ii) all pairs $(aa^{-1}, \varepsilon)$ and
$(a^{-1}a, \varepsilon)$ for $a \in A$. Because of (ii), $\dG(A,I)$ is a group.

A group $\dG(A,I)$ is called a {\em graph group}, or
{\em right-angled Artin group}\footnote{This term comes from the fact that right-angled Artin groups
are exactly  the Artin groups corresponding to right-angled Coxeter groups.}, or {\em free partially commutative group}.
Here, we use the term graph group.
Graph groups received a lot of attention in group theory during the last years, mainly
due to their rich subgroup structure \cite{BesBr97,CrWi04,GhPe07}, and their relationship to low dimensional
topology (via so-called virtually special groups) \cite{Agol12,HagWi10,Wis09}.

In this paper we will classify graph groups with respect to the complexity of knapsack.
It turns out that for every graph group, knapsack is complete for one of the following three
complexity classes: $\TCZero$, $\LogCFL$, and $\NP$.
We assume that the reader has some basic knowledge in complexity theory, in particular
concerning the class $\NP$. In this next section, we briefly recall the definitions of the other 
two classes, $\TCZero$ and $\LogCFL$; further details can be found in \cite{Vol99}.

\section{$\TCZero$ and $\LogCFL$}. 

The class $\TCZero$ is a very low circuit complexity class; it is contained for instance in $\mathsf{NC}^1$ and deterministic
logspace. We will use this class only in Section~\ref{sec-tc0-completeness}, and even that part can be understood without
the precise definition of $\TCZero$. Nevertheless, for completeness we include the formal definition of $\TCZero$.

A language $L \subseteq \{0,1\}^*$ 
belongs to $\TCZero$ if there exists a family $(C_n)_{n \geq 0}$ of Boolean circuits with the following 
properties:
\begin{itemize}
\item $C_n$ has $n$ distinguished input gates $x_1, \ldots, x_n$ and a distinguished output gate $o$.
\item $C_n$ accepts exactly the words from $L \cap \{0,1\}^n$, i.e., if the input gate $x_i$ receives the input $a_i \in \{0,1\}$, then
the output gate $o$ evaluates to $1$ if and only if $a_1 a_2 \cdots a_n \in L$.
\item Every circuit $C_n$ is built up from input gates, and-gates, or-gates, and majority-gates, where a majority gate evaluates to $1$
if more than half of its input wires carry $1$.
\item  All gates have unbounded fan-in, which means that there is no bound on the number of input wires for a gate.
\item There is a polynomial $p(n)$ such that $C_n$ has at most $p(n)$ many gates.
\item There is a constant $c$ such that every $C_n$ has depth at most $c$, where the depth is the length of a longest path
from an input gate $x_i$ to the output gate $o$.
\end{itemize}
This is in fact the definition of non-uniform $\TCZero$. Here ``non-uniform'' means that the mapping $n \mapsto C_n$ is 
not restricted in any way. In particular, it can be non-computable. For algorithmic purposes one usually adds some uniformity
requirement to the above definition. The most ``uniform'' version of $\TCZero$ is $\DLOGTIME$-uniform $\TCZero$. For this,
one encodes the gates of each circuit $C_n$ by bit strings of length $O(\log n)$. Then the circuit family $(C_n)_{n \geq 0}$
is called \emph{$\DLOGTIME$-uniform}  if (i) there exists a deterministic Turing machine that computes for a given gate $u \in \{0,1\}^*$ 
of $C_n$ ($|u| \in O(\log n)$) in time $O(\log n)$ the type (of gate $u$, where the types are $x_1, \ldots, x_n$, and, or, majority)
and (ii) there exists a deterministic Turing machine that decides for two given gate $u,v \in \{0,1\}^*$ 
of $C_n$ ($|u|, |v| \in O(\log n)$) in time $O(\log n)$ whether there is a wire from gate $u$ to gate $v$.
In the following, we always implicitly refer to $\DLOGTIME$-uniform $\TCZero$.

If the language $L$ in the above definition of $\TCZero$ is defined over a non-binary alphabet $\Sigma$ then one first has to fix a binary 
encoding of words over $\Sigma$. When talking about hardness for $\TCZero$, one has to use reductions, whose computational power
are below $\TCZero$, e.g. $\mathsf{AC}^0$-reductions. The precise definition of these reductions is not important for our purpose.
Important problems that are complete for $\TCZero$ are:
\begin{itemize}
\item The languages $\{ w \in \{0,1\}^* \mid |w|_0 \leq |w|_1 \}$ and $\{ w \in \{0,1\}^* \mid |w|_0 = |w|_1 \}$, where $|w|_a$ denotes
the number of occurrences of $a$ in $w$, see e.g. \cite{Vol99}.
\item The computation (of a certain bit) of the binary representation of the product of two (or any number of) 
binary encoded integers \cite{HeAlBa02}.
\item The computation (of a certain bit) of the binary representation of the integer quotient of two binary encoded integers \cite{HeAlBa02}.
\item The word problem for every infinite solvable linear group \cite{KoeLo15cocoon}.
\item The conjugacy problem for the Baumslag-Solitar group  $\mathsf{BS}(1,2)$  \cite{DiekertMW14}.
\end{itemize}
The class $\LogCFL$ consists of all
problems that are logspace reducible to a context-free language. 
The class $\LogCFL$ is included in the parallel complexity class
$\mathsf{NC}^2$ and has several alternative characterizations (see e.g.
\cite{Su78,Vol99}):
\begin{itemize}
\item logspace bounded alternating Turing-machines with polynomial proof tree size,
\item semi-unbounded Boolean circuits of polynomial size and logarithmic depth, and
\item logspace bounded auxiliary pushdown automata with polynomial running time.  
\end{itemize}
For our purposes, the last characterization is most suitable.
An AuxPDA (for auxiliary pushdown automaton)
is a nondeterministic pushdown automaton with a two-way input tape and 
an additional work tape. Here we only consider AuxPDA with the following two restrictions:
\begin{itemize}
\item The length of the work tape is restricted to $O(\log n)$ for an input of length $n$
(logspace bounded).
\item There is a polynomial $p(n)$, such that every computation path
of the AuxPDA on an input of length $n$ has length at most $p(n)$
(polynomially time bounded).
\end{itemize}
Whenever we speak of an AuxPDA in the following, we implicitly assume that the AuxPDA
is logspace bounded and polynomially time bounded.
Deterministic AuxPDA are defined in the obvious way.
The class of languages that are accepted by AuxPDA is exactly $\LogCFL$, whereas the class of langauges accepted by
deterministic AuxPDA is $\LogDCFL$ (the closure of the deterministic context-free languages under logspace reductions)
\cite{Su78}.

\section{$\TCZero$-completeness} \label{sec-tc0-completeness}

Let us first consider subset sum and knapsack for free abelian groups $\mathbb{Z}^m$.  Note that
$\mathbb{Z}^m$ is isomorphic to the graph group $\dG(A,I)$ where $(A,I)$ is the
complete graph on $m$ nodes. Our first result is a simple combination of known
results~\cite{ElberfeldJT11,Papadimitriou81}.

\begin{thm}\label{thm-TC0}
For every fixed $m\ge 1$,  knapsack and  subset sum for
the free abelian group $\mathbb{Z}^m$ are complete for $\TCZero$. Hence,
 knapsack and  subset sum
for $\dG(A,I)$ are complete for $\TCZero$ if $(A,I)$ is a non-empty
complete graph.
\end{thm}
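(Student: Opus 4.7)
The plan is to prove the statement for $\mathbb{Z}^m$; the graph-group case follows immediately, since for a complete graph on $m$ vertices $\dG(A,I) \cong \mathbb{Z}^m$.

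For $\TCZero$-hardness, I would reduce from the unary subset sum (resp.\ knapsack) problem over $\mathbb{Z}$, which is $\TCZero$-hard by~\cite{ElberfeldJT11}. The embedding $\mathbb{Z} \hookrightarrow \mathbb{Z}^m$ given by $n \mapsto n e_1$ is realized at the level of generator words simply by renaming the single generator of $\mathbb{Z}$ to the first standard generator of $\mathbb{Z}^m$. This is an $\mathsf{AC}^0$ transformation and preserves the answer, so the lower bound transfers.

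For the upper bound, a knapsack instance $g_1^{\x_1} \cdots g_k^{\x_k} = g$ over $\mathbb{Z}^m$ is the integer programming feasibility problem $Ax = b$, $x \in \mathbb{N}^k$, where the columns of $A \in \mathbb{Z}^{m\times k}$ are the $g_i$ and $b = g$. Because each group element is given by a word over $A^{\pm 1}$ of length at most $n$ (the input size), every entry of $A$ and $b$ has absolute value at most $n$, i.e.\ the data is unary. Papadimitriou's theorem~\cite{Papadimitriou81} then guarantees that if the ILP is feasible, it has a solution with $\|x\|_\infty \le N := n^{c_m}$ for a constant $c_m$ depending only on the fixed dimension $m$. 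I would then reduce the bounded knapsack to subset sum over $\mathbb{Z}^m$ via binary expansion: replace each power $g_i^{\x_i}$ with $0 \le \x_i \le N$ by a subset-sum choice over the $O(\log N)$ vectors $g_i, 2 g_i, 4 g_i, \ldots, 2^{\lceil \log N \rceil} g_i$, whose total unary size remains polynomial in $n$. Finally, unary subset sum over $\mathbb{Z}^m$ reduces to unary subset sum over $\mathbb{Z}$ by packing the $m$ coordinates with a common radix $M$ larger than every absolute coordinate value achievable by any partial sum; for fixed $m$ and polynomial $M$ this blow-up remains polynomial, and \cite{ElberfeldJT11} places the resulting problem in $\TCZero$.

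The main obstacle I anticipate is bookkeeping: each of the three reductions — Papadimitriou's polynomial solution bound, the binary-expansion step from knapsack to subset sum, and the radix packing of $\mathbb{Z}^m$ into $\mathbb{Z}$ — must preserve polynomial size in the \emph{unary} sense, and this is exactly where the hypothesis that $m$ is a fixed constant is used; the packing step in particular would blow up super-polynomially for non-constant $m$.
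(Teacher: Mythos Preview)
Your proposal is correct. The hardness argument is essentially the same as the paper's (the paper phrases it as the word problem for $\mathbb{Z}$, i.e.\ checking $|w|_0=|w|_1$, which is the trivial instance of subset sum with $k=0$).

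For the upper bound you take a genuinely different route. The paper, after invoking Papadimitriou's bound exactly as you do, appeals directly to the \emph{$m$-integer-linear-programming} problem of~\cite{ElberfeldJT11}: feasibility of $Ax=b$, $x\in[0,t]^n$, with $m$ constraints and unary data is shown there to lie in $\TCZero$, so no further reduction is needed. Your approach is more hands-on: you first collapse bounded knapsack to subset sum by binary expansion of the exponents, and then collapse $\mathbb{Z}^m$-subset sum to $\mathbb{Z}$-subset sum by radix packing, finally invoking only the one-dimensional result of~\cite{ElberfeldJT11}. This works, but note that the radix-packing step (computing $\sum_j a_j M^{j-1}$ in unary) already uses iterated addition/multiplication and hence $\TCZero$ rather than $\mathsf{AC}^0$ power; that is fine for the upper bound since $\TCZero$ is closed under $\TCZero$-computable preprocessing. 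The paper's argument is shorter because \cite{ElberfeldJT11} already packages the multi-dimensional case; your argument has the merit of relying only on the one-dimensional subset-sum result and making explicit where the constant-$m$ hypothesis is consumed.
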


\begin{proof}
Hardness for $\TCZero$ follows from the fact that the word problem for
$\mathbb{Z}$ is $\TCZero$-complete: this is exactly the problem of checking
whether for a given word $w \in \{0,1\}^*$, $|w|_0 = |w|_1$ holds.

Let us now show that  knapsack for $\mathbb{Z}^m$ belongs to
$\TCZero$.  Let $A = \{a_1,\ldots, a_m\}$ be the generating set for
$\mathbb{Z}^m$. Given a word $w \in (A \cup A^{-1})^*$ we can compute the
vector $(b_1, \ldots, b_m) \in \mathbb{Z}^m$ with $b_i := |w|_{a_i} -
|w|_{a_i^{-1}}$ represented in unary notation in $\TCZero$ (counting the number
of occurrences of a symbol in a string and subtraction can be done in
$\TCZero$).  Hence, we can transform in $\TCZero$ an instance of 
knapsack for $\mathbb{Z}^m$ into a system of equations $A x = b$, where $A \in
\mathbb{Z}^{m \times n}$ is an integer matrix with unary encoded entries, $b
\in \mathbb{Z}^m$ is an integer vector with unary encoded entries, and $x$ is a
vector of $n$ variables ranging over $\mathbb{N}$. Let $t = n (ma)^{2m+1}$,
where $a$ is the maximal absolute value of an entry in $(A\mid b)$.  By
\cite{Papadimitriou81} the system $Ax=b$ has a solution if and only if it has a
solution with all entries of $x$ from the interval $[0, t]$. Since $m$ is a
constant, the unary encoding of the number $t$ can be computed in $\TCZero$
(iterated multiplication can be done in $\TCZero$).  However, the question whether
the system $Ax=b$ has a solution from $[0, t]^n$ is an instance of the
$m$-integer-linear-programming problem from \cite{ElberfeldJT11}, which was
shown to be in $\TCZero$ in \cite{ElberfeldJT11}. For  subset sum
for $\mathbb{Z}^m$ one can use the same argument with $t=1$.
\end{proof}

 \section{$\LogCFL$-completeness}

In this section, we characterize those graph groups where knapsack for
$\dG(A,I)$ is $\LogCFL$-complete. 

The \emph{comparability graph} of a tree $t$ is the simple graph with the same
vertices as $t$, but has an edge between two vertices whenever one is a
descendent of the other in $t$. A graph $(A,I)$ is a \emph{transitive forest}
if it is a disjoint union of comparability graphs of trees. 
Wolk characterized transitive forests as those graphs that neither contain 
$\mathsf{P4}$ nor $\mathsf{C4}$ (see Figure~\ref{fig:P4-C4}) as an induced subgraph \cite{Wolk65}. 
The main result of this section is:

\begin{thm}\label{logcfl-completeness}
If $(A,I)$ is a transitive forest and not complete, then knapsack and subset sum for $\dG(A,I)$ are $\LogCFL$-complete.
\end{thm}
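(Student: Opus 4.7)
The plan is to handle the upper and lower bounds separately, following the roadmap sketched in the introduction.

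For the upper bound, I would first invoke Wolk's characterization \cite{Wolk65}: if $(A,I)$ is a transitive forest, then $\dG(A,I)$ admits a recursive decomposition built from $\mathbb{Z}$ using the two operations $G \mapsto G * \mathbb{Z}$ and $G \mapsto G \times \mathbb{Z}$, in correspondence with the rooted-forest structure of $(A,I)$ (a root whose subtree induces a clique gives a direct product with $\mathbb{Z}$, and the disjoint union of subforests gives a free product). The central technical step is then to prove a polynomial bound: every solvable knapsack instance $g_1^{\x_1}\cdots g_k^{\x_k}=g$ of size $n$ over such a $\dG(A,I)$ has a solution with $\x_i \le p(n)$ for some fixed polynomial $p$. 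This is the hard part, and as the introduction flags, it is the most delicate result of the paper; I would attempt it by induction on the recursive decomposition, using normal forms and cancellation theory for free products to control syllable length in the $*\mathbb{Z}$-step, and combining a projection onto the $\mathbb{Z}$-factor with the inductive bound in the $\times\mathbb{Z}$-step.

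Given the polynomial bound, a candidate solution yields a word over $A^{\pm 1}$ of length polynomial in $n$, and the set of all such candidate words is recognized by an acyclic NFA $\mathcal{N}$ of polynomial size computable in logspace from the instance. The remaining task is to build an AuxPDA $\mathcal{M}$ that decides, given an acyclic NFA over $A^{\pm 1}$, whether it accepts at least one word representing the identity in $\dG(A,I)$. I would construct $\mathcal{M}$ by induction on the decomposition of $\dG(A,I)$. In the base case $\mathbb{Z}$, $\mathcal{M}$ guesses a path through $\mathcal{N}$ and maintains on its logarithmic worktape a counter for the running exponent. For the step $G\mapsto G\times\mathbb{Z}$, $\mathcal{M}$ nondeterministically separates the letters of the new $\mathbb{Z}$-generator from the rest and verifies the two factors in parallel (a counter for the $\mathbb{Z}$-part, recursion for the $G$-part). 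For the step $G\mapsto G*\mathbb{Z}$, $\mathcal{M}$ uses the pushdown to track the alternating syllables in the free product, popping matching inverse-pairs of $\mathbb{Z}$-syllables and recursively invoking the procedure for $G$ on the intervening $G$-syllables; the polynomial time bound follows from the polynomial size of $\mathcal{N}$ and an amortised argument on pushdown operations. Subset sum is handled by the same argument, restricting guessed exponents to $\{0,1\}$ from the outset.

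For the lower bound, since $(A,I)$ is a non-complete transitive forest it contains two non-adjacent vertices $a,b$, and these generate a copy of the free group $F_2$ inside $\dG(A,I)$. Because a knapsack instance over $F_2$ is a special knapsack instance over $\dG(A,I)$ (up to a fixed logspace translation in terms of the generating set), it suffices to prove $\LogCFL$-hardness of knapsack and subset sum for $F_2$. I would invoke the reinterpretation of the Chomsky--Sch\"utzenberger theorem that the context-free languages over any alphabet $\Sigma$ are exactly those accepted by valence automata over $F_2$, that is, NFAs whose transitions carry a letter of $\Sigma$ together with an element of $F_2$, where a word is accepted iff some run reading it has total $F_2$-product $1$. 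Fix a $\LogCFL$-complete context-free language $L$ accepted by a fixed valence automaton $\mathcal{V}$ over $F_2$; given an input $w$ of length $n$, I would construct in logspace a knapsack (respectively subset sum) instance over $F_2$ in which each input position contributes a small block of group elements, one per transition of $\mathcal{V}$ compatible with $w$ at that position, raised to $\{0,1\}$-valued variables that select a single fired transition, together with auxiliary conjugating generators that force state consistency between consecutive steps. The resulting instance is solvable iff $\mathcal{V}$ has an accepting run on $w$ whose $F_2$-product is trivial, i.e., iff $w\in L$.

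The main obstacle is the polynomial bound on solutions: the induction has to simultaneously control the interaction between arbitrarily many cancelling syllables in free products and the commutations forced by direct products with $\mathbb{Z}$, and this is where I expect almost all of the technical work to lie. The $\LogCFL$ upper bound via the AuxPDA is then conceptually routine given the bound, and the lower bound reduces to the standard Chomsky--Sch\"utzenberger machinery once the subgroup $F_2$ is isolated.
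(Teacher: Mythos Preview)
Your overall architecture matches the paper's: polynomial solution bound $\Rightarrow$ reduce to acyclic-automaton membership $\Rightarrow$ AuxPDA; and for hardness, isolate $F_2$ and go through valence automata. Two points deserve correction.

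First, the inductive decomposition you state is too weak. Transitive-forest graph groups are the smallest class containing the trivial group and closed under \emph{arbitrary} free products and direct products with $\Z$; the operation $G\mapsto G*\Z$ alone does not suffice (e.g.\ $\Z^2*\Z^2$, the graph group of two disjoint edges, is not reachable that way). The paper's AuxPDA is accordingly built for a general free product $G_1*G_2$: when the input switches from $G_i$-letters to $G_{1-i}$-letters, one pushes the current state \emph{and worktape content} of $\mathcal{P}(G_i)$ onto the stack and starts $\mathcal{P}(G_{1-i})$ afresh. A crucial detail you omit is that $\mathcal{P}(G)$ must be \emph{one-way}, since the guessed path through the acyclic NFA cannot be stored and hence cannot be re-read.

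Second, and more seriously, your plan for the polynomial bound in the $\times\Z$ step has a real gap. Knowing only that every solvable instance over $G$ has \emph{some} polynomially bounded solution does not obviously transfer to $G\times\Z$: the small solution for the $G$-projection need not satisfy the linear equation coming from the $\Z$-coordinate, and conversely. The paper resolves this by proving a strictly stronger invariant, \emph{knapsack tameness}: the full solution set $S\subseteq\N^k$ is semilinear of magnitude at most $p(n)$. With this in hand, the $\times\Z$ step becomes intersecting $S$ with a single hyperplane $u^Tx=b$ (with $\|u\|,|b|\le n$), and a Pottier-type bound on minimal solutions of linear Diophantine equations shows this intersection again has polynomial magnitude. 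The free-product step likewise needs, and produces, this stronger semilinear control (via a careful analysis of cancellations and ``mixed periods''); your sketch ``control syllable length by cancellation theory'' underestimates what is required here. So the induction must carry the semilinear-magnitude statement, not merely the existence of a small solution.

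Your lower-bound sketch is essentially the paper's argument collapsed into one step; the paper routes it through LogCFL-hardness of acyclic-automaton membership over $F_2$ and then encodes an acyclic automaton as a knapsack/subset-sum instance using the standard free generators $\alpha_i=a^ib a^{-i}$ of a rank-$(n{+}2)$ free subgroup to enforce state consistency. Your ``auxiliary conjugating generators'' are exactly this device.
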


If the graph $(A,I)$ is the disjoint union of graphs $\Gamma_0$ and $\Gamma_1$,
then by definition, we have $\dG(A,I)\cong\dG(\Gamma_0)*\dG(\Gamma_1)$. If one
vertex $v$ of $(A,I)$ is adjacent to every other vertex and removing $v$ from
$(A,I)$ results in the graph $\Gamma_0$, then $\dG(A,I)\cong
\dG(\Gamma_0)\times\Z$. Therefore, we have the following \emph{inductive
characterization} of the graph groups $\dG(A,I)$ for transitive forests $(A,I)$:
It is the smallest class of groups containing the trivial group that is closed
under taking \begin{enumerate*}[label=(\roman*)]\item free products and \item
direct products with $\Z$.\end{enumerate*} 

\subsection*{Acyclic Automata}
In the proofs for the upper bound as well as  the lower bound in Theorem~\ref{logcfl-completeness}
we employ the membership problem for acyclic automata, which has already been studied in connection with
the knapsack and subset sum problem~\cite{FrenkelNU15,KoeLoZe16}.

We define a {\em finite automaton} as a tuple $\mathcal{A} = (Q,\Sigma, \Delta,
q_0, q_f)$, where $Q$ is a finite set of states, $\Sigma$ is the \emph{input
alphabet}, $q_0 \in Q$ is the \emph{initial state}, $q_f \in Q$ is the \emph{final state}, and
$\Delta \subseteq Q \times \Sigma^* \times Q$ is a finite set of \emph{transitions}.
The language accepted by
$\mathcal{A}$ is denoted with $L(\mathcal{A})$.  An {\em acyclic automaton} is
a finite automaton $\mathcal{A} = (Q,\Sigma, \Delta, q_0, q_f)$ such that the
relation $\{ (p,q) \mid \exists w \in \Sigma^* : (p,w,q) \in \Delta \}$ is
acyclic.  
For a graph group $\dG(A,I)$ the  \emph{membership problem for acyclic automata}
is the following computational problem:

\smallskip
\noindent
{\bf Input:}  An acyclic automaton $\mathcal{A}$  over the input alphabet $A \cup A^{-1}$.

\smallskip
\noindent
{\bf Question:} Is there a word $w \in L(\mathcal{A})$ such that $w=1$ in $\dG(A,I)$?

\smallskip
\noindent

In order to show the upper bound in \cref{logcfl-completeness}, we 
reduce knapsack for $\dG(A,I)$ with  $(A,I)$ a transitive forest to 
the membership problem for acyclic automata for $\dG(A,I)$ (note that for subset sum
this reduction is obvious). Then, we
apply the following \lcnamecref{thm-acyclic-logcfl}.  From work of Frenkel,
Nikolaev, and Ushakov~\cite{FrenkelNU15}, it follows that the membership problem for acyclic automata
is in $\Poly$. We strengthen this to $\LogCFL$:
\begin{prop} \label{thm-acyclic-logcfl}
If $(A,I)$ is a transitive forest, then the membership problem for acyclic automata over $\dG(A,I)$ is in $\LogCFL$.
\end{prop}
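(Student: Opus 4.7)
The plan is to induct on the recursive construction of $\dG(A,I)$ as a transitive-forest graph group. By the inductive characterization recalled above, every such group is obtained from the trivial group by finitely many applications of either (i) forming the free product with another such group, or (ii) forming the direct product with $\mathbb{Z}$. At each stage I would construct an AuxPDA solving the membership problem for acyclic automata over the current group; $\LogCFL$-membership then follows from the AuxPDA characterization of $\LogCFL$.

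For the base case $\dG=\{1\}$ the membership problem reduces to testing reachability of $q_f$ from $q_0$ in the input acyclic automaton $\cA$, which lies in $\mathsf{NL}\subseteq\LogCFL$. For the step $G = G'\times\mathbb{Z}$ with $\mathbb{Z}$-generator $z$, I would use that $z$ commutes with every generator of $G'$: a word $w$ on a path of $\cA$ is trivial in $G$ iff its projection to the $G'$-alphabet is trivial in $G'$ and the net count $|w|_z-|w|_{z^{-1}}$ is zero. The AuxPDA for $G$ simulates the AuxPDA for $G'$ on $\cA$, silently skipping past each $z^{\pm 1}$-labeled letter, while maintaining on its $O(\log n)$-size work tape a counter for the running net number of $z$-symbols seen along the same nondeterministically traced path; acceptance requires both the $G'$-subcomputation to accept and the counter to be zero at the end.

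The main case is the free product $G = G_1*G_2$. I would use the following Dyck-like nested characterization: $w=1$ in $G_1*G_2$ iff $w$ admits a balanced parenthesization whose matched pairs are labeled by indices in $\{1,2\}$, such that for every pair labeled $i$ the letters lying inside it but outside its child pairs are all in $A_i^{\pm 1}$ and form a word trivial in $G_i$. This characterization is obtained from the iterated reduction ``delete a contiguous $A_i^{\pm 1}$-substring trivial in $G_i$, repeat'', whose deletion-nesting is LIFO. Accordingly, the AuxPDA for $G$ nondeterministically walks a path in $\cA$ and in parallel guesses such a parenthesization, pushing a new frame with marker $i$ whenever it opens an $i$-pair, feeding each subsequent $A_i^{\pm 1}$-letter to the $G_i$-AuxPDA whose pushdown content sits above that marker, and popping when the pair closes (which requires the $G_i$-subcomputation to be in an accepting state). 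Logspace counters arising from earlier $\times\mathbb{Z}$-steps in the inductive construction are snapshotted onto the pushdown when a deeper frame opens and restored when it closes.

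The main obstacle is precisely the free-product step: one must verify that the outer syllable nesting and the pushdowns of the recursively built sub-AuxPDAs for $G_1,G_2$ fit together on a single pushdown within $\LogCFL$ resource bounds. The LIFO structure of the nested intervals is what makes this work, since at any moment exactly one sub-AuxPDA is ``active'' and its pushdown segment sits atop the frozen segments below it; each saved counter has $O(\log n)$ bits and the nesting depth is polynomial, so pushdown length and running time stay polynomial. One must also verify that the total number of letters appearing along a chosen path of $\cA$ is polynomially bounded, which is immediate from acyclicity, so that the AuxPDA indeed runs in polynomial time.
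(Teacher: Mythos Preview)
Your approach is correct and is essentially the same as the paper's: induct on the recursive build-up of $\dG(A,I)$, handle $\times\Z$ with a logspace counter, handle free products by using the pushdown to manage the nested syllable structure (saving and restoring the worktape/state of the inner machine), and exploit acyclicity of $\cA$ for the polynomial time bound.

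The one organizational difference worth noting is that the paper factors the argument through an auxiliary lemma: it first inductively constructs, for each such $G$, a \emph{deterministic one-way} AuxPDA $\mathcal P(G)$ that reads a word over $A^{\pm 1}$ left-to-right and recognizes whether it represents $1$ in $G$; only afterwards does it combine $\mathcal P(G)$ with a nondeterministic walk through the acyclic automaton $\cA$. This makes the inductive hypothesis cleaner than yours: your induction is phrased over AuxPDAs for the \emph{membership problem}, yet in the free-product step you ``feed letters'' to the $G_i$-AuxPDA as if it were a word-problem machine, so you are implicitly maintaining the stronger invariant that your machines have the shape ``guess a path in $\cA$ and process the resulting letters one-way''. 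That is exactly what the paper's factoring makes explicit. A side benefit of the paper's version is that $\mathcal P(G)$ is deterministic (it detects syllable boundaries and triviality of a syllable rather than guessing a parenthesization), so all the nondeterminism lives in the final path-guessing step; your nondeterministic guessing of the Dyck structure is equally valid for $\LogCFL$ but slightly less tidy.
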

The proof of \cref{thm-acyclic-logcfl} uses the following lemma:

\begin{lem} \label{lemma-auxpda}
For every transitive forest $(A,I)$ with the associated graph group $G = \mathbb{G}(A,I)$
there is a deterministic AuxPDA $\mathcal{P}(G)$ with input alphabet $A^{\pm 1}$ and the following properties:
\begin{itemize}
\item In each step, the input head for $\mathcal{P}(G)$ either does not move, or moves one step to the right.
\item If the input word is equal to $1$ in $G$, then $\mathcal{P}(G)$ terminates  in the distinguished
state $q_1$ with empty stack. Let us call this state the $1$-state of $\mathcal{P}(G)$.
\item  If the input word is not equal to $1$ in $G$, then $\mathcal{P}(G)$ terminates  
in a state different from $q_1$ (and the stack is not necessarily empty). 
\end{itemize}
\end{lem}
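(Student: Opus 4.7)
The plan is to induct on the inductive characterization of transitive-forest graph groups given just above the lemma: start from the trivial group and close under free products and under $\times\Z$. To make the induction go through, I would actually prove the stronger invariant that, after reading any prefix $u$ of the input, $\mathcal{P}(G)$ is in state $q_1$ with empty pushdown iff $u=1$ in $G$. The lemma's termination clause is then just the special case where $u$ is the whole input.

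For the trivial group the input alphabet is empty and a one-state AuxPDA trivially works. For a direct product $G=G_0\times\Z$ with new generator $z$, the AuxPDA $\mathcal{P}(G)$ simulates $\mathcal{P}(G_0)$ on letters from $A_0^{\pm 1}$ while keeping $\mathcal{P}(G_0)$ idle on $z^{\pm 1}$, and in parallel maintains on the work tape a signed binary counter holding $|u|_z-|u|_{z^{-1}}$, which fits in $O(\log n)$ bits. The composite is declared to be in $q_1$ with empty pushdown precisely when $\mathcal{P}(G_0)$ is and the counter equals $0$; by the inductive invariant this happens iff the $A_0$-projection of $u$ equals $1$ in $G_0$ and the $z$-exponents sum to $0$, i.e.\ iff $u=1$ in $G_0\times\Z$.

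The free product case $G=G_0*G_1$ is the main obstacle. On the pushdown I would maintain a stack of \emph{syllable frames} corresponding to the currently uncancelled maximal syllables of the prefix read so far (which necessarily alternate between the two factors). For the topmost syllable, of factor $G_i$ say, the machine actively simulates $\mathcal{P}(G_i)$: its state lives in the finite control, its work tape on the real work tape, and its pushdown contents above the topmost frame marker. When a letter $a\in A_j^{\pm 1}$ is read, if $j=i$ it is fed to the active inner simulation; if $j\neq i$, a fresh frame marker is pushed that encodes the suspended inner AuxPDA's current state and work-tape snapshot, the work tape is cleared, and a new inner simulation for $G_j$ is started and fed $a$. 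After each fed letter, if the active inner AuxPDA has reached its $q_1$-state with its portion of the pushdown empty (so the frame marker is exposed), then that syllable is trivial in its factor; I pop the marker, restore from it the saved state and work tape of the frame below, and continue. Correctness of the prefix invariant reduces to the standard normal-form fact that a word is trivial in $G_0*G_1$ iff iterated deletion of maximal syllables that are trivial in their factor collapses it to $\varepsilon$; the inductive strengthening is exactly what lets us detect those deletions online and deterministically from a single left-to-right pass.

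Finally I would verify the resource bounds. Each input symbol triggers only polynomially many inner steps (inductively), so $\mathcal{P}(G)$ runs in polynomial time, and the head never moves left because neither do the inner heads and switching frames does not consume input. The work tape only ever holds the state of the active inner AuxPDA and so fits in $O(\log n)$. The pushdown, whose size is unrestricted, accommodates both the nested inner pushdowns and the $O(\log n)$-sized snapshots stored in the frame markers, of which there are at most $O(n)$ at any moment. Determinism is manifestly preserved at every step, and by the prefix invariant the terminal configuration is $q_1$ with empty pushdown iff the input equals $1$ in $G$, giving the lemma.
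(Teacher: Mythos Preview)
Your proposal is correct and follows essentially the same inductive construction as the paper: the $\times\Z$ case is handled by a binary counter on the work tape, and the free-product case by alternating inner simulations while saving the suspended machine's state and work-tape contents on the pushdown. The only (cosmetic) differences are that you make explicit the prefix invariant the paper tacitly relies on, and that you test the active inner machine for triviality after every letter rather than only at syllable boundaries; both variations yield the same machine behavior and the same resource bounds.
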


\begin{proof}
We construct the AuxPDA $\mathcal{P}(G)$ by induction over the structure of the group $G$. For this, we consider
the three cases that $G = 1$, $G = G_1 * G_2$, and $G  = \mathbb{Z} \times G'$. The case that 
$G = 1$ is of course trivial.

\medskip
\noindent
{\em Case} $G =  \mathbb{Z} \times G'$. We have already constructed the AuxPDA $\mathcal{P}(G')$.
The AuxPDA $\mathcal{P}(G)$ simulates the AuxPDA $\mathcal{P}(G')$
on the generators of $G'$. Moreover, it stores the current value of the $\mathbb{Z}$-component in binary notation
on the work tape. If the input word has length $n$, then $O(\log n)$ bits are sufficient for this. At the end,
$\mathcal{P}(G)$ goes into its $1$-state if and only if  $\mathcal{P}(G')$ is in its $1$-state
(which implies that the stack will be empty) and the $\mathbb{Z}$-component is zero.

\medskip
\noindent
{\em Case}  $G = G_1 * G_2$. For $i \in \{1,2\}$,
we have already constructed the AuxPDA $\mathcal{P}_i = \mathcal{P}(G_i)$. Let
$A_i^{\pm 1}$ be its input alphabet, which is a monoid generating set for $G_i$.
Consider now an input word $w \in (A_1^{\pm 1} \cup A_2^{\pm 1})^*$.
Let us assume that $w = u_1 v_1 u_2 v_2 \cdots u_k v_k$ with $u_i \in (A_1^{\pm 1})^+$ and $v_i \in (A_2^{\pm 1})^+$ (other cases
can be treated analogously). The AuxPDA $\mathcal{P}(G)$ starts with empty stack and simulates the AuxPDA 
$\mathcal{P}_1$ on the prefix $u_1$. If it turns out that $u_1 = 1$ in $G_1$ (which means that $\mathcal{P}_1$ is 
in its $1$-state) then the stack will be empty and the AuxPDA $\mathcal{P}(G)$
continues with simulating $\mathcal{P}_2$ on $v_1$. On the other hand, if $u_1 \neq 1$ in $G_1$, then $\mathcal{P}(G)$
pushes the state together with the work tape content of $\mathcal{P}_1$ reached after reading $u_1$ on the stack (on top of 
the final stack content of $\mathcal{P}_1$). This allows  $\mathcal{P}(G)$ to resume the computation of $\mathcal{P}_1$ later. Then $\mathcal{P}(G)$
continues with simulating $\mathcal{P}_2$ on $v_1$. 

The computation of $\mathcal{P}(G)$ will continue in this way. More precisely, if after reading $u_i$ (resp. $v_i$ with $i < k$) the 
AuxPDA $\mathcal{P}_1$ (resp. $\mathcal{P}_2$) is in its $1$-state then either
\begin{enumerate}[label=(\roman*)]
\item the stack is empty or 
\item the top part of the stack is of the form $s q t$ ($t$ is the top), where $s$ is a stack content of $\mathcal{P}_2$
(resp. $\mathcal{P}_1$), $q$ is a state of $\mathcal{P}_2$
(resp. $\mathcal{P}_1$) and $t$ is a work tape content of $\mathcal{P}_2$
(resp. $\mathcal{P}_1$). 
\end{enumerate}
In case (i), $\mathcal{P}(G)$ continues with the simulation of $\mathcal{P}_2$
(resp. $\mathcal{P}_1$) on the word $v_i$ (resp. $u_{i+1}$) in the initial configuration.
In case (ii), $\mathcal{P}(G)$ continues with the simulation of $\mathcal{P}_2$
(resp. $\mathcal{P}_1$) on the word $v_i$ (resp. $u_{i+1}$), where the simulation is started with 
stack content $s$, state $q$, and work tape content $t$. On the other hand, if after reading $u_i$ (resp. $v_i$ with $i < k$) the 
AuxPDA $\mathcal{P}_1$ (resp. $\mathcal{P}_2$) is not in its $1$-state then $\mathcal{P}(G)$ pushes on the stack the state
and work tape content of $\mathcal{P}_1$ reached after its simulation on $u_i$. 
This concludes the description of the AuxPDA $\mathcal{P}(G)$. It is clear that $\mathcal{P}(G)$ has the properties
stated in the lemma.
\end{proof}
\noindent
We can now prove \cref{thm-acyclic-logcfl}:

\begin{proof}[Proof of \cref{thm-acyclic-logcfl}.]
 Fix the graph group $G = \mathbb{G}(A,I)$, 
where $(A,I)$ is a transitive forest. An AuxPDA for the membership problem for acyclic automata
guesses a path in the input automaton $\mathcal{A}$ and thereby simulates the AuxPDA $\mathcal{P}(G)$ from Lemma~\ref{lemma-auxpda}.
If the final state of the input automaton $\mathcal{A}$ is reached while the AuxPDA $\mathcal{P}(G)$ is in the accepting state $q_1$,
then the overall AuxPDA accepts. It is important that the AuxPDA $\mathcal{P}(G)$ works one-way since 
the guessed path in $\mathcal{A}$ cannot be stored in logspace. This implies that the AuxPDA cannot reaccess 
the input symbols that already have been processed.  Also note that the AuxPDA 
is logspace bounded and polynomially time bounded since $\mathcal{A}$
is acyclic.
\end{proof}

\subsection{Bounds on knapsack solutions}

As mentioned above, we reduce for graph groups $\dG(A,I)$ with $(A,I)$ a transitive forest
the knapsack problem to the membership problem for acyclic automata.
To this end, we show that every positive knapsack instance has a
polynomially bounded solution. The latter is the most involved proof in our
paper.  

Frenkel, Nikolaev, and Ushakov~\cite{FrenkelNU15} call groups with this
property \emph{polynomially bounded knapsack groups} and show that this class
is closed under taking free products. However, it is not clear if direct
products with $\Z$ also inherit this property and we leave this question open.

Hence, we are looking for a property that yields polynomial-size solutions and
is passed on to free products and to direct products with $\Z$. It is known
that the solution sets are always semilinear.  If $(A,I)$ is a transitive
forest, this follows from a more general semilinearity property of rational
sets~\cite{LohSte08} and for arbitrary graph groups, this was shown in
\cite{LohreyZ16}. Note that it is not true that the solution sets alwas have
polynomial-size semilinear representations. This already fails in the case of $\Z$:
The equation $\x_1+\cdots+\x_{k}=k$ has $\binom{2k-1}{k}\ge 2^k$ solutions.
We will show here that the solution sets have semilinear representations
where every occuring number is bounded by a polynomial.

For a vector $x=(x_1,\ldots,x_k)\in\Z^k$, we define the norm 
$$\|x\|=\max \{|x_i| \mid i\in[1,k]\}.$$
For a subset $T\subseteq\N^k$, we write $T^\oplus$ for
the smallest subset of $\N^k$ that contains $0$ and is closed under addition.
A subset $S\subseteq \N^k$ is called \emph{linear} if there is a vector
$x\in\N^k$ and a finite set $F\subseteq\N^k$ such that $S=x+F^\oplus$.  Note
that a set is linear if and only if it can be written as $x+A\N^t$ for some
$x\in\N^k$ and some matrix $A\in\N^{k\times t}$. Here, $A\N^t$ denotes the set
of all vectors $Ay$ for $y\in\N^t$.  A \emph{semilinear set} is a finite union
of linear sets. If $S=\bigcup_{i=1}^n x_i+F_i^\oplus$ for
$x_1,\ldots,x_n\in\N^k$ and finite sets $F_1,\ldots,F_n\subseteq\N^k$, then the
tuple $(x_1,F_1,\ldots,x_n,F_n)$ is a \emph{semilinear representation} of $S$
and the \emph{magnitude} of this representation is defined as the maximum of
$\|y\|$, where $y$ ranges over all elements of $\bigcup_{i=1}^n \{x_i\}\cup
F_i$.  The \emph{magnitude} of a semilinear set $S$ is the smallest magnitude
of a semilinear representation for $S$. 

\begin{defin}
A group $G$ is called \emph{knapsack tame} if there is a polynomial $p$ such
that for every exponent equation $h_0 g_1^{\x_1}h_1g_2^{\x_2}h_2\cdots
g_n^{\x_k}h_k=1$ of size $n$ with pairwise distinct variables
$\x_1,\ldots,\x_k$, the set $S\subseteq \N^k$ of solutions is semilinear
of magnitude at most $p(n)$.
\end{defin}

Observe that although the size of an exponent equation may depend on the chosen
generating set of $G$, changing the generating set increases the size only by a
constant factor. Thus, whether or not a group is knapsack tame is independent of
the chosen generating set.

\begin{thm}\label{tforest-tame}
If $(A,I)$ is a transitive forest, then $\dG(A,I)$ is knapsack tame.
\end{thm}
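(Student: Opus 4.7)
The plan is to induct on the inductive characterization of transitive forest graph groups stated before the theorem: the class is built from the trivial group by the two operations (i) direct product with $\Z$ and (ii) free product of two such groups. Since the base case of the trivial group is vacuous, we need to show that knapsack tameness is preserved by these two operations. An exponent equation of size $n$ over $G$ is assumed fixed throughout, and we must produce a semilinear representation of its solution set in $\N^k$ with magnitude bounded by a polynomial in $n$.

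For the direct product case $G = G' \times \Z$, write every constant in the equation as a pair, splitting $g_i = (g_i', a_i)$ and $h_i = (h_i', b_i)$. The equation then becomes equivalent to the conjunction of (a) an exponent equation $h_0'(g_1')^{\x_1}h_1'\cdots(g_k')^{\x_k}h_k' = 1$ in $G'$ and (b) the single linear Diophantine equation $\sum_i a_i \x_i = -\sum_i b_i$ over $\Z$. The $G'$-equation has size at most $n$, so the induction hypothesis supplies a semilinear representation $\bigcup_j(x_j+F_j^\oplus)$ of magnitude polynomial in $n$. The $\Z$-equation has coefficients of absolute value at most $n$, and intersecting each linear piece $x_j+F_j^\oplus$ with the affine hyperplane $H$ amounts to pulling $H$ back along the parametrization $\phi_j(m)=x_j+\sum_{f\in F_j}m_f f$ to obtain a single Diophantine equation in $\N^{|F_j|}$ whose coefficients are polynomially bounded. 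Classical bounds on the Hilbert basis and particular solutions of such an equation (Pottier / Papadimitriou) give a semilinear solution set of polynomial magnitude, and pushing forward along the linear map $\phi_j$ preserves polynomial magnitude, completing this case.

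The free product case $G = G_1 * G_2$ is the main obstacle. For each base element $g_i$, we pass to its standard form $g_i = u_i v_i u_i^{-1}$ with $v_i$ cyclically reduced in the free product; either $v_i$ lies in one of the factors $G_1,G_2$ (the \emph{elliptic} case) or it properly alternates between them (the \emph{hyperbolic} case). In the hyperbolic case, the normal form of $v_i^{\x_i}$ has length growing linearly in $\x_i$, and its interior letters cannot cancel against anything other than the interior of some $v_j^{\x_j}$ with $v_j$ equal to $v_i^{\pm 1}$ up to cyclic rotation. This forces any solution to respect one of only finitely many combinatorial \emph{cancellation patterns}: a matching of the hyperbolic occurrences into pairs together with equality constraints $\x_i=\x_j$ for matched pairs, together with a decomposition of the remaining ``boundary'' word into alternating blocks that must equal $1$ independently in $G_1$ and in $G_2$. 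Fixing one such pattern reduces the original equation to a finite system consisting of: linear equalities $\x_i=\x_j$, elliptic exponent equations over $G_1$, and elliptic exponent equations over $G_2$; the total size of this system is linear in $n$, and the elliptic powers can be rewritten as powers of elements of the respective factor.

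The hard part is bookkeeping the magnitudes through this reduction. For each of the finitely many (polynomially boundedly many in $n$) cancellation patterns, the induction hypothesis applied to $G_1$ and to $G_2$ yields polynomial-magnitude semilinear solution sets $S_1,S_2 \subseteq \N^k$ for the two subsystems, and the equality constraints $\x_i=\x_j$ define a coordinate subspace $L \subseteq \N^k$ of polynomial magnitude. I will show that $S_1 \cap S_2 \cap L$ is again semilinear of polynomial magnitude by iterating the intersection argument from the direct product case: each intersection step amounts to solving, on each linear piece, a bounded system of linear Diophantine equations whose coefficients are polynomially bounded, and applies the Pottier-style bound. The overall solution set is the union over the finitely many cancellation patterns, so its magnitude remains polynomial. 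The delicate point — and the chief technical task — is verifying that the number of distinct cancellation patterns, and the sizes of the reduced factor equations they produce, depend polynomially on $n$; this hinges on the fact that the positions at which hyperbolic blocks can be matched correspond to a bounded-degree combinatorial structure on the indices $1,\ldots,k$, and that the inserted ``bridge'' words $u_i^{-1}h_iu_{i+1}$ and the cyclically reduced cores $v_i$ all have length linear in $n$.
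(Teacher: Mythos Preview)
Your induction scheme and your treatment of the direct-product step $G'\times\Z$ are essentially the same as in the paper (Proposition~6.4 together with the Pottier bound in Lemmas~6.5--6.7), and that part is fine.

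The free-product step, however, has a genuine gap. Your central structural claim --- that the hyperbolic (mixed) occurrences are ``matched into pairs with equality constraints $\x_i=\x_j$'' and that the remaining boundary then splits into independent $G_1$- and $G_2$-equations --- is not correct in a free product of \emph{arbitrary} groups. Two things go wrong. First, a single hyperbolic factor $v_i^{\x_i}$ need not cancel against just one partner: its interior syllables may sit in edges of size $>2$ together with syllables coming from several different mixed cycles (in $G_0*G_1$ any collection of $G_0$-syllables whose product is $1$ in $G_0$ can form one edge). The paper's notion of \emph{cancellation} is designed exactly to accommodate this, and Lemma~6.9 only bounds the number of such non-standard edges; it does not eliminate them. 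Second, even when two mixed cycles $u_i,u_j$ do cancel pairwise, the resulting period is $\|u_j\|\,e_i+\|u_i\|\,e_j$ (depending on the syllable counts), not $e_i+e_j$; so the relation between $\x_i$ and $\x_j$ is not an equality but a congruence-type constraint. Your assertion that the partner $v_j$ must equal $v_i^{\pm1}$ up to cyclic rotation also fails for the same reason: syllable-level cancellation in the factor groups is not pairwise. Finally, you explicitly leave unproven what you yourself call ``the chief technical task'' (polynomially many patterns, polynomially bounded reduced instances); as stated, the number of well-nested matchings among $k$ hyperbolic blocks can already be a Catalan number, so this is not a bookkeeping detail.

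The paper circumvents all of this by \emph{not} enumerating cancellation patterns. Instead it introduces the set $\Periods$ of \emph{mixed periods} $\|u_j\|\,e_i+\|u_i\|\,e_j$ and proves two facts: (i) adding a compatible mixed period to a certified solution yields another solution (Lemma~6.11), and (ii) any certified solution can be reduced, by subtracting compatible mixed periods, to one whose mixed coordinates are bounded by a polynomial $q(n)$ (Lemmas~6.9, 6.10, 6.12). Thus every solution lies in some $x'+\CompPeriods(x',C')^\oplus$ with $\mixedNorm{x'}\le q(n)$ and $\|\pi\|\le n$ for all $\pi\in\Periods$. The simple (elliptic) coordinates are then handled by the tameness of the factors, and --- crucially --- the resulting semilinear pieces live on pairwise \emph{disjoint} coordinate sets, so they are combined by Minkowski sum rather than intersection, which preserves magnitude trivially. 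This is the key idea your outline is missing.
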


Note that \cref{tforest-tame} implies in particular that every solvable
exponent equation with pairwise distinct variables has a polynomially bounded solution. 
\cref{tforest-tame} and \cref{thm-acyclic-logcfl} easily yield the upper bound
in \cref{logcfl-completeness}, see \cref{tforest-logcfl-membership}.

We prove \cref{tforest-tame} by showing that knapsack tameness transfers from groups $G$
to $G\times\Z$ (\cref{tame-directz}) and from $G$ and $H$ to $G*H$
(\cref{tameness-free-preserved}). Since the trivial group is obviously knapsack
tame, the inductive characterization of groups $\dG(A,I)$ for transitive
forests $(A,I)$ immediately yields \cref{tforest-tame}.

\subsection{Tameness of direct products with $\Z$}\label{tame-directz-section}
In this \lcnamecref{tame-directz-section}, we show the following.
\begin{prop}\label{tame-directz}
If $G$ is knapsack tame, then so is $G\times\Z$.
\end{prop}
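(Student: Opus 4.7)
The plan is to decompose an exponent equation over $G\times\Z$ into its two projections and to control the magnitude of the intersection of the resulting solution sets.

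Fix a generating set of the form $A_G\cup\{t,t^{-1}\}$ for $G\times\Z$ and write each coefficient as $g_i=(a_i,c_i)$ and $h_j=(b_j,d_j)$ with $a_i,b_j\in G$ and $c_i,d_j\in\Z$. An exponent equation $h_0g_1^{\x_1}h_1\cdots g_k^{\x_k}h_k=1$ of size $n$ is equivalent to the conjunction of the $G$-equation $b_0a_1^{\x_1}b_1\cdots a_k^{\x_k}b_k=1$ (of size at most $n$) and the single linear Diophantine equation $\sum_{i=1}^k c_i\x_i=-\sum_{j=0}^k d_j$ over $\Z$, whose coefficients and right-hand side have absolute value at most $n$. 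Let $S_1,S_2\subseteq\N^k$ denote the corresponding solution sets, so $S=S_1\cap S_2$. By knapsack tameness of $G$, there is a semilinear representation $S_1=\bigcup_i(v_i+P_i^\oplus)$ of magnitude at most $p_G(n)$ for some fixed polynomial $p_G$. A vector $v_i+\sum_{p\in P_i}\lambda_p p$ belongs to $S_2$ iff $\sum_p\gamma_p\lambda_p=\delta_i$, where $\gamma_p:=\langle c,p\rangle$ and $\delta_i:=-\sum_j d_j-\langle c,v_i\rangle$ are integers of absolute value polynomial in $n$ (using that the number of variables, the $c_j$ and the entries of $v_i,p$ are all bounded polynomially in $n$).

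This is a single linear Diophantine equation in the $|P_i|$ unknowns $(\lambda_p)_{p\in P_i}$. I invoke the classical Pottier bound for one linear Diophantine equation: for $\sum_p\alpha_p\mu_p=\beta$ with $|\alpha_p|,|\beta|\le N$, the set of nonnegative integer solutions admits a representation $\bigcup_j(\mu^{(j)}+H^\oplus)$ where both the minimal particular solutions $\mu^{(j)}$ and the Hilbert basis $H$ of the homogeneous equation consist of vectors of $\ell_1$-norm at most $N+1$, a bound that is \emph{independent of the number of variables} (the usual proof augments the inhomogeneous equation with a slack variable to reduce to the homogeneous case). Applied to our equation, the $\lambda$-solution set takes the form $\Lambda_i=\bigcup_j(\lambda^{(i,j)}+H_i^\oplus)$ with $\|\lambda^{(i,j)}\|_1,\|h\|_1\le q(n)$ for some polynomial $q$ and every $h\in H_i$. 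Pushing forward along $\lambda\mapsto v_i+\sum_p\lambda_p p$, the image linear sets have base points $v_i+\sum_p\lambda^{(i,j)}_p p$ and periods $\sum_p h_p p$, each of $\ell_\infty$-norm at most $p_G(n)\cdot(1+q(n))$, which is polynomial in $n$. Taking the union over all $i$ and all $j$ yields a semilinear representation of $S$ of polynomial magnitude, so $G\times\Z$ is knapsack tame.

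The main obstacle lies in the fact that tameness of $G$ controls only the entries but neither the number of pieces in the representation of $S_1$ nor the number $|P_i|$ of periods inside a single piece; the latter can a priori be as large as $(2p_G(n)+1)^k$, i.e., exponential in $n$. The argument sidesteps this precisely by relying on the dimension-free Pottier bound for a \emph{single} linear Diophantine equation together with the observation that the magnitude of a semilinear set constrains only entry sizes and is blind to the number of pieces in the representation.
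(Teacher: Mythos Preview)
Your overall strategy coincides with the paper's: decompose the equation into its $G$- and $\Z$-projections, write the $G$-solution set as a union of linear pieces, pass to period-coordinates on each piece, and argue that the resulting single Diophantine equation has minimal solutions of $\ell_1$-norm bounded independently of the number of periods. The paper packages the last two steps as separate lemmas (intersecting a semilinear set of magnitude $M$ with a hyperplane $u^Tx=b$, and a dimension-free bound on minimal solutions of one equation), but the architecture is identical.

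The gap is in your justification of the dimension-free bound. Pottier's theorem, in the form you cite, bounds minimal solutions of $u^Tx=0$ by $1+\|u\|_1$, not by $1+\|u\|_\infty$; the $\ell_1$-norm $\sum_p|\alpha_p|$ \emph{does} scale with the number of variables $|P_i|$, which you yourself note may be exponential. Your stated bound $N+1$ with $N=\max_p|\alpha_p|$ is also numerically false: for $3x-2y=0$ the unique minimal positive solution $(2,3)$ has $\ell_1$-norm $5>4=N+1$. What you actually need is a bound polynomial in $\max_p|\alpha_p|$ and $|\beta|$ alone. Such a bound is true (e.g.\ $2\max(N,|\beta|)$ via a partial-sums pigeonhole argument on the multiset of coefficients, or the $1+(M{+}2)M$ that the paper derives), but it is not Pottier's statement and needs its own proof. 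The paper supplies precisely this missing step: it collapses all variables sharing the same coefficient into a single variable, obtaining an equation with at most $2M+1$ variables and $\|u\|_1\le M(M+1)$, and \emph{then} applies Pottier. Once you insert that lemma (or any equivalent dimension-free bound) in place of your appeal to Pottier, your proof is complete and matches the paper's.
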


\subsection*{Linear Diophantine equations} 

Vectors will be column vectors in this section. For a vector $x \in \Z^k$ we denote
with $x^T$ the corresponding row vector.
We employ a result of
Pottier~\cite{Pottier1991}, which bounds the norm of minimal non-negative
solutions to a linear Diophantine equation.  Let $A\in\Z^{k\times m}$ be an
integer matrix where $a_{ij}$ is the entry of $A$ at row $i$ and column $j$ and
let $x\in\Z^m$ be a vector with entries $x_i$. We will use the following norms:
\begin{eqnarray*}
\|A\|_{1,\infty} &=& \max_{i\in[1,k]}(\sum_{j\in[1,m]} |a_{ij}|), \\
\|A\|_{\infty,1} &=& \max_{j\in[1,m]} (\sum_{i\in[1,k]} |a_{ij}|), \\
\|A\|_{\infty} &=& \max_{i\in [1,k], j\in[1,m]} |a_{ij}|, \\
\|x\|_1 &=& \sum_{i=1}^m |x_i| .
\end{eqnarray*}
Recall that $\|x\|=\max_{i\in[1,m]} |x_i|$. A non-trivial solution $x\in\N^m\setminus\{0\}$ to
the equation $Ax=0$ is \emph{minimal} if there is no $y\in\N^m\setminus\{0\}$ with $Ay=0$ and
$y\le x$, $y\ne x$. Here $y \le x$ means that $y_i \le x_i$ for all $i \in [1,m]$.
 The set of all solutions clearly forms a submonoid of
$\N^m$. Let $r$ be the rank of $A$. 

\begin{thm}[Pottier~\cite{Pottier1991}]\label{pottier}
Each non-trival minimal solution $x \in \N^m$ to $Ax=0$  satisfies
$\|x\|_1\le (1+\|A\|_{1,\infty})^r$.
\end{thm}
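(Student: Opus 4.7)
The plan is to follow the standard Carath\'eodory--Cramer strategy for bounding elements of the Hilbert basis of the monoid $\{x \in \N^m : Ax = 0\}$, since this is a classical result on homogeneous linear Diophantine systems.

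First, I would reduce to the case in which $A$ has exactly $r$ linearly independent rows: any row that is a $\mathbb{Q}$-linear combination of the others can be discarded without changing the kernel over $\mathbb{Q}$ (and hence the set of non-negative integer solutions), and this operation does not increase $\|A\|_{1,\infty}$. So from now on I may assume $A \in \Z^{r \times m}$ has full row rank $r$.

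Second, I would show that any minimal non-trivial solution $x \in \N^m \setminus \{0\}$ has support $S = \{i : x_i > 0\}$ of size at most $r+1$, and that $x$ restricted to $S$ is determined uniquely up to a positive rational scalar. Let $A_S$ be the submatrix on the columns indexed by $S$. The key claim is that minimality of $x$ forces $\dim_{\mathbb{Q}} \ker(A_S) = 1$. Indeed, if there were two linearly independent rational vectors in $\ker(A_S)$, I could take a suitable $\mathbb{Q}$-combination $w$ of them that is non-zero but has at least one zero entry on $S$. Since $x_S$ is strictly positive on $S$, there is a rational $\lambda \ne 0$ such that $x_S + \lambda w$ is still non-negative, coincides with $x_S$ outside a proper subset of $S$, and is strictly smaller than $x_S$ in at least one coordinate. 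After clearing denominators by taking a suitable integer multiple $N$, one obtains a non-negative integer solution which is strictly dominated componentwise by $Nx$; subtracting an appropriate non-negative integer multiple of $x$ yields a non-zero non-negative integer solution strictly dominated by $x$, contradicting minimality. The dimension identity $\dim_{\mathbb{Q}} \ker(A_S) = |S| - \rank(A_S) \ge |S| - r$ then forces $|S| \le r+1$.

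Third, with $\ker(A_S)$ one-dimensional and $|S| \le r+1$, Cramer's rule expresses the entries of $x$ on $S$ (after dividing out by their gcd) as signed $r \times r$ minors of $A_S$, up to a common sign. Hadamard's inequality, applied row-wise together with the elementary estimate $\prod_{i=1}^r \|a_i\|_1 \le \|A\|_{1,\infty}^r$ for the rows $a_i$ of $A$, bounds each such minor by $\|A\|_{1,\infty}^r$. A slight sharpening of this estimate, obtained by adjoining an auxiliary row of $1$'s to $A_S$ and bounding the resulting $(r+1)\times(r+1)$ determinant whose cofactor expansion along that row is exactly $\sum_{i \in S} x_i = \|x\|_1$, yields the claimed inequality $\|x\|_1 \le (1 + \|A\|_{1,\infty})^r$.

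The main obstacle is the integrality-preserving minimality argument in the middle paragraph: turning a rational vector in $\ker(A_S)$ into a strictly smaller non-negative \emph{integer} solution requires a careful choice of the scaling factor so that denominators clear simultaneously with preservation of the componentwise order imposed by $x_S$. Once that technicality is handled, the remaining ingredients (the dimension count and Hadamard's bound for the minors) are standard.
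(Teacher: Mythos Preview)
The paper does not prove this theorem at all; it is quoted from Pottier~\cite{Pottier1991} and used as a black box. So there is no ``paper's own proof'' to compare against, and your proposal must stand on its own.

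Unfortunately it does not. Your second step asserts that for a minimal non-trivial solution $x$ with support $S$, the restricted kernel $\ker(A_S)$ is one-dimensional, and hence $|S|\le r+1$. This is false. Take $A=(1,1,1,-3)\in\Z^{1\times 4}$, so $r=1$. The vector $x=(1,1,1,1)$ satisfies $Ax=0$ and is minimal: any $0\le y\le x$ with $Ay=0$ has $y_4\in\{0,1\}$, and $y_4=0$ forces $y=0$ while $y_4=1$ forces $y_1+y_2+y_3=3$ with each $y_i\le 1$, hence $y=x$. Yet the support is all of $[1,4]$ and $\dim\ker(A_S)=3$. Pottier's bound gives $\|x\|_1\le (1+6)^1=7$, which is fine, but your route to it collapses.

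The underlying confusion is between elements of the Hilbert basis (minimal non-negative integer solutions) and generators of the \emph{extreme rays} of the cone $\{x\in\mathbb{R}^m_{\ge 0}: Ax=0\}$. The latter do satisfy $\dim\ker(A_S)=1$, and the Cramer/Hadamard argument you sketch works for them. Hilbert basis elements, however, may lie in the relative interior of the cone, as $(1,1,1,1)=\tfrac{1}{3}\big((3,0,0,1)+(0,3,0,1)+(0,0,3,1)\big)$ does in the example. Your ``main obstacle'' paragraph already senses the difficulty: the perturbation $x_S+\lambda w$ you build is non-negative and smaller than $x_S$ in one coordinate, but it can be \emph{larger} in another (try $w=(1,-1,0,0)$ above), so no integer rescaling and subtraction of multiples of $x$ will produce a non-zero solution dominated by $x$. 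This gap is not a technicality; it is exactly where the argument breaks. Pottier's actual proof proceeds differently and does not rely on a support-size bound.
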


We only need \cref{pottier} for the case that $A$ is a row vector $u^T$ for $u \in \Z^k$.
\begin{cor} \label{c-pottier}
Let $u \in \Z^k$. Each 
non-trival minimal solution  $x \in \N^k$ to $u^T x=0$  satisfies
$\|x\|_1\le 1+\|u\|_1$.
\end{cor}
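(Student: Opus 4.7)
The plan is to deduce the corollary as a direct specialization of Pottier's theorem (\cref{pottier}) to the case where the matrix $A$ is a single row $u^T \in \Z^{1 \times k}$. First I would unpack the definitions appearing in the bound $(1+\|A\|_{1,\infty})^r$ when $A = u^T$. Since $A$ has only one row indexed by $i = 1$, the definition
\[
\|A\|_{1,\infty} \;=\; \max_{i\in[1,1]}\sum_{j\in[1,k]} |a_{ij}| \;=\; \sum_{j=1}^{k}|u_j| \;=\; \|u\|_1
\]
reduces to the $\ell^1$-norm of $u$. This is the first key computation.

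Next I would deal with the rank. If $u \neq 0$, then $A = u^T$ is a nonzero row vector and so its rank is $r = 1$. Plugging into Pottier's bound gives, for any non-trivial minimal solution $x \in \N^k$ of $u^T x = 0$,
\[
\|x\|_1 \;\le\; (1+\|A\|_{1,\infty})^r \;=\; (1+\|u\|_1)^1 \;=\; 1 + \|u\|_1,
\]
which is exactly the desired inequality.

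The only remaining case is the degenerate one $u = 0$, which I would dispose of by hand rather than by appealing to Pottier with $r = 0$. When $u = 0$, every nonzero $x \in \N^k$ satisfies $u^T x = 0$, so the minimal solutions are precisely the standard basis vectors $e_1,\ldots,e_k$, each of $\ell^1$-norm $1 \le 1 = 1 + \|u\|_1$. Thus the bound holds in this case as well.

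I do not expect any serious obstacle here: the whole statement is essentially a one-line specialization, and the only reason it is worth writing down is to record the clean linear dependence on $\|u\|_1$ (as opposed to the exponential-in-$r$ dependence in the general Pottier bound) for later use. The only minor point to be careful about is the edge case $u = 0$, which the theorem as stated does not directly cover since it refers to the rank and assumes a non-trivial solution exists.
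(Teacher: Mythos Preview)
Your proposal is correct and matches the paper's approach: the paper simply states the corollary as the specialization of \cref{pottier} to the row vector $A=u^T$, with no further argument. Your extra care with the degenerate case $u=0$ is fine (and in fact Pottier's bound with $r=0$ already gives $\|x\|_1\le 1$, agreeing with your direct computation).
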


By applying \cref{pottier} to the row vector $(u^T, -b)$ for $b \in \Z$, it is easy to
deduce that for each $x\in\N^k$ with $u^T x=b$, there is a $y\in\N^k$ with $u^T y=b$,
$y\le x$, and $\|y\|_1\le 1+\| \binom{u}{b}\|_1 = 1+ \|u\|_1 + |b|$.
We reformulate \cref{c-pottier} as follows.
\begin{lem}\label{pottier-reform}
Let $u \in \Z^k$ and $b \in \Z$.
Then the set
$\{x\in\N^k \mid u^T x=b\}$ admits a decomposition
$\{ x\in\N^k \mid u^T x=b \} = \bigcup_{i=1}^{s} c_i + C\N^t$,
where $c_i\in\N^k$ and $C\in\N^{k\times t}$ with $\|c_i\|_1$ and $\|C\|_{\infty,1}$ bounded by $1+\|u\|_1+|b|$.
\end{lem}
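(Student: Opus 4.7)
The plan is to reduce the inhomogeneous equation $u^T x = b$ to a homogeneous one in $k+1$ variables so that \cref{c-pottier} (and the underlying Pottier bound) applies directly. Concretely, I would form the augmented row vector $w^T = (u^T, -b) \in \Z^{k+1}$; then $x \in \N^k$ satisfies $u^T x = b$ if and only if $\binom{x}{1} \in \N^{k+1}$ satisfies $w^T y = 0$.

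Applying \cref{pottier} to $w^T$ (which has rank at most $1$), every non-trivial minimal solution $y \in \N^{k+1}$ of $w^T y = 0$ satisfies $\|y\|_1 \leq 1 + \|w\|_{1,\infty} = 1 + \|u\|_1 + |b|$. I would then partition these minimal solutions according to their last coordinate. Those with last coordinate $0$ are of the form $(v, 0)$ with $v$ a minimal solution of $u^T v = 0$; enumerate the projections $v_1, \ldots, v_t \in \N^k$ and let $C$ be the matrix with columns $v_1, \ldots, v_t$. Among the minimal solutions whose last coordinate is positive, I would retain only those with last coordinate exactly $1$ and enumerate their projections $c_1, \ldots, c_s \in \N^k$ (so $u^T c_i = b$).

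For the decomposition, fix $x \in \N^k$ with $u^T x = b$. Since the monoid of non-negative solutions of $w^T y = 0$ is generated by its minimal elements, I can write $\binom{x}{1}$ as a non-negative integer combination of minimal solutions. Inspecting the last coordinate, which equals $1$, forces exactly one minimal solution with positive last coordinate to appear, with multiplicity $1$ and last coordinate exactly $1$; all remaining terms have last coordinate $0$. Projecting to the first $k$ coordinates yields $x = c_i + C\lambda$ for some $i \in [1,s]$ and $\lambda \in \N^t$. The reverse inclusion is immediate, and the bounds $\|c_i\|_1, \|C\|_{\infty,1} \leq 1 + \|u\|_1 + |b|$ follow directly from the bound on minimal solutions of $w^T y = 0$.

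I do not foresee any serious obstacle; the only subtle point is the last-coordinate bookkeeping that isolates the minimal solutions with last coordinate in $\{0,1\}$ as exactly those needed for the semilinear representation, while minimal solutions with strictly larger last coordinate play no role.
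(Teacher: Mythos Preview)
Your proposal is correct and follows essentially the same approach as the paper: both arguments homogenize via the augmented row $(u^T,-b)$, invoke Pottier's bound on minimal non-negative solutions, and take the $c_i$ from minimal solutions of the inhomogeneous equation and the columns of $C$ from minimal solutions of the homogeneous one. The paper simply states the decomposition without the last-coordinate bookkeeping you spell out, but the underlying idea is identical.
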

\begin{proof}
Let $\{c_1,\ldots,c_s\}$ be the set of minimal solutions of $u^Tx=b$. Then, as explained above, \cref{c-pottier}
yields $\|c_i\|_1\le  1+ \|u\|_1 + |b|$.
Moreover, let $C\in\N^{k\times t}$ be the matrix whose columns are the non-trivial minimal solutions of
$u^T x=0$. Then we have $\|C\|_{\infty,1} \le 1+\|u\|_1$.
This clearly yields the desired decomposition.
\end{proof}
The problem is that we want to apply \cref{pottier-reform} in a situation where we
have no bound on $\|u\|_1$, but only one on $\|u\|$.  
The following lemma yields such a bound. 

\begin{lem}\label{sol-decomp-onedim}
If $u\in\Z^k$ and $b\in\Z$ with $\|u\|,|b|\le M$,
then we have a decomposition
$\{x\in\N^k \mid u^Tx=b \} = \bigcup_{i=1}^{s} c_i + C\N^t \label{sol-decomp-onedim-final}$
where $\|c_i\|_1, \|C\|_{\infty,1} \leq 1+(M+2)M$.
\end{lem}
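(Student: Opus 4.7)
The plan is to reduce to Lemma~\ref{pottier-reform} by aggregating coordinates that share the same coefficient. Since $\|u\|\le M$, the entries of $u$ lie in $V_0:=\{-M,\ldots,M\}$, so I partition $[1,k]=\bigsqcup_{v\in V_0} I_v$ with $I_v=\{i:u_i=v\}$. Writing $V=V_0\setminus\{0\}$ and $u'=(v)_{v\in V}\in\Z^V$, the aggregated variables $y_v:=\sum_{i\in I_v}x_i$ transform $u^Tx=b$ into the reduced equation $(u')^Ty=b$. The key point is that $\|u'\|_1=2(1+\cdots+M)=M(M+1)$ is independent of $k$, so Lemma~\ref{pottier-reform} applied to this reduced equation yields a decomposition in $y$-space of magnitude at most $1+M(M+1)+|b|\le(M+1)^2=1+(M+2)M$.

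Rather than literally lifting the $y$-decomposition back to $x$-space (which would blow up the magnitude, since each $y_v$ must be split among the coordinates in $I_v$), I describe the target decomposition directly: take as base vectors $c_1,\ldots,c_s$ the coordinate-wise minimal solutions of $u^Tx=b$ in $\N^k$, and as columns of $C$ the coordinate-wise minimal non-zero solutions of $u^Tx=0$. Both collections are finite by Dickson's lemma, and a routine argument shows $\{x\in\N^k:u^Tx=b\}=\bigcup_{i=1}^s c_i+C\N^t$. The task thus reduces to bounding $\|c_i\|_1$ and $\|C\|_{\infty,1}$ by $1+(M+2)M$.

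For a minimal base solution $c$: if $c_j>0$ for some $j\in I_0$, then $c-e_j$ is a strictly smaller solution (since $u_j=0$), contradicting minimality; hence $c$ vanishes on $I_0$, and $\|c\|_1=\sum_{v\in V}y_v$ with $y_v:=\sum_{i\in I_v}c_i$. This $y\in\N^V$ solves $(u')^Ty=b$, and it is itself coordinate-wise minimal: given $y'\le y$ with $y'\ne y$ and $(u')^Ty'=b$, one can for each $v\in V$ pick $c'_i\le c_i$ with $\sum_{i\in I_v}c'_i=y'_v$, producing a solution $c'\le c$, $c'\ne c$, contradicting minimality of $c$. Applying Lemma~\ref{pottier-reform} to $y$ then gives $\|c\|_1=\|y\|_1\le 1+(M+2)M$.

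For a minimal non-zero period $p$ of $u^Tx=0$: if $p_j>0$ for some $j\in I_0$, then $p-e_j$ is also a period, so minimality forces $p-e_j=0$, i.e., $p=e_j$ with $\|p\|_1=1$; otherwise $p$ vanishes on $I_0$, the aggregated $y$ is a minimal non-zero solution of $(u')^Ty=0$ by the same splitting argument, and $\|p\|_1=\|y\|_1\le 1+M(M+1)$. The principal technical point is the careful transfer of minimality through the aggregation map---splitting a smaller $y'$ into a coordinate-wise lift $c'\le c$ is straightforward but needs to be spelled out explicitly. All resulting bounds match the stated value $1+(M+2)M=(M+1)^2$.
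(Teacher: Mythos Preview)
Your argument is correct and rests on the same core idea as the paper: aggregate coordinates according to their coefficient value so that the reduced vector $u'$ has $\|u'\|_1\le M(M+1)$ independently of~$k$, then invoke the Pottier bound (\cref{c-pottier} and the remark preceding \cref{pottier-reform}) in the aggregated space.

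The execution differs only in how the result is pulled back to $\N^k$. The paper makes the aggregation explicit via a matrix $S\in\N^{(2M+1)\times k}$ with $u^T=v^TS$, obtains a decomposition $\bigcup_i c'_i+C'\N^{t'}$ in the reduced space from \cref{pottier-reform}, and then lifts each $c'_i$ and each column of $C'$ through the (finite) fibers $T_y=\{x:Sx=y\}$; since $\|Sx\|_1=\|x\|_1$, the bounds carry over verbatim. You instead take the canonical decomposition in $\N^k$ (minimal solutions plus minimal homogeneous solutions) and transfer \emph{minimality} through the aggregation map to bound the $\ell_1$-norms. Your route is slightly more direct in that the validity of the decomposition is immediate; the paper's route avoids the (easy but necessary) check that minimality descends. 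One small point: the bound you cite for a minimal $y$ with $(u')^Ty=b$ is really the sentence just before \cref{pottier-reform} (a direct consequence of \cref{c-pottier}), not \cref{pottier-reform} itself.
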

\begin{proof}
Write $u^T=(b_1,\ldots,b_k)$ and consider the row vector $v^T = (b'_1,\ldots,b'_{2M+1})$, $v  \in\Z^{2M+1}$, with entries $b'_i = i-(M+1)$.
Thus, we have
\[ v^T =  (b'_1,\ldots,b'_{2M+1})=(-M,-M+1,\ldots,-1,0,1,\ldots,M) .\]
Moreover, define the matrix $S = (s_{ij}) \in\N^{(2M+1)\times k}$ with 
\[ s_{ij} = \begin{cases} 1 & \text{if $b_j=b'_i$,} \\ 0 & \text{otherwise.} \end{cases} \]
Then clearly $u=v^T S$ and $\|v\|_1 =(M+1)M$. Furthermore, observe that
we have $\|Sx\|_1=\|x\|_1$ for every $x\in\N^k$ and that for each $y\in\N^{2M+1}$ the set
\[ T_y = \{ x\in\N^k \mid Sx=y\} \]
is finite. According to \cref{pottier-reform}, we can write
\begin{equation} \{x\in\N^{2M+1} \mid v^T x=b \} = \bigcup_{i=1}^{s'} c'_i+C'\N^{t'} \label{sol-decomp-onedim-step}\end{equation}
where $\|c'_i\|_1, \|C'\|_{\infty,1} \leq 1+ (M+1)M + M  = 1+(M+2)M$.
Let $\{ c_1,\ldots,c_s\}$ be the union of all sets $T_{c'_i}$
for $i\in[1,s']$ and let $C\in\N^{k\times t}$ be the matrix whose columns
comprise all $T_v$ where $v\in\N^{2M+1}$ is a column of $C'$.  Since
we have $\|Sx\|_1=\|x\|_1$ for $x\in\N^k$, the vectors $c_i$ obey the same bound as the
vectors $c'_i$, meaning $\|c_i\|_1\le 1+(M+2)M$. By the same argument, we have
$\|C\|_{\infty,1}\le \|C'\|_{\infty,1}\le 1+(M+2)M$. It remains to be shown that 
the equality from the \lcnamecref{sol-decomp-onedim} holds.

Suppose $u^T x=b$. Then $v^TSx=b$ and hence $Sx=c'_i+C'y$ for some $y\in\N^{t'}$.
Observe that if $Sz=p+q$ for $p,q \in \N^{2M+1}$, then $z$ decomposes as $z=p'+q'$, $p',q' \in \N^k$, so that $Sp'=p$ and
$Sq'=q$. Therefore, we can write $x=x_0+\cdots +x_n$ with $Sx_0=c'_i$ and
$Sx_j$ is some column of $C'$ for each $j\in[1,n]$. Hence, $x_0=c_r$ for some $r\in[1,s]$ and for
each $j\in[1,n]$, $x_j$ is a column of $C$. This proves $x\in c_r + C\N^t$.

On the other hand, the definition of $c_1,\ldots,c_s$ and $C$ implies that
for each column $v$ of $C$, $Sv$ is a column of $C'$. Moreover, for each
$i\in[1,s]$, there is a $j\in[1,s']$ with $Sc_i=c'_j$ and thus
$Sc_i+SC\N^t\subseteq c'_j+C'\N^{t'}$. Therefore
\[ u^T(c_i+C\N^t) =v^TS(c_i+C\N^t)\subseteq v^T(c'_j+C'\N^{t'}) \]
and the latter set contains only $b$ because of \labelcref{sol-decomp-onedim-step}.
\end{proof}

\begin{lem}\label{semil-intersection}
Let $S\subseteq\N^k$ be a semilinear set of magnitude $M$ and $u\in\Z^k$, $b\in\Z$
with $\|u\|, |b| \le m$. Then $\{ x\in S \mid u^T x=b \}$
is a semilinear set of magnitude at most $(kmM)^d$ for some constant $d$.
\end{lem}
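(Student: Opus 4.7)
The plan is to reduce to the one-dimensional Diophantine problem already handled by \cref{sol-decomp-onedim} and then transport the resulting decomposition back through the parametrization of the linear components of $S$. The main obstacle is that a single linear component of $S$ can have many generators (the number $t$ of generators is not controlled by the magnitude $M$), so we cannot afford to enumerate solutions in the parameter space; instead we will rely on \cref{sol-decomp-onedim} to give us a magnitude bound that is independent of $t$.

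First, write a semilinear representation $S=\bigcup_{i=1}^{n} x^{(i)}+F_i^\oplus$ of magnitude at most $M$. Since intersection distributes over union, it suffices to analyse one linear component $L=x_0+F^\oplus$, where $F=\{f_1,\ldots,f_t\}\subseteq\N^k$ and $\|x_0\|,\|f_j\|\le M$. I will parametrize $L$ by the $k\times t$ matrix $F$ whose columns are $f_1,\ldots,f_t$; elements of $L$ have the form $x=x_0+Fy$ for $y\in\N^t$. The condition $u^Tx=b$ then becomes
\[ v^T y = b', \qquad \text{where } v^T=u^TF\in\Z^t,\ b'=b-u^Tx_0\in\Z. \]

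Next I estimate $\|v\|$ and $|b'|$. Since every entry of $v$ is an inner product of $u$ with some column of $F$, we have $\|v\|\le k\|u\|\|F\|_\infty\le kmM$; similarly $|b'|\le|b|+k\|u\|\|x_0\|\le m+kmM$. Hence both are bounded by some constant times $kmM$. Apply \cref{sol-decomp-onedim} to $v^Ty=b'$ with the parameter value $M_0=O(kmM)$ to obtain a decomposition
\[ \{y\in\N^t\mid v^Ty=b'\}=\bigcup_{j=1}^{s} c_j+C\N^{t'}, \]
with $\|c_j\|_1,\|C\|_{\infty,1}\le 1+(M_0+2)M_0=O(k^2m^2M^2)$.

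Finally I push this back into $\N^k$ through the affine map $y\mapsto x_0+Fy$. The set $L\cap\{x\mid u^Tx=b\}$ equals $\bigcup_{j=1}^{s}\bigl(x_0+Fc_j\bigr)+(FC)\N^{t'}$, which is already a semilinear representation. For the magnitudes: each entry of $Fc_j$ is bounded by $\|F\|_\infty\cdot\|c_j\|_1\le M\cdot O(k^2m^2M^2)=O(k^2m^2M^3)$, and likewise for each column of $FC$, using $\|F\|_\infty\cdot\|C\|_{\infty,1}$. Combining with $\|x_0\|\le M$, every entry appearing in the representation is bounded by $O(k^2m^2M^3)$, which is at most $(kmM)^d$ for a suitable constant $d$ (e.g.\ $d=4$). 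Taking the union over the $n$ linear components of $S$ preserves the magnitude bound, completing the proof.
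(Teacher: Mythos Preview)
Your proof is correct and follows essentially the same approach as the paper: reduce to a single linear component, pull the constraint $u^Tx=b$ back along the parametrization $y\mapsto x_0+Fy$ to a one-dimensional equation $v^Ty=b'$, invoke \cref{sol-decomp-onedim}, and push the resulting decomposition forward while tracking norms via the inequality $\|Fc\|\le\|F\|_\infty\cdot\|c\|_1$. The paper's proof is identical in structure and in the bounds obtained, differing only in notation.
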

\begin{proof}
Let $T=\{x\in\N^k \mid u^Tx=b\}$.  We may assume that $S$ is linear of magnitude
$M$, because if $S=L_1\cup\cdots\cup L_n$, then $S\cap T=(L_1\cap T)\cup
\cdots\cup (L_n\cap T)$.

Write $S=a+A\N^n$ with $a\in\N^k$ and $A\in\N^{k\times n}$, where
$\|a\|\le M$ and $\|A\|_\infty\le M$.  Consider the set $U=\{x\in\N^n \mid
u^T Ax=b-u^T a\}$. Note that $u^T A\in\Z^{1\times n}$ and 
\[ \|u^TA\| \le k\cdot \|u\| \cdot \|A\|_\infty \le kmM, \]
\[ |b-u^Ta| \le m+k\cdot \|u\| \cdot \|a\|\le m+kmM. \]
According to \cref{sol-decomp-onedim}, we can write $U=\bigcup_{i=1}^s
c_i+C\N^t$ where $\|c_i\|_1$ and $\|C\|_{\infty,1}$ are at most
$1+ (m+kmM)(m+kmM+2)$. Observe that
\[ a+AU = \bigcup_{i=1}^s a+Ac_i+AC\N^t \]
and 
\[ \|a+Ac_i\|\le \|a\| + \|A\|_\infty\cdot \|c_i\|_1 \le  2M + M (m+kmM)(m+kmM+2) , \]
\[ \| AC \|_{\infty} \le \|A\|_\infty \cdot \|C\|_{\infty,1} \le  M + M (m+kmM)(m+kmM+2). \]
Finally, note that $S\cap T=a+AU$.
\end{proof}

We are now ready to prove \cref{tame-directz}.
\begin{proof}[Proof of \cref{tame-directz}]
Suppose $G$ is knapsack tame with polynomial $\bar{p}$.
Let 
\begin{equation}
h_0g_1^{\x_1}h_1g_2^{\x_2}h_2\cdots g_k^{\x_k}h_k=1 \label{tame-directz-eq1}
\end{equation}
be an exponent equation of size $n$ with pairwise distinct variables
$\x_1,\ldots,\x_k$ and with $h_0,g_1,h_1,\ldots,g_k,h_k\in G\times\Z$. Let
$h_i=(\bar{h}_i,y_i)$ for $i\in[0,k]$ and $g_i=(\bar{g}_i,z_i)$ for $i\in[1,k]$.

The exponent equation
$\bar{h}_0\bar{g}_1^{\x_1}\bar{h}_1\bar{g}_2^{\x_2}\bar{h}_2\cdots
\bar{g}_k^{\x_k}\bar{h}_k=1$ has a semilinear solution set
$\bar{S}\subseteq\N^k$ of magnitude at most $\bar{p}(n)$.  The solution set of
\labelcref{tame-directz-eq1} is 
\[ S=\{ (x_1,\ldots,x_k)\in\bar{S} \mid z_1x_1+\cdots
+z_kx_k=y \}, \]
 where $y=-(y_0+\cdots +y_k)$. Note that $|z_i|\le n$ and
$|y|\le n$.  By \cref{semil-intersection}, $S$ is semilinear of magnitude
$(n^2\bar{p}(n))^d$ for some constant $d$ (recall that $k\le n$).
\end{proof}

\subsection{Tameness of free products}\label{tameness-free}
This section is devoted to the proof of the following \lcnamecref{tameness-free-preserved}.
\begin{prop}\label{tameness-free-preserved}
If $G_0$ and $G_1$ are knapsack tame, then so is $G_0*G_1$.
\end{prop}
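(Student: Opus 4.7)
The plan is to reduce the given exponent equation over $G_0 * G_1$ to a controlled collection of exponent equations over $G_0$ and over $G_1$, so that knapsack tameness of the factors carries over. Let $E: h_0 g_1^{\x_1} h_1 \cdots g_k^{\x_k} h_k = 1$ be an exponent equation of size $n$. First, I would normalize each $g_i$ in the free-product normal form as $g_i = c_i w_i c_i^{-1}$ with $w_i$ cyclically reduced and $|c_i|+|w_i|\le |g_i|$. After substituting $g_i^{\x_i}=c_i w_i^{\x_i}c_i^{-1}$ and absorbing the conjugators into the neighbouring $h_i$'s, $E$ becomes
\[ d_0\,w_1^{\x_1}\,d_1\,w_2^{\x_2}\,d_2\cdots w_k^{\x_k}\,d_k = 1, \]
with all $d_i, w_i$ of size $O(n)$. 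Classify each $w_i$ as trivial, as contained in $G_0\setminus\{1\}$ or $G_1\setminus\{1\}$, or as a \emph{mixed} cyclically reduced word of syllable length $\ge 2$; in the mixed case, for $\x_i\ge 1$ the normal form of $w_i^{\x_i}$ is the concatenation of $\x_i$ copies of $w_i$ with no internal cancellation, so the only way $w_i^{\x_i}$ can interact with the rest of the product is through boundary syllables.

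The core of the argument is a case analysis on the \emph{cancellation shape} $\sigma$ followed by the free-product reduction to $1$. A shape $\sigma$ records, for every interface between a $d_i$ and an adjacent $w_j^{\x_j}$, how many syllables cancel, and which leftmost and rightmost syllables of mixed blocks get matched against which (only the first and last $O(|w_i|)$ syllables of a mixed $w_i^{\x_i}$ can participate, since the interior is rigid). For each fixed $\sigma$, the set of solutions $(x_1,\ldots,x_k)$ compatible with $\sigma$ is cut out by two kinds of constraints: (i) a bounded collection of exponent equations purely over $G_0$ and purely over $G_1$, each of size $O(n)$, expressing the syllable identifications forced by the cancellations; and (ii) a system of linear relations on the $\x_j$'s, typically of the form $|w_i|\x_i = |w_j|\x_j$, that equate the lengths of mixed blocks matched against each other, together with bounds like $\x_i\le O(n)$ whenever a mixed block is entirely absorbed into neighbouring $d_i$'s. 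Applying knapsack tameness of $G_0$ and $G_1$ to the equations in (i) yields semilinear solution sets of polynomial magnitude, which I would then intersect with the linear constraints in (ii) using \cref{semil-intersection} and the Pottier-style \cref{sol-decomp-onedim}, producing for each $\sigma$ a semilinear set $S_\sigma$ of polynomial magnitude.

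The total solution set is $S = \bigcup_\sigma S_\sigma$; although there may be exponentially many shapes, the magnitude of a union of semilinear sets is bounded by the maximum magnitude of its summands (simply concatenate the semilinear representations), so $S$ is semilinear of polynomial magnitude and $G_0 * G_1$ is knapsack tame. The hard part is the combinatorial bookkeeping for the cancellation shapes: verifying that every free-product reduction pattern is captured by at least one $\sigma$, that each $\sigma$ yields an equivalent system of factor equations of size $O(n)$, and — most delicately — that handling several mixed blocks $w_i^{\x_i}$ which cancel against each other (forcing a non-crossing matching on the relevant indices and the attendant linear length-identifications) remains compatible with the polynomial magnitude bound uniformly in $\sigma$ after the intersection step.
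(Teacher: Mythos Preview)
Your high-level plan --- normalize to cyclically reduced $w_i$, separate simple from mixed cycles, and push the simple-cycle part down to $G_0$ and $G_1$ --- matches the paper's setup. The critical difficulty, however, is the mixed-versus-mixed cancellation, and here your sketch does not yet contain a proof.

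The specific problem is your description of the linear constraints in (ii). You write that they are ``typically of the form $|w_i|\x_i = |w_j|\x_j$'', with the alternative being $\x_i \in O(n)$ when a mixed block is absorbed entirely into the $d_i$'s. Neither case is generic. A single mixed block $w_i^{x_i}$ can simultaneously cancel a bounded number of syllables against neighbouring $d$'s, a contiguous range of syllables against one mixed block $w_j^{x_j}$, another range against a different mixed block $w_\ell^{x_\ell}$, and so on (subject to a well-nestedness constraint on the matching). So the constraints are not pairwise equalities on the $x_i$; at best they become a system $\|w_i\|\, x_i = \sum_j c_{ij} + O(n)$ with auxiliary variables $c_{ij}$, together with periodicity and word-compatibility conditions relating $w_i$ to rotations of $w_j^{-1}$. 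You would then have to project out the $c_{ij}$ and bound the magnitude of the result. This is not automatic: iterating \cref{semil-intersection} over $\Theta(k)$ linear equations raises the magnitude to a $d^{\Theta(k)}$-th power, which is not polynomial in $n$.

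The paper circumvents exactly this obstacle. Rather than cutting out the mixed coordinates by equations and then intersecting, it exhibits the solution set \emph{directly} as a union of translates of $\Periods^\oplus$, where $\Periods = \{\, \|u_j\|\, e_i + \|u_i\|\, e_j : u_i, u_j \text{ mixed}\,\}$ is an explicit set of at most $k^2$ period vectors, each of magnitude at most $n$. The key technical step (\cref{tameness-free-nonstandard}--\cref{tameness-free-removeperiods}) is a pigeonhole argument on a fixed cancellation: if some mixed coordinate $x_i$ exceeds a polynomial bound, then most $u_i$-syllables lie in two-element edges paired with syllables of a single other mixed cycle $u_j$, and one can delete $\|u_i\|\cdot\|u_j\|$ consecutive such pairs to get a smaller certified solution from which the original is recovered by adding the period $\|u_j\|\,e_i + \|u_i\|\,e_j$. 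This gives the polynomial magnitude bound with no intersection step at all. Your sketch would need either this period-extraction mechanism or a genuinely different argument controlling the magnitude after projecting out the $c_{ij}$; as written, it supplies neither.
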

Let $G = G_0*G_1$.
Suppose that for $i \in\{0,1\}$, the group $G_i$ is  generated by $A_i$, 
where w.l.o.g.~$A_i^{-1}=A_i$ and let $A=A_0\uplus A_1$, which generates $G$.  Recall that every $g\in G$ can be
written uniquely as $g=g_1\cdots g_n$ where $n \geq 0$, $g_i\in (G_0 \setminus\{1\}) \cup (G_1 \setminus\{1\})$ for
each $i\in[1,n]$ and where $g_j\in G_t$ iff $g_{j+1}\in G_{1-t}$ for
$j\in[1,n-1]$.  We call $g$ \emph{cyclically reduced} if for some
$t\in\{0,1\}$, either $g_1\in G_t$ and $g_n\in G_{1-t}$ or $g_1,g_n\in G_t$ and
$g_ng_1\ne 1$. Consider an exponent equation
\begin{equation}  h_0g_1^{\x_1}h_1\cdots
g_k^{\x_k}h_k=1,\label{tameness-free-expeq} \end{equation} 
of size $n$, where $g_i$ is represented by $u_i\in A^*$ for $i\in[1,k]$ and
$h_i$ is represented by $v_i\in A^*$ for $i\in[0,k]$.  Then clearly
$\sum_{i=0}^k |v_i|+\sum_{i=1}^k |u_i|\le n$. Let $S\subseteq\N^k$ be the set
of all solutions to~\labelcref{tameness-free-expeq}.  Every word $w\in A^*$ has
a (possibly empty) unique factorization into maximal factors from $A_0^+\cup
A_1^+$, which we call \emph{syllables}. By $\|w\|$, we denote the number of
syllables of $w$.  The word $w$ is \emph{reduced} if none of its syllables
represents $1$ (in $G_0$ resp. $G_1$).  We define the maps $\lambda,\rho\colon
A^+\to A^+$ (''rotate left/right''), where for each word $w\in A^+$ with its
factorization $w=w_1\cdots w_m$ into syllables, we set $\lambda(w)=w_2\cdots
w_mw_1$ and $\rho(w)=w_m w_1w_2\cdots w_{m-1}$.

Consider a word $w\in A^*$ and suppose $w=w_1\cdots w_m$, $m \geq 0$, where for each
$i\in[1,m]$, we have $w_i\in A_j^+$ for some $j\in\{0,1\}$ (we allow that $w_i, w_{i+1} \in A_j^+$). A
\emph{cancellation} is a subset $C\subseteq \Powerset{[1,m]}$ that is
\begin{itemize}
\item \emph{a partition}: $\bigcup_{I\in C} I=[1,m]$ and $I\cap J=\emptyset$ for any $I,J\in C$ with $I\ne J$.
\item \emph{consistent}: for each $I\in C$, there is an $i\in\{0,1\}$ such that
$w_j\in A_i^+$ for all $j\in I$.
\item \emph{cancelling}: if $\{i_1,\ldots,i_\ell\}\in C$ with
$i_1<\cdots<i_\ell$, then $w_{i_1}\cdots w_{i_\ell}$ represents $1$ in $G$.
\item \emph{well-nested}: there are no $I,J\in C$ with $i_1,i_2\in I$ and $j_1,j_2\in J$ such that $i_1<j_1<i_2<j_2$.
\item \emph{maximal}: if $w_i, w_{i+1} \in A_j^+$ for $j\in\{0,1\}$ then there is an $I \in C$ with $i,i+1 \in I$.
\end{itemize}
Since $C$ can be regarded as a hypergraph on $[1,m]$, the elements of $C$ will
be called \emph{edges}.  We have the following simple fact:

\begin{lem}
Let  $w=w_1\cdots w_m$, $m \geq 0$, where for each
$i\in[1,m]$, we have $w_i\in A_j^+$ for some $j\in\{0,1\}$.
Then $w$ admits a cancellation if and only if it represents $1$ in $G$.
\end{lem}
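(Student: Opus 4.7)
\emph{Forward direction.} The plan is to induct on $|C|$ (the maximality clause is not needed here), locating an innermost edge that is necessarily a contiguous block, peeling it off, and recursing. Concretely, pick $I=\{i_1<\cdots<i_\ell\}\in C$ of minimal diameter $i_\ell-i_1$. If some index $k\in(i_1,i_\ell)$ lies in a different edge $J$, the well-nestedness clause forbids any element of $J$ from lying outside $(i_1,i_\ell)$ (an outside element together with $k$ would cross the pair $\{i_1,i_\ell\}\subseteq I$); but then $J$ would have strictly smaller diameter, contradicting minimality. Hence $I$ is a block of consecutive integers, the cancelling clause gives $w_{i_1}\cdots w_{i_\ell}=1$ in $G$, and deleting this block produces a word $w'$ equal to $w$ in $G$ on which $C\setminus\{I\}$ remains a partition, consistent, cancelling and well-nested collection. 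Apply the inductive hypothesis.

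\emph{Backward direction.} Assume $w=1$ in $G$. I will construct a cancellation by running the standard free-product reduction with provenance tracking. Maintain a sequence of active syllables $u_1,\ldots,u_p$ with disjoint labels $L_r\subseteq[1,m]$, initialised as $u_r=w_r$, $L_r=\{r\}$. Two moves are allowed: \emph{(M)} merge adjacent syllables $u_r,u_{r+1}$ lying in the same factor, replacing them by $u_ru_{r+1}$ with label $L_r\cup L_{r+1}$; \emph{(R)} delete an active syllable that represents $1$ in its factor, emitting its label as an edge. Run the process under the \emph{merge-first} discipline, never applying (R) while (M) is available. Each move strictly reduces $p$, so the process halts in at most $2m$ steps with a reduced, alternating sequence of non-identity syllables which still represents $w=1$; by the free-product normal form theorem this sequence must be empty. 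Hence every original index gets emitted and the emitted labels form a candidate cancellation $C$.

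\emph{Verification of the clauses.} Partition, consistency and the cancelling condition are immediate from the bookkeeping. Maximality is secured by merge-first: if $w_i$ and $w_{i+1}$ share a factor, they are fused at the very first opportunity and the joint label then survives every later move, so $i$ and $i+1$ are emitted inside one common edge. For well-nestedness, the key invariant is that the active labels always satisfy $\max L_r<\min L_{r+1}$; this is preserved by both (M) and (R). It then follows that when an edge $I=L_r$ is emitted, every index in $(\min I,\max I)\setminus I$ has already been emitted in some earlier edge contained in a single ``gap'' of $I$, while every later emitted edge $J$ has all its elements outside $[\min I,\max I]$. Both alternatives are incompatible with the crossing pattern $i_a<j_a<i_b<j_b$.

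The main obstacle will be the well-nestedness verification, which combines the order-preservation invariant for active labels with a careful gap/complement analysis of earlier versus later emitted edges. The maximality requirement is the reason the merge-first discipline is essential rather than an arbitrary reduction order.
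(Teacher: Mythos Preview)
Your proof is correct, and both directions follow essentially the same strategy as the paper's proof.

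For the forward direction (cancellation $\Rightarrow$ $w=1$), the paper also drops maximality, works with a ``weak cancellation'', and removes an interval edge before recursing; it simply asserts that well-nestedness forces some edge to be an interval, whereas your minimal-diameter argument makes this explicit. For the backward direction ($w=1$ $\Rightarrow$ cancellation), the paper gives a short structural induction: since $w=1$ in the free product, some maximal same-alphabet run $w_i\cdots w_j$ is trivial in its factor; remove it, recurse on $w'$, and add the edge $[i,j]$. Your merge-first process with provenance tracking is an operational unfolding of exactly this recursion---each round of exhaustive (M)-moves followed by one (R)-move corresponds to one step of the paper's induction. What your formulation buys is that the verification of maximality and well-nestedness becomes a matter of maintaining the order invariant $\max L_r<\min L_{r+1}$, which is clean; the paper's version is shorter because the choice of a \emph{maximal} same-alphabet run makes maximality immediate and keeps the reindexing trivial. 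One small remark: for well-nestedness you only need that every edge $J$ emitted after $I$ lies entirely outside $[\min I,\max I]$; the ``single gap'' claim for earlier edges is true but stronger than required.
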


\begin{proof}
Assume that $w$ representes $1$ in the free product $G$.
The case $w = \varepsilon$ is clear; hence assume that $w \neq \varepsilon$.
 Then there must 
exist a factor $w_i w_{i+1} \cdots w_j$ representing $1$ in $G$ such that
(i) $w_i w_{i+1} \cdots w_j \in A_k^+$ for some $k \in \{0,1\}$, 
(ii) either $i=1$ or $w_{i-1} \in A_{1-k}^+$, and (iii) either $j=m$ or $w_{j+1} \in A_{1-k}^+$.
The word $w' = w_1 \cdots w_{i-1} w_{j+1}\cdots w_m$ also represents $1$ in $G$.
By induction, $w'$ admits a cancellation $C'$. Let $C''$ be obtained from $C'$ by replacing every
occurrence of an index $k \geq i$ in $C'$ by $k+j-i+1$. Then  
$C = C'' \cup \{ [i,j] \}$ is a cancellation for $w$.

For the other direction let us call a partition $C \subseteq \Powerset{[1,m]}$ a \emph{weak cancellation}
if it is consistent, cancelling and well-nested (but not necessarily maximal). Then we show by
induction that $w$ represents $1$ if it has a weak cancellation. So, let $C$ be a weak cancellation of $w \neq \varepsilon$. 
Then there must exist an interval $[i,j] \in C$ (otherwise
$C$ would be not well-nested). Then $w_i w_{i+1} \cdots w_j$ represents $1$ in $G$. Consider the word 
$w' = w_1 \cdots w_{i-1} w_{j+1}\cdots w_m$. Let $C'$ be obtained from $C \setminus \{ [i,j] \}$ by replacing
every occurrence of an index $k \geq j+1$ in $C \setminus \{ [i,j] \}$ by $k-j+i-1$.
Then $C'$ is a weak cancellation for $w'$. Hence, $w'$ represents $1$, which implies that $w$ represents $1$.
\end{proof}
Of course, when showing that the solution set of \eqref{tameness-free-expeq}
has a polynomial magnitude, we may assume that $g_i\ne 1$ for any
$i\in[1,k]$.  Moreover, we lose no generality by assuming that all words $u_i$,
$i\in [1,k]$ and $v_i$, $i\in[0,k]$ are reduced. Furthermore, we may assume
that each $g_i$ is cyclically reduced.  Indeed, if some $g_i$ is not cyclically
reduced, we can write $g_i=f^{-1}gf$ for some cyclically reduced $g$ and
replace $h_{i-1}$, $g_i$, and $h_{i}$ by $h_{i-1}f^{-1}$, $g=fg_if^{-1}$, and
$fh_{i}$, respectively.  This does not change the solution set because
$h_{i-1}f^{-1}(fg_if^{-1})^{x_i}fh_{i}=h_{i-1}g_i^{x_i}h_i$.  Moreover, if we
do this replacement for each $g_i$ that is not cyclically reduced, we increase
the size of the instance by at most $2|g_1|+\cdots+2|g_k|\le 2n$ (note that
$|g|=|g_i|$). Applying this argument again, we may even assume that 
\begin{equation}
u_i\in A_0^+\cup A_1^+\cup A_0^+A^*A_1^+ \cup A_1^+A^*A_0^+ \label{tameness-free-format}
\end{equation}
for every $i\in[1,k]$. Note that $\lambda$ and $\rho$ are bijections on words of this form.

Consider a solution $(x_1,\ldots,x_k)$ to \labelcref{tameness-free-expeq}. Then the word
\begin{equation} w=v_0u_1^{x_1}v_1\cdots u_k^{x_k}v_k \label{tameness-free-sol1}\end{equation}
 represents $1$ in $G$.  We
factorize each $v_i$, $i\in[0,k]$, and each $u_i$, $i\in[1,k]$, into its
syllables. These factorizations define a factorization $w=w_1\cdots w_m$ and we
call this the \emph{block factorization} of $w$. 
This is the
coarsest refinement of the factorization $w=v_0u_1^{x_1}v_1\cdots u_k^{x_k}v_k$
and of $w$'s factorization into syllables.
The numbers $1, 2, \ldots, m$ are the \emph{blocks} of $w$.
We fix this factorization $w = w_1 \cdots w_m$ for the rest of this section.

\subsection*{Cycles and certified solutions} In the representation $v_0u_1^{\x_1}v_1\cdots
u_k^{\x_k}v_k=1$ of \labelcref{tameness-free-expeq}, the words $u_1,\ldots,u_k$
are called the \emph{cycles}. If $u_i\in A_0^+\cup A_1^+$, the cycle $u_i$ is
said to be \emph{simple} and otherwise \emph{mixed} (note that
$u_i=\varepsilon$ cannot happen because $g_i\ne 1$). 
Let $p$ be a block of $w$. If $w_p$ is contained in 
some $u_i^{x_i}$ for a cycle $u_i$, then $p$ is a \emph{$u_i$-blocks} or
\emph{block from $u_i$}. If $w_p$ is contained in 
some $v_i$, then $p$ is a \emph{$v_i$-block}
or a \emph{block from $v_i$}.  

Note that if $C\subseteq \Powerset{[1,m]}$ is a cancellation for $w = w_1 \cdots w_m$ then
by maximality, for each simple cycle $u_i$, all $u_i$-blocks are contained in the same edge of $C$.

A \emph{certified solution} is a pair $(x,C)$, where $x$ is a solution to \labelcref{tameness-free-expeq}
and $C$ is a cancellation of the word $w$ as in \labelcref{tameness-free-sol1}.

We will also need the following two auxiliary lemmas.
\begin{lem}\label{tameness-free-mutex}
Let $C$ be a cancellation. If $i,j$ are two distinct blocks from the same
mixed cycle, then there is no edge $I\in C$ with $i,j\in I$.
\end{lem}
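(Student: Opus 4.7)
The plan is to argue by contradiction: suppose two distinct blocks $i<j$ from a mixed cycle $u_\ell^{x_\ell}$ both lie in some edge $I\in C$. By consistency, $w_i,w_j\in A_s^+$ for some $s\in\{0,1\}$. Without loss of generality I may assume $I\cap(i,j)=\emptyset$, by replacing $j$ with the smallest element of $I$ larger than $i$ (the new pair is still contained in $u_\ell^{x_\ell}$, since $u_\ell^{x_\ell}$ is a contiguous substring of $w$). Since $u_\ell$ is cyclically reduced and in the form \labelcref{tameness-free-format} (so its first and last syllables lie in different alphabets), the syllables of $u_\ell^{x_\ell}$ strictly alternate between $A_0^+$ and $A_1^+$. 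In particular $w_i$ and $w_{i+1}$ are in different alphabets, so we must have $j\ge i+2$.

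Next I would use well-nestedness applied to $I$: since $I$ contains $i$ and $j$ with no element of $I$ strictly between them, any other edge $J\in C$ that meets $[i+1,j-1]$ must in fact be entirely contained in $[i+1,j-1]$ (otherwise $J$ would supply a pair crossing the pair $i<j$ from $I$). Together with the partition property, this means the edges of $C$ lying entirely inside $[i+1,j-1]$ partition $[i+1,j-1]$.

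The key step is then to check that this restricted family forms a valid cancellation of the subword $w':=w_{i+1}\cdots w_{j-1}$. Partition, consistency, cancelling, and well-nestedness are inherited directly from $C$. The only nontrivial point is \emph{maximality}: I would observe that maximality is vacuous here, because $w_{i+1},\ldots,w_{j-1}$ are consecutive syllables of $u_\ell^{x_\ell}$ and so alternate in alphabet, so no two adjacent blocks in $w'$ lie in the same $A_r^+$. This is the step that genuinely exploits the hypothesis that the cycle is mixed and cyclically reduced.

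Applying the preceding lemma (a word admitting a cancellation represents $1$), we obtain $w'=1$ in $G=G_0*G_1$. On the other hand, $w'$ is a nonempty factor of the reduced word $u_\ell^{x_\ell}$ (it is nonempty because $j\ge i+2$, and it is reduced because every syllable $s_t$ of $u_\ell$ is nontrivial and consecutive syllables alternate between $G_0$ and $G_1$). Hence $w'$ is a nonempty word in free-product normal form, and the normal form theorem for $G_0*G_1$ forbids it from representing $1$. This contradiction proves the lemma. The main obstacle is verifying the maximality clause for the restricted cancellation, which is precisely where the mixed/cyclically-reduced structure of $u_\ell$ enters the argument.
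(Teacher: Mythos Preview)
Your proof is correct, but it proceeds along a somewhat different route than the paper. The paper argues purely by minimality: choose $i<j$ in a common edge $I$ with $|i-j|$ minimal; since $u_\ell$ has alternating syllables we get $j\ge i+2$, so there is some $\ell\in(i,j)$ whose edge $J$ must (by well-nestedness) lie inside $[i,j]$, and since every edge has at least two elements (each block is a syllable of a reduced word, hence nontrivial), $J$ furnishes a strictly closer pair of blocks from the same mixed cycle, contradicting minimality. This is a three-line descent argument that never invokes the equivalence ``cancellation $\Leftrightarrow$ represents $1$'' or the normal form theorem for free products.

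Your argument instead \emph{uses} that equivalence: you restrict $C$ to the open interval $(i,j)$, verify that the restricted family is again a cancellation of $w'=w_{i+1}\cdots w_{j-1}$ (with maximality becoming vacuous thanks to the alternation of syllables in a mixed cycle), deduce via the preceding lemma that $w'$ represents $1$, and then derive a contradiction with the free-product normal form. This is a little longer but arguably more conceptual---it makes explicit that the obstruction is precisely the uniqueness of normal forms in $G_0*G_1$, whereas the paper's minimality trick hides this behind the fact that singleton edges are impossible. Both approaches ultimately rely on the same structural facts about mixed cycles (alternation of syllables, reducedness), just packaged differently.
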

\begin{proof}
Suppose there is such an $I\in C$. Furthermore, assume that $i$ and $j$ are
chosen so that $|i-j|$ is minimal and $i<j$. Since $i,j\in I$, we have $w_iw_j\in A_0^+\cup A_1^+$ by consistency of $C$. Hence, by
\labelcref{tameness-free-format}, we cannot have $j=i+1$.
Therefore there is an $\ell\in [1,m]$ with $i<\ell<j$. This means there is a $J\in
C$ with $\ell\in J$. By well-nestedness, $J\subseteq [i,j]$.  Since every
edge in $C$ must contain at least two elements, we have $|J|\ge 2$ and thus a
contradiction to the minimality of $|i-j|$.
\end{proof}

An edge $I\in C$ is called \emph{standard} if
$|I|=2$ and the two blocks in $I$ are from mixed cycles. Intuitively, the
following lemma tells us that in a  cancellation, most edges are standard.
\begin{lem}\label{tameness-free-nonstandard}
Let $C$ be a  cancellation and $u_i$ be a mixed cycle. Then there are at
most $n+3k+1$ non-standard edges $I\in C$ containing a $u_i$-block.
\end{lem}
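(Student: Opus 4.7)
The plan is to partition the non-standard edges $I\in C$ containing a $u_i$-block according to what makes them non-standard, and bound each class separately. First I would rule out $|I|=1$: a singleton edge $\{p\}$ would force the syllable $w_p$ to represent $1$, but $u_i$ is reduced, so no syllable of $u_i$ represents $1$. The remaining non-standard edges then split into three classes: (A) those containing some $v_j$-block; (B) those containing a simple-cycle block but no $v_j$-block; and (C) those all of whose blocks come from mixed cycles with $|I|\ge 3$.

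For class (A), since each block of $w$ lies in exactly one edge of $C$, the count is at most the total number of $v_j$-blocks, namely $\sum_{j=0}^k \|v_j\|\le n$. For class (B), the $x_j$ blocks making up a simple cycle $u_j\in A_s^+$ form a single maximal same-type run of $w$, so by maximality of the cancellation they all lie in one edge of $C$; hence (B) contributes at most one edge per simple cycle, for a bound of $k$.

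The main obstacle is bounding class (C) by $2k+1$. Each such edge contains a unique $u_i$-block inside the interval $[a_i,b_i]$ occupied by $u_i^{x_i}$, together with at least two further blocks outside $[a_i,b_i]$ from at least two distinct other mixed cycles (using \cref{tameness-free-mutex}). I would classify each (C)-edge as \emph{straddling}, \emph{left-only}, or \emph{right-only} depending on whether its outside blocks occur on both sides, only on the left, or only on the right of $[a_i,b_i]$. Well-nestedness forces each of the three sub-classes to be totally ordered by span-containment: any two straddling edges, for instance, have spans both containing $[a_i,b_i]$ and hence must be nested; an analogous argument handles the one-sided cases. The key step is then to show that along any such chain each outward step must recruit at least one new mixed cycle: \cref{tameness-free-mutex} forbids reusing the same cycle's blocks inside one edge, and well-nestedness together with the disjointness of the intervals $[a_j,b_j]$ prevents an outer edge from slipping onto the chain without introducing a cycle not yet touched by any inner member. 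Since there are only $k-1$ other mixed cycles, each of the three chains has length $O(k)$, giving the claimed bound $2k+1$ for (C); combining with (A) and (B) yields $n+3k+1$.

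The chain-length argument in (C) is the most delicate part, since it must exploit the interplay between well-nestedness and the precise location of $u_j$-blocks (with their alternating syllable types and disjoint occurrence intervals) carefully enough to forbid arbitrarily long chains that revisit the same outer mixed cycles.
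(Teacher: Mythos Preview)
Your plan mirrors the paper's proof. The paper also splits the non-standard edges into those meeting a simple cycle (at most $k$), those meeting some $v_j$ (at most $n$), and those containing only mixed-cycle blocks with $|I|>2$; it then partitions this last class $M$ into $M_-\setminus M_+$, $M_+\setminus M_-$, and $M_-\cap M_+$ (according to whether the non-$u_i$ blocks come from cycles with index below $i$, above $i$, or both), which are precisely your left-only, right-only, and straddling sub-classes. Your observation that each one-sided sub-class is totally ordered by span containment is correct, but it is not what carries the bound.

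The real gap is the step you yourself flag as delicate. Asserting that ``mutex plus well-nestedness plus disjointness of the $[a_j,b_j]$'' forces a new cycle at each outward step is not yet an argument; you must exhibit the crossing. The paper does this by setting $\mu(I)$ to be the largest $j$ such that $I$ contains a $u_j$-block and proving that $\mu$ is injective on the right-only class: if right-only edges $I_1\ne I_2$ have $\mu(I_1)=\mu(I_2)=j$ and $u_i$-blocks $r_1<r_2$, then $|I_1|>2$ yields a third block $p\in I_1$ from some $u_t$; by \cref{tameness-free-mutex} $t\notin\{i,j\}$, by right-onlyness $t>i$, and by maximality of $j$ we get $i<t<j$, whence the ordering of the cycle intervals forces $r_1<r_2<p<s_2$ (with $s_2$ the $u_j$-block of $I_2$), a crossing. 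Your sketch never isolates this ``third block from a strictly intermediate cycle,'' which is exactly the ingredient that produces the contradiction. A secondary accounting issue: three chains each of length $O(k)$ give only $3k$; to reach $2k+1$ you need the separate fact that there is at most one straddling edge, which the paper proves as $|M_-\cap M_+|\le 1$ by a short direct argument, not by the chain reasoning you apply uniformly to all three sub-classes.
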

\begin{proof}
Let $N\subseteq C$ be the set of all non-standard edges $I\in C$ that contain a $u_i$-block.
Then, each edge $I\in N$ satisfies one of the following.
\begin{enumerate}[label=(\roman*)]
\item $I$ contains a block from some simple cycle. There are at
most $k$ such $I$.
\item $I$ contains a block from some $v_j$, $j\in[0,k]$. Since
$\|v_0\|+\cdots+\|v_k\|\le n$, there are at most $n$ such $I$.
\item\label{tameness-free-nonstandard-mixed} $I$ contains only blocks from mixed cycles and $|I|>2$.
\end{enumerate}
Let $M\subseteq C$ be the set of edges of type
\labelcref{tameness-free-nonstandard-mixed}. If we can show that $|M|\le 2k+1$, then the
\lcnamecref{tameness-free-nonstandard} is proven.  Consider the sets
\begin{align*}
M_- &= \{ I\in M \mid \text{$I$ contains a block from a mixed cycle $u_j$, $j<i$} \}, \\
M_+ &= \{ I\in M \mid \text{$I$ contains a block from a mixed cycle $u_j$, $j>i$} \}.
\end{align*}
We shall prove that $|M_-\cap M_+|\le 1$ and that $|M_+\setminus M_-|\le k$. By
symmetry, this also means $|M_-\setminus M_+|\le k$ and thus $|M|=|M_- \cup
M_+|\le 2k+1$.

Suppose $I_1,I_2\in M_-\cap M_+$, $I_1\ne I_2$. Let $r\in I_1$ and $s\in I_2$
such that $r$ and $s$ are blocks from $u_i$, say with $r<s$. Since $I_1\in
M_+$, $I_1$ contains a block $r'$ from a mixed cycle $u_j$, $j>i$. This
means in particular $s<r'$. By well-nestedness, this implies $I_2\subseteq
[r,r']$, so that $I_2$ cannot contain a block from a mixed cycle $u_\ell$ with
$\ell<i$, contradicting $I_2\in M_-$.  Thus, $|M_-\cap M_+|\le 1$.

In order to prove $|M_+\setminus M_-|\le k$, we need another concept.  For each
$I\in M_+$, there is a maximal $j\in [1,k]$ such that $u_j$ is a mixed cycle
and $I$ contains a block from $u_j$. Let $\mu(I)=j$. We will show
$\mu(I_1)\ne\mu(I_2)$ for all $I_1,I_2\in M_+\setminus M_-$, $I_1\ne I_2$. This
clearly implies $|M_+\setminus M_-|\le k$.

Suppose $I_1,I_2\in M_+\setminus M_-$, $I_1\ne I_2$, with $\mu(I_1)=\mu(I_2)$.
Let $j=\mu(I_1)=\mu(I_2)$. Let $r$ be a block from $u_i$ contained in $I_1$
and let $r'$ be a block from $u_i$ contained in $I_2$. (Recall that those
exist because $I_1,I_2\in M$.) Without loss of generality, assume $r<r'$.
Moreover, let $s$ be a block from $u_j$ contained in $I_1$ and let $s'$
be a block from $u_j$ contained in $I_2$.  Thus, we have $r < r' < s'$.

However, we have $|I_1|>2$, meaning $I_1$ contains a block $p$ other than
$r$ and $s$.  Since an edge cannot contain two blocks of one mixed cycle
(\cref{tameness-free-mutex}), $p$ has to belong to a mixed cycle $u_t$
other than $u_i$ and $u_j$.  By the maximality of $j$, we have $i<t<j$. This
implies, however, $r < r'<p<s'$, which contradicts well-nestedness.
\end{proof}

\subsection*{Mixed periods}
From now on, for each $i\in[1,k]$, we use $e_i$ to denote the $i$-th unit
vector in $\N^k$, i.e. the vector with $1$ in the $i$-th coordinate and $0$
otherwise.  A \emph{mixed period} is a vector $\pi\in\N^k$ of the form
$\|u_j\|\cdot e_i + \|u_i\|\cdot e_j$, where $u_i$ and $u_j$ are mixed cycles.
Let $\Periods\subseteq\N^k$ be the set of
mixed periods.  Note that $|\Periods|\le k^2$. 

We will need a condition that guarantees that a given period $\pi\in \Periods$ can be
added to a solution $x$ to obtain another solution.  Suppose we have two blocks
$p$ and $q$ for which we know that if we insert a string $f_1$ to the left
of $w_p$ and a string $f_2$ to the right of $w_q$ and $f_1f_2$ cancels to $1$
in $G$, then the whole word cancels to $1$. Which string would we insert to the
left of $w_p$ and to the right of $w_q$ if we build the solution $x+\pi$?

Suppose $p$ is a $u_i$-block and $q$ is a $u_j$-block. Moreover, let $r$
be the first (left-most) $u_i$-block and let $s$ be the last (right-most)
$u_j$-block.  If we add $\|u_j\|\cdot e_i$ to $x$, this inserts
$\lambda^{p-r}(u_i^{\|u_j\|})$ to the left of $w_p$: Indeed, in the case $p=r$,
we insert $u_i^{\|u_j\|}$; and when $p$ moves one position to the right, the
inserted string is rotated once to the left. Similarly, if we add
$\|u_i\|\cdot e_j$ to $x$, we insert $\rho^{s-q}(u_j^{\|u_i\|})$ to the right of $w_q$:
This is clear for $q=s$ and decrementing $q$ means rotating the inserted string
to the right. This motivates the following definition.

\label{tameness-free-compatibility}
Let $(x,C)$ be a certified solution and let $u_i$ and $u_j$ be mixed cycles with $i<j$.
Moreover, let $r \in [1,m]$ be the left-most $u_i$-block and let $s \in [1,m]$ be the
right-most $u_j$-block. Then the mixed period $\pi=\|u_j\|\cdot e_i + \|u_i\|\cdot e_j$ is
\emph{compatible with $(x,C)$} if there are a $u_i$-block $p$ and a $u_j$-block $q$ 
such that
\begin{align}
\{p,q\}\in C \text{ and $\lambda^{p-r}(u_i^{\|u_j\|})\rho^{s-q}(u_j^{\|u_i\|})$ represents $1$ in $G$} . \label{tameness-free-compatibility-cond}
\end{align}
With $\CompPeriods(x,C)$, we denote the set of mixed periods that are
compatible with $(x,C)$.  One might wonder why we require an edge $\{p,q\}\in C$.
In order to guarantee that $\lambda^{p-r}(u_i^{\|u_j\|})$ and
$\rho^{s-q}(u_j^{\|u_i\|})$ can cancel, it would be sufficient to merely forbid
edges $I\in C$ that intersect $[p,q]$ and contain a block outside
of $[p-1,q+1]$. However, this weaker condition can become false when we insert
other mixed periods. Our stronger condition is preserved, which implies:

\begin{lem}\label{tameness-free-periods}
Let $(x,C)$ be a certified solution. Then every 
$x'\in x+\CompPeriods(x,C)^\oplus$ is a solution.
\end{lem}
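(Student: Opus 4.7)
The plan is to prove a stronger statement by induction on $\|y\|_1$ for $y \in \CompPeriods(x,C)^\oplus$: for every such $y$, the vector $x+y$ is a solution and moreover the corresponding word admits a cancellation $C_y$ with $\CompPeriods(x,C) \subseteq \CompPeriods(x+y,C_y)$. This strengthening is essential, because otherwise after adding one compatible period one would lose track of which remaining periods are still compatible with respect to the new cancellation, and the induction could not proceed past a single step.

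The base case $y = 0$ is immediate with $C_0 = C$. For the inductive step, write $y = y' + \pi$ with $\pi \in \CompPeriods(x,C)$ and $\|y'\|_1 < \|y\|_1$; by hypothesis there is $C_{y'}$ with $\pi \in \CompPeriods(x+y', C_{y'})$, so the whole argument reduces to a \emph{one-step lemma}: if $(z,D)$ is a certified solution and $\pi = \|u_j\|\cdot e_i + \|u_i\|\cdot e_j \in \CompPeriods(z,D)$ with witness $\{p,q\} \in D$, then $z + \pi$ is a solution admitting a cancellation $D'$ such that $\CompPeriods(z,D) \subseteq \CompPeriods(z+\pi, D')$. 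To prove it, I use that adding $\pi$ amounts to inserting the string $\lambda^{p-r}(u_i^{\|u_j\|})$ into the block factorization immediately to the left of $w_p$ and $\rho^{s-q}(u_j^{\|u_i\|})$ immediately to the right of $w_q$. By the compatibility condition, the concatenation of these two inserts represents $1$ in $G$, hence admits its own cancellation $E$. I then construct $D'$ by combining $D$ (with block indices shifted to account for the new blocks) with $E$, merging edges greedily in order to restore the maximality requirement.

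The main obstacle is the well-nestedness bookkeeping. The two inserts sit immediately outside the bracket $[p,q]$ of the edge $\{p,q\} \in D$, so the edges of $E$ can only connect blocks on the left of this bracket with blocks on the right (or stay entirely on one side), and each such edge is well-nested with respect to the shifted copy of $\{p,q\}$; hence $\{p,q\}$ survives in $D'$, and the inserted strings are correctly cancelled. The analogous argument shows that for every other witness edge $\{p'',q''\} \in D$ of a period in $\CompPeriods(z,D)$, the inserts triggered by $\pi$ lie, by well-nestedness of $D$, either entirely inside or entirely outside $[p'',q'']$, so $\{p'',q''\}$ remains an edge of $D'$ after the shift, and the corresponding period stays compatible. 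This is precisely the invariant pointed out in the paragraph preceding the statement: the definition of compatibility requires an actual edge $\{p,q\} \in D$, rather than the weaker non-crossing condition, exactly so that this edge survives iterated insertions of other compatible periods, which is what powers the induction.
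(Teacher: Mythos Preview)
Your proof follows the same strategy as the paper's: reduce to a one-step lemma asserting that for any certified solution $(z,D)$ and $\pi\in\CompPeriods(z,D)$ there is a certified solution $(z+\pi,D')$ with $\CompPeriods(z,D)\subseteq\CompPeriods(z+\pi,D')$, then iterate.

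Two remarks on your execution of the one-step lemma. First, taking an arbitrary cancellation $E$ of the inserted concatenation and then ``merging greedily for maximality'' is more than you need. Because $u_i,u_j$ are mixed and satisfy~\eqref{tameness-free-format}, the syllables of each inserted string strictly alternate between $A_0^+$ and $A_1^+$, so the \emph{only} cancellation of $\lambda^{p-r}(u_i^{\|u_j\|})\rho^{s-q}(u_j^{\|u_i\|})$ is the nested pairwise one (leftmost with rightmost, and so on inward). The paper uses this $E$ directly. With the pairwise $E$, no merging is ever required: at all four boundaries between old and inserted blocks one has a type change, so the union of the shifted $D$ with $E$ is already maximal. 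This matters, because a genuine merge could in principle absorb the two-element witness edge $\{p,q\}$ into a larger edge, after which your ``$\{p,q\}$ survives'' argument would no longer apply.

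Second, your verification of $\CompPeriods(z,D)\subseteq\CompPeriods(z+\pi,D')$ stops one step short. Survival of a witness edge $\{p'',q''\}$ in $D'$ is necessary but not sufficient: compatibility also demands that $\lambda^{p''-r''}(u_{i''}^{\|u_{j''}\|})\rho^{s''-q''}(u_{j''}^{\|u_{i''}\|})$ still represent $1$, where $r'',s''$ are now the leftmost and rightmost blocks in the \emph{new} word. After the insertion the offsets $p''-r''$ and $s''-q''$ may change (precisely when $\{i'',j''\}$ meets $\{i,j\}$), but only by a multiple of $\|u_i\|\cdot\|u_j\|$, hence by a multiple of the relevant $\|u_{i''}\|$ or $\|u_{j''}\|$. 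Since $\lambda^{\|u_{i''}\|}$ fixes $u_{i''}^K$, the rotated words are literally unchanged and compatibility persists. The paper records this observation explicitly (for $\pi$ itself here, and in full generality in the proof of \cref{tameness-free-removeperiods}); you should add it.
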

\begin{proof}
It suffices to show that if $(x,C)$ is a certified solution and
$\pi\in\CompPeriods(x,C)$, then there is a certified solution $(x',C')$ such
that $x'=x+\pi$ and $\CompPeriods(x,C)\subseteq \CompPeriods(x',C')$.  Suppose
$\pi=\|u_j\|\cdot e_i + \|u_i\|\cdot e_j \in\CompPeriods(x,C)$. Without loss of
generality, assume $i<j$. Let $r\in[1,m]$ be the left-most
$u_i$-block and $s\in[1,m]$ be the right-most $u_j$-block in $w$. 
Since $\pi\in\CompPeriods(x,C)$, there is a $u_i$-block $p$ and a $u_j$-block
$q$ such that \labelcref{tameness-free-compatibility-cond} holds. As
explained above, we can insert $\lambda^{p-r}(u_i^{\|u_j\|})$ on the left of
$w_p$ and $\rho^{s-q}(u_j^{\|u_i\|})$ on the right of $w_q$ and thus obtain a
word $w'$ that corresponds to the vector $x'=x+\pi$. 

Both inserted words consist of
$\|u_j\|\cdot\|u_i\|$ many blocks and they cancel to $1$, which means we can
construct a cancellation $C'$ from $C$ as follows. Between the two sequences of
inserted blocks, we add two-element edges so that the left-most inserted
$u_i$-block is connected to the right-most inserted $u_j$-block, and so forth.
The blocks that existed before are connected by edges as in $C$. It is clear
that then, $C'$ is a partition that is consistent, cancelling and maximal. Moreover,
since there is an edge $\{p,q\}$, the new edges between the inserted blocks do
not violate well-nestedness: If there were a crossing edge, then there would
have been one that crosses $\{p,q\}$.  

It remains to verify $\CompPeriods(x,C)\subseteq \CompPeriods(x',C')$. 
A mixed period $\pi' \in \CompPeriods(x,C) \setminus \{\pi\}$ is clearly contained
in $\CompPeriods(x',C')$ too. Hence, it remains to show $\pi \in  \CompPeriods(x',C')$.
This, however, follows from the fact that instead of the edge $\{p,q\}$ that witnesses
compatibility of $\pi$ with $(x,C)$, we can use its counterpart in $C'$; let us call this edge $\{p',q'\}$: 
If $r'$ is the left-most 
$u_i$-block in $w'$ and $s'$ is the right-most $u_j$-block in $w'$, then $p'-r' = p-r + \|u_i\| \cdot \|u_j\|$ and
$s'-q' = s-q + \|u_i\| \cdot \|u_j\|$. This implies 
$\lambda^{p-r}(u_i^{\|u_j\|})=\lambda^{p'-r'}(u_i^{\|u_j\|})$ and 
$\rho^{s-q}(u_j^{\|u_i\|})=\rho^{s'-q'}(u_j^{\|u_i\|})$ which implies that 
\labelcref{tameness-free-compatibility-cond} holds for the edge $\{p',q'\}$. This
completes the proof of the \lcnamecref{tameness-free-periods}.
\end{proof}

We shall need another auxiliary \lcnamecref{tameness-free-consecutive}.
\begin{lem}\label{tameness-free-consecutive}
Let $C$ be a cancellation for $w$. Let $u_i$ and $u_j$ be distinct mixed cycles.  Let
$D\subseteq C$ be the set of standard edges $I\in C$ that contain one block from
$u_i$ and one block from $u_j$.  Then the set 
$B = \{ p \in [1,m] \mid p \text{ is a $u_i$-block}, \exists I \in D : p \in I\}$
is an interval.
\end{lem}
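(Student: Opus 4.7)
The plan is to fix $u_i$-blocks $p_1<p_2$ in $B$, choose any $u_i$-block $p$ strictly between them, and show $p\in B$. I would first record two structural observations: (a) the $u_i$-blocks form a contiguous sub-interval of $[1,m]$, since they are exactly the blocks contained in the single factor $u_i^{\x_i}$ of $w$, and similarly for the $u_j$-blocks; (b) because every block is a syllable of a reduced factor, no block represents $1$ in $G_0$ or $G_1$, so by the cancelling condition every edge of $C$ has size at least $2$.

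Next I would use well-nestedness of $C$ to pin down where the edge $I\in C$ containing $p$ can live. Suppose first that $i<j$, so all $u_j$-blocks lie to the right of all $u_i$-blocks, and in particular $p_1<p_2<q_1,q_2$, where $I_1=\{p_1,q_1\}$ and $I_2=\{p_2,q_2\}$ are the standard edges of $D$ witnessing $p_1,p_2\in B$. The only non-crossing arrangement of $I_1$ and $I_2$ is then $p_1<p_2<q_2<q_1$ (the order $p_1<p_2<q_1<q_2$ is a crossing). Since $p\in(p_1,q_1)$, non-crossing of $I$ with $I_1$ forces $I\subseteq(p_1,q_1)$; since $p<p_2$, non-crossing of $I$ with $I_2$ forces $I\cap(p_2,q_2)=\emptyset$. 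Hence $I\subseteq(p_1,p_2)\cup(q_2,q_1)$. The case $i>j$ is symmetric and yields $q_2<q_1<p_1<p_2$ together with $I\subseteq(q_2,q_1)\cup(p_1,p_2)$.

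Finally I would combine observation (a) with \cref{tameness-free-mutex}. By (a), every block inside $(p_1,p_2)$ is a $u_i$-block and every block inside $(q_2,q_1)$ is a $u_j$-block. By \cref{tameness-free-mutex}, $I$ contains at most one $u_i$-block (which is forced to be $p$) and at most one $u_j$-block. Together with $|I|\ge 2$ from (b), this forces $I=\{p,q\}$ for some $u_j$-block $q\in(q_2,q_1)$. Hence $I$ is a standard edge belonging to $D$, so $p\in B$ as required.

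The main obstacle will be the well-nestedness bookkeeping in the second step: one must carefully verify that in both cases ($i<j$ and $i>j$) the non-crossing condition on $I_1,I_2$ forces the nested arrangement described above, and that the non-crossing conditions between $I$ and each of $I_1,I_2$ together localize $I$ to the two small intervals $(p_1,p_2)\cup(q_2,q_1)$. Once this localization is established, the mutex lemma and the size lower bound $|I|\ge 2$ yield the structural conclusion immediately.
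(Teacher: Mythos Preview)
Your proposal is correct and follows essentially the same approach as the paper's proof: both localize the edge through $p$ via well-nestedness with the two witnessing standard edges, then use \cref{tameness-free-mutex} together with the observation that edges have size at least~$2$ to conclude that this edge is itself standard and lies in $D$. The only cosmetic difference is that you argue directly (every $u_i$-block between two members of $B$ lies in $B$) whereas the paper phrases it as a minimal-counterexample contradiction; your explicit observation~(a) that the $u_i$-blocks form a contiguous interval is used implicitly in the paper when it infers that $t'\in[s_2,s_1]$ must be a $u_j$-block.
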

\begin{proof}
We prove the case $i<j$, the other follows by symmetry.  Suppose there are
$r_1,r_2\in B$ such that $r_1<r_2$ and there is no $t\in B$ with $r_1<t<r_2$.

Towards a contradiction, suppose $r_2-r_1>1$.  Since $r_1,r_2\in B$, there
are $I_1,I_2\in D$ with $r_1\in I_1$ and $r_2\in I_2$.  Let $I_1=\{r_1,s_1\}$
and $I_2=\{r_2,s_2\}$. Then $s_1$ and $s_2$ are $u_j$-blocks and by
well-nestedness, we have $r_1 < r_2 < s_2 < s_1$. Since $r_2-r_1>1$, there is
a $t$ with $r_1<t<r_2$ and therefore some $J\in C$ with $t\in J$. Since $|J|\ge
2$, there has to be a $t'\in J$, $t'\ne t$. However, well-nestedness dictates
that $t'\in[r_1,s_1]\setminus [r_2,s_2]$.  Since $J$ cannot contain another
block from $u_i$ (\cref{tameness-free-mutex}), we cannot have $t'\in
[r_1,r_2]$, which only leaves $t'\in[s_2,s_1]$. Hence, $t'$ is from
$u_j$.  By the same argument, any block $t''\in J\setminus\{t,t'\}$
must be from $u_i$ or $u_j$, contradicting \cref{tameness-free-mutex}. This
means $|J|=2$ and thus $t\in B$, in contradiction to the choice of $r_1$ and $r_2$.
\end{proof}

Let $M\subseteq [1,k]$ be the set of $i\in[1,k]$ such that $u_i$ is a mixed
cycle. We define a new norm on vectors $x\in\N^k$ by setting $\mixedNorm{x}=\max_{i\in M} x_i$.
\begin{lem}\label{tameness-free-removeperiods}
There is a polynomial $q$ such that the following holds.  For every certified
solution $(x,C)$ with $\mixedNorm{x}>q(n)$, there exists a mixed period $\pi\in
\Periods(x,C)$ and a certified solution $(x',C')$ such that $x'=x-\pi$
and $\CompPeriods(x,C)\subseteq\CompPeriods(x',C')$.
\end{lem}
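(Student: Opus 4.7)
My plan is to identify a long nested interval of standard edges linking two mixed cycles $u_{i^*}$ and $u_{j^*}$ in $(x, C)$, and then remove an interior window of $N := \|u_{i^*}\|\|u_{j^*}\|$ such edges to obtain $(x', C')$. Choose $i^* \in M$ with $x_{i^*} = \mixedNorm{x}$. The $u_{i^*}^{x_{i^*}}$ segment of $w$ contributes $x_{i^*}\|u_{i^*}\|$ many $u_{i^*}$-blocks; by \cref{tameness-free-nonstandard} and \cref{tameness-free-mutex}, all but at most $n + 3k + 1$ of them lie in standard edges, each pairing the block with a block from some other mixed cycle. Applying the pigeonhole principle over the at most $|M| - 1$ possible mixed partners produces a mixed cycle $u_{j^*}$ such that at least $\ell := (x_{i^*}\|u_{i^*}\| - (n+3k+1))/(|M|-1)$ many $u_{i^*}$-blocks are paired via standard edges with $u_{j^*}$-blocks. (If $|M| = 1$, no standard edge contains a $u_{i^*}$-block, so \cref{tameness-free-mutex,tameness-free-nonstandard} directly bound $x_{i^*}\|u_{i^*}\| \le n+3k+1$, contradicting $\mixedNorm{x} > q(n)$ for any $q(n) \ge 4n+1$.) By \cref{tameness-free-consecutive}, these blocks form an interval $p_1 < p_2 < \cdots < p_\ell$ of consecutive $u_{i^*}$-blocks, and well-nestedness forces the matching $u_{j^*}$-blocks to be consecutive in reverse order: $q_1 > q_2 > \cdots > q_\ell$.

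Set $\pi := \|u_{j^*}\|e_{i^*} + \|u_{i^*}\|e_{j^*}$, and choose $q$ so that $\mixedNorm{x} > q(n)$ guarantees $\ell \geq N + 2$; since $\|u_{i^*}\|, \|u_{j^*}\|, |M|, k \leq n$, some $q(n) = O(n^3)$ suffices. Take $t := N + 2$, designate the removal window as the pairs $(p_\tau, q_\tau)$ for $2 \leq \tau \leq N+1$, and keep $(p_{N+2}, q_{N+2})$ as the witness pair. Define $x' := x - \pi$ and let $w' := v_0 u_1^{x'_1} v_1 \cdots u_k^{x'_k} v_k$; since $N$ is a multiple of both $\|u_{i^*}\|$ and $\|u_{j^*}\|$, deleting the $N$ consecutive $u_{i^*}$-blocks $p_2, \ldots, p_{N+1}$ from $w$ yields, at the level of $A^*$-words, exactly the substitution of $u_{i^*}^{x'_{i^*}}$ for $u_{i^*}^{x_{i^*}}$, and likewise for $u_{j^*}$. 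Define $C' := C \setminus \{\{p_\tau, q_\tau\} : 2 \leq \tau \leq N+1\}$. \cref{tameness-free-mutex} ensures the removed blocks appear in no other edges, so $C'$ partitions the blocks of $w'$; consistency, cancellation, and well-nestedness are inherited. For maximality, the choice $t = N + 2$ makes the window strictly interior in both $u_{i^*}^{x_{i^*}}$ and the corresponding $u_{j^*}$-block interval, so the two new adjacencies lie between retained $u_{i^*}$- (resp.\ $u_{j^*}$-) blocks at cyclic positions differing by $1 \bmod \|u_{i^*}\|$ (resp.\ $\bmod \|u_{j^*}\|$), which alternate groups.

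For $\pi \in \CompPeriods(x, C)$, I use the witness $(p, q) := (p_{N+2}, q_{N+2})$: expanding $\lambda^{p-r}(u_{i^*}^{\|u_{j^*}\|})$ and $\rho^{s-q}(u_{j^*}^{\|u_{i^*}\|})$ into syllable sequences and reducing the concatenation from the middle outward, the $k$-th step (for $k = 0, \ldots, N-1$) reduces to the syllable identity $w_{p_{N+1-k}} \cdot w_{q_{N+1-k}} = 1$, which holds because $\{p_{N+1-k}, q_{N+1-k}\}$ is a standard edge of $C$; the alignment uses $r_{j^*} \equiv s + 1 \pmod{\|u_{j^*}\|}$, where $r_{j^*}$ denotes the leftmost $u_{j^*}$-block. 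For $\CompPeriods(x, C) \subseteq \CompPeriods(x', C')$, given $\pi' \in \CompPeriods(x, C)$ with witness $(p', q') \in C$: if $(p', q')$ survives in $C'$, it remains a witness, since the block-numbering shift induced by the removal is a multiple of both $\|u_{i^*}\|$ and $\|u_{j^*}\|$, leaving $p'-r_a$ and $s_b-q'$ unchanged modulo the relevant cycle lengths; otherwise, \cref{tameness-free-mutex} forces $\pi' = \pi$, and $(p_{N+2}, q_{N+2}) \in C'$ serves via the first part.

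The principal hurdle will be the syllable-level verification of $\lambda^{p-r}(u_{i^*}^{\|u_{j^*}\|}) \rho^{s-q}(u_{j^*}^{\|u_{i^*}\|}) = 1$: tracking cyclic offsets through $\lambda$ and $\rho$, matching the $k$-th cancellation step to the nested pair at index $N+1-k$, and applying the modular identity to align $u_{j^*}$-positions. Once this index bookkeeping is carried out, the remaining verifications (the $A^*$-level substitution yielding $w'$, well-nestedness after removing a nested chain of edges, and the maximality check) follow routinely from the careful choice $t = N+2$ that places the removal window strictly in the interior.
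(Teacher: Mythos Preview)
Your proposal is correct and follows essentially the same approach as the paper's proof: locate a mixed cycle $u_{i^*}$ with large exponent, use pigeonhole together with \cref{tameness-free-mutex,tameness-free-nonstandard,tameness-free-consecutive} to find a long interval of consecutive standard edges pairing $u_{i^*}$-blocks with $u_{j^*}$-blocks, excise a window of $N=\|u_{i^*}\|\cdot\|u_{j^*}\|$ such pairs, and use an adjacent surviving pair as the compatibility witness. The only differences are cosmetic---the paper requires the interval to have size $>n^2\ge N$ rather than your $\ell\ge N+2$, and it phrases the compatibility check as the direct identification $\lambda^{p-r}(u_{i^*}^{\|u_{j^*}\|})=w_{p'}\cdots w_{p-1}$, $\rho^{s-q}(u_{j^*}^{\|u_{i^*}\|})=w_{q+1}\cdots w_{q'}$ rather than your middle-out cancellation.
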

\begin{proof}
We show that the \lcnamecref{tameness-free-removeperiods} holds if $q(n)\ge
(n+3k+1) + kn^2$.  (Recall that $k\le n$.) Let $(x,C)$ be a certified
solution with $\mixedNorm{x}>q(n)$.  Then there is a mixed cycle $u_i$ such
that $x_i>q(n)$ and hence $u_i^{x_i}$ consists of more than $q(n)$ blocks. Let
$D\subseteq C$ be the set of all edges $I\in C$ that contain a block
from $u_i$.  Since an edge can contain at most one block per mixed cycle 
(\cref{tameness-free-mutex}), we have $|D|>q(n)$.
Hence, \cref{tameness-free-nonstandard} tells us
that $D$ contains more than $k n^2$ standard edges. Hence, there exists a
mixed cycle $u_j$ such that the set $E\subseteq D$ of standard edges $I\in D$ that 
consist of one block from $u_i$ and one block from $u_j$
satisfies $|E| > n^2$.  If $B_i$
(resp., $B_j$) denotes the set of blocks from $u_i$ (resp., $u_j$) contained in some edge $I\in
E$, then each of the sets $B_i$ and $B_j$
has to be an interval (\cref{tameness-free-consecutive}) of size more than $n^2$.

We only deal with the case $i<j$, the case $i>j$ can be done similarly.
Let us take a subinterval $[p',p]$ of $B_i$ such that $p-p' =  \|u_i\|\cdot \|u_j\| \leq n^2$.
 By well-nestedness and since $B_j$ is an interval, the neighbors (with respect to the edges from $E$) of $[p',p]$ form an interval $[q,q'] \subseteq B_j$
as well, and we have $p-p'=q'-q=\|u_i\|\cdot \|u_j\|$.
Moreover, we have an edge $\{p-\ell, q+\ell\} \in E$ for each
$\ell\in[0,p-p']$.  In particular, $w_{p'}w_{p'+1}\cdots w_{p-1}w_{q+1}\cdots
w_{q'}$ represents $1$ in $G$.

Let $r$ be the left-most $u_i$-block and let $s$ be the right-most
$u_j$-block.  Then, as shown before the definition of compatibility
(p.~\pageref{tameness-free-compatibility}), we have
\begin{align*}
\lambda^{p-r}(u_i^{\|u_j\|}) = w_{p'}w_{p'+1}\cdots w_{p-1}, && \rho^{s-q}(u_j^{\|u_i\|}) = w_{q+1}w_{q+1}\cdots w_{q'}.
\end{align*}
Therefore, 
$\lambda^{p-r}(u_i^{\|u_j\|})\rho^{s-q}(u_j^{\|u_i\|})$
represents $1$ in $G$ and $\{p,q\}$ witnesses compatibility of
$\pi=\|u_j\|\cdot e_i+\|u_i\|\cdot e_j$ with $(x,C)$. Hence,
$\pi\in\CompPeriods(x,C)$.

Let $x'=x-\pi$. We remove the factors $w_{p'} \cdots w_{p-1}$ and
$w_{q+1}\cdots w_{q'}$ from $w$. Then, the remaining blocks spell
$w'=v_0u_1^{x'_1}v_1\cdots u_k^{x'_k}v_k$.
Indeed, recall that removing from a word $y^t$ any factor of length $\ell\cdot
|y|$ will result in the word $y^{t-\ell}$. Moreover, let $C'$ be the set of
edges that agree with $C$ on the remaining blocks. By the choice of the
removed blocks, it is clear that $C'$ is a cancellation for $w'$. Hence,
$(x',C')$ is a certified solution.

It remains to verify $\CompPeriods(x,C)\subseteq\CompPeriods(x',C')$.  
First note that for every mixed cycle $u_\ell$, all 
$u_\ell$-blocks that remain in $w'$ change their position relative to the left-most and the
right-most $u_\ell$-block by a difference that is divisible by $\|u_\ell\|$ (if $i \neq \ell \neq j$ 
then these relative positions do not change at all). 
Note that the expression $\lambda^{p-r}(u_i^{\|u_j\|})$ is not altered when
$p-r$ changes by a difference divisible by $\|u_i\|$, and an analogous
fact holds for $\rho^{s-q}(u_j^{\|u_i\|})$.  
Hence, the edge in $C'$ that corresponds
to the $C$-edge $\{p,q\}$ is a witness for  $\pi \in \CompPeriods(x',C')$.
Moreover, for all other mixed periods $\pi' \in \CompPeriods(x,C) \setminus \{\pi\}$ that are witnessed
by an edge $\{t,u\} \in C$, the blocks $t$ and $u$ do not belong to $[p',p-1] \cup [q+1,q']$.
Therefore, the corresponding edge in $C'$ exists and serves as a witness for 
$\pi' \in \CompPeriods(x',C')$. 
\end{proof}

Repeated application of \cref{tameness-free-periods} now yields:
\begin{lem}\label{tameness-free-mixed}
There exists a polynomial $q$ such that the following holds.
For every solution $x\in\N^k$, there exists a certified solution
$(x',C')$ such that $\mixedNorm{x'} \leq q(n)$ and $x\in
x'+\CompPeriods(x',C')^\oplus$.
\end{lem}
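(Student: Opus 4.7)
The plan is to iterate \cref{tameness-free-removeperiods} and track how the removed periods accumulate.

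First, from the given solution $x \in \N^k$ I would produce an initial certified solution $(x, C_0)$. The word $w = v_0 u_1^{x_1} v_1 \cdots u_k^{x_k} v_k$ represents $1$ in $G = G_0 * G_1$, so by the lemma on cancellations stated just after the definition of a cancellation it admits some cancellation $C_0$; this $(x, C_0)$ is the starting certified solution.

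Next I would run a descent. Set $(x^{(0)}, C^{(0)}) := (x, C_0)$. As long as $\mixedNorm{x^{(i)}} > q(n)$ (with $q$ the polynomial of \cref{tameness-free-removeperiods}), apply that \lcnamecref{tameness-free-removeperiods} to obtain a mixed period $\pi^{(i)} \in \CompPeriods(x^{(i)}, C^{(i)})$ and a new certified solution $(x^{(i+1)}, C^{(i+1)})$ with $x^{(i+1)} = x^{(i)} - \pi^{(i)}$ and
\[ \CompPeriods(x^{(i)}, C^{(i)}) \subseteq \CompPeriods(x^{(i+1)}, C^{(i+1)}). \]
The key observation that makes this terminate is that every mixed period has the form $\|u_j\| e_i + \|u_i\| e_j$ with $i, j \in M$ and $\|u_i\|, \|u_j\| \ge 1$, so the nonnegative integer $\sum_{j \in M} x^{(i)}_j$ strictly decreases at each step. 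Hence there is a first index $N$ with $\mixedNorm{x^{(N)}} \le q(n)$; set $(x', C') := (x^{(N)}, C^{(N)})$.

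It remains to collect the periods. Telescoping gives $x = x' + \sum_{i=0}^{N-1} \pi^{(i)}$. Because the inclusion $\CompPeriods(x^{(i)}, C^{(i)}) \subseteq \CompPeriods(x^{(i+1)}, C^{(i+1)})$ is preserved along the whole chain, each $\pi^{(i)}$ lies in $\CompPeriods(x^{(N)}, C^{(N)}) = \CompPeriods(x', C')$. Therefore $x \in x' + \CompPeriods(x', C')^\oplus$, as required.

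The one point that needs a little care, rather than being a real obstacle, is the monotonicity $\CompPeriods(x^{(i)}, C^{(i)}) \subseteq \CompPeriods(x^{(i+1)}, C^{(i+1)})$ along the full descent: this is exactly the content of \cref{tameness-free-removeperiods}, and since set inclusion is transitive it carries through all $N$ steps. Everything else is just bookkeeping.
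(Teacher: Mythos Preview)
Your proposal is correct and follows essentially the same approach as the paper: start from a certified solution for $x$, iterate \cref{tameness-free-removeperiods}, and use the monotonicity of $\CompPeriods$ along the descent to place all removed periods into $\CompPeriods(x',C')$. Your explicit termination argument via the strictly decreasing quantity $\sum_{j\in M} x^{(i)}_j$ is a detail the paper leaves implicit, but otherwise the two proofs coincide.
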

\begin{proof}
Let $q$ be the polynomial provided by \cref{tameness-free-removeperiods}.
Since $x$ is a solution, there is a certified solution $(x,C)$.  Repeated
application of \cref{tameness-free-removeperiods} yields certified solutions
$(x_0,C_0),\ldots,(x_m,C_m)$ and mixed periods $\pi_1,\ldots,\pi_m$ such that
$(x_0,C_0)=(x,C)$, $\pi_i\in\CompPeriods(x_{i-1},C_{i-1}) \subseteq\CompPeriods(x_i,C_i)$,
$x_{i}=x_{i-1}-\pi_i$, and $\mixedNorm{x_m} \leq q(n)$. In
particular, $\CompPeriods(x_m,C_m)$ contains each $\pi_i$ and hence
\[x=x_m+\pi_1+\cdots+\pi_m\in x_m+\CompPeriods(x_m,C_m)^\oplus. \]
Thus, $(x',C')=(x_m,C_m)$ is the desired certified solution.
\end{proof}

We are now ready to prove \cref{tameness-free-preserved} and thus \cref{tforest-tame}.
\begin{proof}[Proof of \cref{tameness-free-preserved}]
Suppose that $p_0$ and $p_1$ are the polynomials guaranteed by the knapsack
tameness of $G_0$ and $G_1$, respectively.  Recall that $S\subseteq\N^k$ is the
set of solutions to~\labelcref{tameness-free-expeq}.  We prove that there exists a
polynomial $p$ such that for every $x \in S$
there is a semilinear set $S'\subseteq\N^k$ of
magnitude at most $p(n)$ such that $x\in S'\subseteq S$.  This clearly implies
that $S$ has magnitude at most $p(n)$.
First, we apply \cref{tameness-free-mixed}. It yields a polynomial $q$ and a
certified solution $(x',C')$ with $\mixedNorm{x'}\le q(n)$ such that $x\in
x'+\CompPeriods(x',C')^\oplus$.  Let
$w' = v_0u_1^{x'_1}v_1\cdots u_k^{x'_k}v_k$
and consider $w'$ decomposed into blocks as we did above with $w$.

Let $T\subseteq [1,k]$ be the set of all $i\in[1,k]$ for which the cycle
$u_i$ is simple. Since $C'$ is maximal, for each $i\in T$, all $u_i$-blocks
are contained in one edge $I_i\in C'$. Note that it is allowed that one edge
contains the blocks of multiple simple cycles.  We partition $ T$ into
sets $ T= T_1\uplus\cdots\uplus T_t$ so that $i\in T$ and
$j\in T$ belong to the same part if and only if the $u_i$-blocks and the
$u_j$-blocks belong to the same edge of $C$, i.e. $I_i=I_j$.

For a moment, let us fix an $\ell\in[1,t]$ and let $I\in C'$ be the edge
containing all $u_i$-blocks for all the $i\in  T_\ell$. Moreover, let
$ T_\ell=\{i_1,\ldots,i_r\}$. The words $\bar{v}_j$ for $j\in[0,r]$ will
collect those blocks that belong to $I$ but are not $u_{i_s}$-blocks for any
$s\in[1,r]$.  Formally, $\bar{v}_0$ consists of all blocks that belong to $I$
that are to the left of all $u_{i_1}$-blocks. Similarly, $\bar{v}_r$ is the
concatenation of all blocks belonging to $I$ that are to the right of all
$u_{i_r}$-blocks. Finally, for $j\in[1,r-1]$, $\bar{v}_j$ consists of all
blocks that belong to $I$ and are to the right of all $u_{i_j}$-blocks and to
the left of all $u_{i_{j+1}}$-blocks.
By consistency of $C'$, for some $s\in\{0,1\}$, all the words $\bar{v}_j$ for
$j\in[0,r]$ and the words $u_{i_j}$ for $j\in [1,r]$ belong to $A_s^*$ and thus
represent elements of $G_s$. Since $G_s$ is knapsack tame, we know that the set
\[ S_\ell=\{ z\in\N^k \mid \bar{v}_0 u_{i_1}^{z_{i_1}} \bar{v}_1 u_{i_2}^{z_{i_2}} \bar{v}_2\cdots u_{i_r}^{z_{i_r}}\bar{v}_r~\text{represents $1$ in $G_s$,~~ $z_j=0$ for $j\notin  T_\ell$}\}\] 
has magnitude at most $p_s(n)$. Consider the
vector $y\in\N^k$ with $y_i=0$ for $i\in T$ and $y_i=x'_i$ for $i\in
[1,k]\setminus T$ (i.e. when $u_i$ is a mixed cycle). We claim that
$S' = y + S_1 + \cdots S_t + \CompPeriods(x',C')^\oplus$
has magnitude at most $q(n)+p_0(n)+p_1(n)+n$ and satisfies $x\in S'\subseteq
S$.  

First, since $y$ and the members of $S_1,\ldots,S_t$ are non-zero on pairwise
disjoint coordinates, the magnitude of $y+S_1+\cdots+S_t$ is the maximum of
$\|y\|$ and the maximal magnitude of $S_1,\ldots,S_\ell$.  Hence, it is
bounded by $q(n)+p_0(n)+p_1(n)$. The summand $\CompPeriods(x',C')^\oplus$ contributes
only periods, and their magnitude is bounded by $n$ (recall that they are mixed
periods). Thus, the magnitude of $S'$ is at most $p(n)=q(n)+p_0(n)+p_1(n)+n$.

The cancelling property of $(x',C')$ tells us that $x'-y$ is contained in
$S_1+\cdots+S_t$.  By the choice of $(x',C')$, we have $x\in
x'+\CompPeriods(x',C')^\oplus$.  Together, this means $x\in S'$. Hence, it
remains to show $S'\subseteq S$. To this end, consider a vector $x''\in
y+S_1+\cdots+S_t$.  It differs from $x'$ only in the exponents at simple
cycles. Therefore, we can apply essentially the same cancellation to $x''$ as
to $x'$: we just need to adjust the edges containing the blocks of simple
cycles. It is therefore clear that the resulting cancellation $C''$ has the
same compatible mixed periods as $C'$: $\CompPeriods(x'',C'')=\CompPeriods(x',C')$.
Thus, by \cref{tameness-free-periods}, we have $x''+\CompPeriods(x',C')^\oplus
\subseteq S$. This proves $S' = y+S_1+\cdots+S_t+\CompPeriods(x',C')^\oplus\subseteq
S$ and hence the \lcnamecref{tameness-free-preserved}.
\end{proof}

\subsection{$\LogCFL$-membership}
In this section, we prove the upper bound in \cref{logcfl-completeness}:

\begin{prop}\label{tforest-logcfl-membership}
Let $G=\dG(A,I)$ be a graph group where $(A,I)$ is a transitive forest. Then
the knapsack problem and the subset sum problem for $G$ belong to $\LogCFL$.
\end{prop}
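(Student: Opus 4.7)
The plan is to reduce knapsack (and subset sum) for $G=\dG(A,I)$ in logspace to the membership problem for acyclic automata over $G$, so that \cref{thm-acyclic-logcfl} finishes the job. The key enabler is \cref{tforest-tame}, which provides a polynomial $p$ such that every solvable knapsack instance of size $n$ admits a solution with each exponent bounded by $p(n)$.

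Given an instance $g_1^{\x_1}g_2^{\x_2}\cdots g_k^{\x_k}=g$ of size $n$, with each $g_i$ and $g$ represented by words $u_i,v\in (A^{\pm 1})^*$, I would build an acyclic automaton $\mathcal{A}$ of polynomial size accepting exactly $\{u_1^{x_1}u_2^{x_2}\cdots u_k^{x_k}v^{-1} : 0\le x_i\le p(n)\text{ for all }i\}$. For each $i\in[1,k]$, the gadget consists of states $q_{i,0},q_{i,1},\ldots,q_{i,p(n)}$ with, for every $j\in[0,p(n)-1]$, two parallel transitions from $q_{i,j}$ to $q_{i,j+1}$: one labeled with the word $u_i$ and one labeled with $\varepsilon$. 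Any path from $q_{i,0}$ to $q_{i,p(n)}$ thus spells out $u_i^{x_i}$ for some $x_i\in[0,p(n)]$. These gadgets are concatenated by identifying $q_{i,p(n)}$ with $q_{i+1,0}$, and a final transition labeled $v^{-1}$ leads to the accepting state. The resulting automaton is manifestly acyclic, and its size is polynomial in $n$.

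By \cref{tforest-tame}, the knapsack instance is solvable in $G$ if and only if some word in $L(\mathcal{A})$ equals $1$ in $G$, i.e.\ iff $\mathcal{A}$ is a positive instance of the membership problem for acyclic automata over $G$. To see that the reduction is logspace, observe that $p$ is a fixed polynomial depending only on $G$, so a logspace counter can enumerate the indices of the states $q_{i,j}$ and output the transition descriptions one by one; copying the words $u_i$ and $v$ from the input as transition labels is also a logspace task. Combining this with \cref{thm-acyclic-logcfl} and the closure of $\LogCFL$ under logspace reductions yields the $\LogCFL$ upper bound for knapsack. For subset sum, the identical construction with $p(n)=1$ (so each gadget reduces to a single pair of parallel transitions labeled $u_i$ and $\varepsilon$) works directly, without even invoking \cref{tforest-tame}.

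The only subtlety worth flagging is that \cref{tforest-tame} a priori bounds only the magnitude of a semilinear representation of the solution set rather than a specific solution; however, the base points of such a representation are themselves solutions, so a solvable instance automatically has a solution of magnitude at most $p(n)$. Apart from this observation, the proof is a routine combination of the tameness theorem and the acyclic automaton result from the previous subsections.
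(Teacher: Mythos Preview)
Your proof is correct and follows essentially the same approach as the paper: reduce to the membership problem for acyclic automata by building a polynomial-size acyclic automaton whose paths enumerate all exponent tuples with entries bounded by $p(n)$, invoking \cref{tforest-tame} for the bound and \cref{thm-acyclic-logcfl} for the $\LogCFL$ upper bound. Your explicit remark that the base vectors of the semilinear representation are themselves solutions (so a bound on the magnitude yields a bound on some solution) is a useful clarification that the paper leaves implicit.
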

\begin{proof}
Let us consider knapsack.
According to \cref{thm-acyclic-logcfl}, it suffices to provide a logspace reduction from the
knapsack problem over $G$ to the membership problem for acyclic automata over
$G$.  Suppose we have an instance $h_0g_1^{\x_1}h_1\cdots g_k^{\x_k} h_k=1$ of
the knapsack problem over $G$ of size $n$. Moreover, let $h_i$ be represented
by $v_i\in A^*$ for each $i\in[0,k]$ and let $g_i$ be represented by $u_i\in
A^*$ for $i\in[1,k]$.

By \cref{tforest-tame}, there is a polynomial $p$ such that the
above instance has a solution if and only if it has a solution $x\in\N^k$ with
$\|x\| \le p(n)$. We construct an acyclic automaton
$\cA=(Q,A,\Delta,q_0,q_f)$ as follows. It has the state set $Q=[0,k+1]\times
[0,p(n)]$ and the following transitions. From $(0,0)$, there is one transition
labeled $v_0$ to $(1,0)$. For each $i\in[1,k]$ and $j\in[0,p(n)-1]$, there are
two transitions from $(i,j)$ to $(i,j+1)$; one labeled by $u_i$ and one labeled
by $\varepsilon$. Furthermore, there is a transition from $(i,p(n))$ to
$(i+1,0)$ labeled $v_i$ for each $i\in[1,k]$. The initial state is $q_0=(0,0)$
and the final state is $q_f=(k+1,0)$.  

It is clear that $\cA$ accepts a word that represents $1$ if and only if the
exponent equation has a solution.  Finally, the reduction can clearly be
carried out in logarithmic space. 

For subset sum the same reduction as above works but the polynomial bound on solutions is for free.
\end{proof}

\subsection{$\LogCFL$-hardness}

It remains to show the lower bound
in \cref{logcfl-completeness}.  If $(A,I)$ is not complete, then $(A,I)$ contains two
non-adjacent vertices and thus $\dG(A,I)$ contains an isomorphic copy of $F_2$,
the free group of rank two. Hence, we will show that knapsack and subset sum for $F_2$ are
$\LogCFL$-hard.  Let $\{a,b\}$ be a generating set for $F_2$.  Let $\theta
\colon \{a,b,a^{-1},b^{-1}\}^* \to F_2$ be the morphism that maps a word $w$ to
the group element represented by $w$. 

A \emph{valence automaton} over a group
$G$ is a tuple $\cA=(Q,\Sigma,\Delta,q_0,q_f)$ where $Q$, $\Sigma$, $q_0$, $q_f$
are as in a finite automaton and $\Delta$ is a finite subset of
$Q\times\Sigma^*\times G\times Q$. The \emph{language accepted by $\cA$} is denoted $L(\cA)$ and consists of
all words $w_1\cdots w_n$ such that there is a computation
$p_0\xrightarrow{w_1,g_1} p_1 \to\cdots\to p_{n-1}\xrightarrow{w_n,g_n} p_n$
such that $(p_{i-1}, w_i, g_i, p_i)\in\Delta$ for $i\in[1,n]$, $p_0=q_0$, $p_n=q_f$, and $g_1\cdots g_n=1$ in $G$.
We call this computation also an {\em accepting run} of $\mathcal{A}$ for $w$ (of length $n$). Note that we allow $\varepsilon$-transitions of the form
$(p,\varepsilon,g,q) \in \Delta$. This implies that an accepting run for a word $w$ can be of length 
greater than $|w|$.

An analysis of a proof (in this case \cite{Kambites2009}) of the Chomsky-Sch\"{u}tzenberger theorem yields:
\begin{lem} \label{lemma-kambites}
For every  language $L \subseteq \Sigma^*$ 
the following statements are equivalent:
\begin{enumerate}[label=(\roman*)]
\item $L$ is context-free.
\item There is a valence automaton $\mathcal{A}$ over $F_2$ such that $L = L(\mathcal{A})$.
\item There is a valence automaton $\mathcal{A}$ over $F_2$ and a constant $c \in \N$ such that $L = L(\mathcal{A})$ and
for every $w \in L$ there exists an accepting run of $\mathcal{A}$ for $w$ of length at most $c \cdot |w|$.
\end{enumerate}
\end{lem}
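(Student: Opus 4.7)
The plan is to prove $(\text{iii}) \Rightarrow (\text{ii})$ trivially, $(\text{ii}) \Rightarrow (\text{i})$ by a direct pushdown simulation, and $(\text{i}) \Rightarrow (\text{iii})$ by invoking the Chomsky--Sch\"utzenberger theorem in a form that gives an explicit linear bound on the preimage length. Since the equivalence of (i) and (ii) is essentially the main theorem of \cite{Kambites2009}, the only new ingredient is the linear length bound in (iii), which will fall out of the standard construction once we insist on a grammar in Chomsky normal form.

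For $(\text{ii}) \Rightarrow (\text{i})$ I would simulate the valence automaton $\mathcal{A}$ over $F_2$ by a nondeterministic pushdown automaton $\mathcal{P}$. The stack of $\mathcal{P}$ stores the freely reduced word in $\{a,b,a^{-1},b^{-1}\}^*$ representing the product of the group elements along the transitions taken so far. Each transition $(p, w, g, q) \in \Delta$ of $\mathcal{A}$ becomes a finite gadget in $\mathcal{P}$ that reads $w$ from the input and then performs a bounded (depending on $|g|$) number of push/pop operations to multiply the stored word by $g$ on the right while maintaining free reduction. The PDA accepts by final state and empty stack, which by construction corresponds exactly to an accepting run of $\mathcal{A}$.

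For $(\text{i}) \Rightarrow (\text{iii})$ I would start from a CFG $\mathcal{G}$ for $L$ in Chomsky normal form. The Chomsky--Sch\"utzenberger construction, as carried out in \cite{Kambites2009}, yields a bracket alphabet $\Delta$, a regular language $R \subseteq \Delta^*$, and an alphabetic morphism $h \colon \Delta^* \to \Sigma^*$ such that $L = h(D_2(\Delta) \cap R)$, where $D_2(\Delta)$ is the two-sided Dyck language. Using the standard embedding of the free group $F_{|\Delta|/2}$ into $F_2$, I obtain a morphism $\phi \colon \Delta^* \to F_2$ with $\phi^{-1}(1) = D_2(\Delta)$. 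Then $\mathcal{A}$ is built as the finite automaton for $R$ enriched on the $F_2$-side so that each $d$-labeled transition additionally reads $h(d) \in \Sigma \cup \{\varepsilon\}$ from the input and multiplies the $F_2$-component by $\phi(d)$. Acceptance of $w$ in $\mathcal{A}$ corresponds exactly to the existence of $v \in R$ with $h(v) = w$ and $\phi(v) = 1$, i.e.\ to $w \in L$. For the length bound I would exploit that in Chomsky normal form any derivation tree for $w \in L$ has at most $2|w| - 1$ internal nodes, so the preimage $v \in D_2(\Delta) \cap R$ produced from the tree satisfies $|v| \leq c |w|$ for a constant $c$ depending only on $\mathcal{G}$.

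The main obstacle is tracking the length bound through the construction: \cite{Kambites2009} states $(\text{i}) \Leftrightarrow (\text{ii})$ without any length estimate, and one must verify that the preimage $v \in \Delta^*$ extracted from a derivation tree is linear in $|w|$, and that the extra $\varepsilon$-transitions introduced in $\mathcal{A}$ (to account for $h(d) = \varepsilon$) do not blow up the run length beyond $|v|$. The Chomsky-normal-form hypothesis takes care of the first issue for free, and the second is immediate because each letter of $v$ triggers exactly one transition of $\mathcal{A}$. Everything else is bookkeeping.
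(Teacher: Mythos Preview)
Your outline is broadly on the right track, but it takes a different route from the paper and glosses over exactly the step that carries all the content. The paper does not invoke a ``two-sided'' Chomsky--Sch\"utzenberger theorem. Instead it introduces a length-controlled reducibility relation $K\leadsto L$ (there is a rational transduction $T$ with $K=TL$ and every $u\in K$ has a witness $v\in L$ with $|v|\le c|u|$), proves transitivity, and then establishes the chain $L\leadsto D_2\leadsto W_3\leadsto W_2$, where $D_2$ is the \emph{one-sided} Dyck language, $W_3$ the word problem of $F_3$, and $W_2$ that of $F_2$. The first link comes from an $\varepsilon$-free PDA (Greibach normal form), the last from the embedding $F_3\hookrightarrow F_2$, and the crucial middle link is obtained by inspecting Kambites' explicit transducer and computing that the output length satisfies $|v|=3|u|$.

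The gap in your argument is the sentence ``the Chomsky--Sch\"utzenberger construction, as carried out in \cite{Kambites2009}, yields $L=h(D_2(\Delta)\cap R)$ where $D_2(\Delta)$ is the two-sided Dyck language.'' The standard CS construction from a CNF grammar gives $L=h(D_1\cap R)$ with the \emph{one-sided} Dyck language $D_1$; your naive morphism $\phi$ satisfies $\phi^{-1}(1)=D_2\supsetneq D_1$, so the valence automaton you build a priori accepts $h(D_2\cap R)\supseteq L$, and equality is not automatic. Showing $D_1\cap R=D_2\cap R$ (equivalently, that no ``wrongly nested'' bracket word in $R$ cancels in the free group) is precisely the non-trivial content of Kambites' result, and he does it via the transducer trick, not via a CNF-based CS construction. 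Consequently you cannot simultaneously (a) cite Kambites for $L(\mathcal A)=L$ and (b) read off the linear length bound from the CNF derivation tree, because these refer to two different automata. Either you must verify that your CNF-based $R$ already satisfies $D_1\cap R=D_2\cap R$ (this can be done, but needs an argument), or you must track the length bound through Kambites' actual encoding---which is exactly what the paper does.
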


\begin{proof}
\newcommand{\valedge}[2]{(#1,#2)}
\begin{figure}
\centering
\begin{tikzpicture}[every state/.style={minimum size=10pt}]
\node[state, initial, initial text=]  (q0) at (0,0) {$q_0$};
\node[state,accepting by arrow]       (q1) at (4,0) {$q_1$};
\path[->] (q0) edge [loop above] node {$\valedge{x}{a\#},~\valedge{y}{b\#}$} (q0)
          (q1) edge [loop above] node {$\valedge{\varepsilon}{\#^{-1}}$} (q1)
	  (q0) [bend left] edge [above] node {$\valedge{\varepsilon}{1}$} (q1)
	  (q1) edge [below] node {$\valedge{\bar{x}}{a^{-1}\#},~\valedge{\bar{y}}{b^{-1}\#}$} (q0);
\end{tikzpicture}
\caption{Transducer used in proof of \cref{lemma-kambites}}
\label{transducer-kambites}
\end{figure}
The equivalence of (i) and (ii) is well known (see \cite{Kambites2009}) and the implication from (iii) to (ii) is trivial.
We show that (i) implies (iii). For this, 
we shall use the concept of rational transductions.  If $\Sigma$ and $\Gamma$
are alphabets, subsets $T\subseteq\Gamma^*\times\Sigma^*$ are called
\emph{transductions}.  Given a language $L\subseteq\Sigma^*$ and a transduction
$T\subseteq \Gamma^*\times\Sigma^*$, we define
\[ TL=\{u\in \Gamma^* \mid \text{$(u,v)\in T$ for some $v\in L$} \}. \]
A \emph{finite-state transducer} is a tuple
$\cA=(Q,\Sigma,\Gamma,\Delta,q_0,q_f)$, where $Q$ is a finite set of
\emph{states}, $\Sigma$ is its \emph{input alphabet}, $\Gamma$ is its
\emph{output alphabet}, $\Delta$ is a finite subset of
$Q\times\Gamma^*\times\Sigma^*\times Q$, $q_0\in Q$ is its \emph{initial
state}, and $q_f\in Q$ is its \emph{final state}. The elements of $\Delta$ are
called \emph{transitions}. We say that a pair $(u,v)\in\Gamma^*\times\Sigma^*$
is \emph{accepted by $\cA$} if there is a sequence
$(p_0,u_1,v_1,p_1),(p_1,u_2,v_2,p_2),\ldots,(p_{n-1},u_n,v_n,p_n)$ of
transitions where $n\ge 1$, $p_0=q_0$, $p_n=q_f$, $u=u_1\cdots u_n$, and
$v=v_1\cdots v_n$.  The set of all pairs $(u,v)\in\Gamma^*\times\Sigma^*$ that
are accepted by $\cA$ is denoted by $T(\cA)$. A transduction $T\subseteq
\Gamma^*\times\Sigma^*$ is called \emph{rational} if there is a finite-state
transducer $\cA$ with $T(\cA)=T$. 

Let $W_2\subseteq \{a,b,a^{-1},b^{-1}\}^*$ be the word problem of $F_2$, i.e.
\[ W_2=\{w\in \{a,b,a^{-1},b^{-1}\}^* \mid \theta(w)=1\}. \]

For languages $K\subseteq \Gamma^*$ and $L\subseteq\Sigma^*$, we write
$K\leadsto L$ if there is a rational transduction $T \subseteq \Gamma^* \times \Sigma^*$ and a constant $c$ such
that $K=TL$ and for each $u\in K$, there is a $v\in L$ with $|v|\le c|u|$ and
$(u,v)\in T$. Observe that the relation $\leadsto$ is transitive, meaning that it suffices
to show $L\leadsto W_2$ for every context-free language $L$.

Let $D_2$ be the one-sided Dyck language over two pairs of parentheses, in
other words: $D_2$ is the smallest language $D_2\subseteq
\{x,\bar{x},y,\bar{y}\}^*$ such that $\varepsilon\in D_2$ and whenever $uv\in
D_2$, we also have $uwv\in D_2$ for $w\in\{x\bar{x}, y\bar{y}\}$.

It is easy to see that $L\leadsto D_2$ for every context-free language $L$.
Indeed, an $\varepsilon$-free pushdown automaton (which exists for every
context-free language~\cite{Greibach1965}) for $L$ can be converted into a transducer
witnessing $L\leadsto D_2$. Therefore, it remains to show that $D_2\leadsto W_2$.

Let $F_3$ be the free group of rank $3$ and let $\{a,b,\#\}$ be a free
generating set for $F_3$. As above, let 
\[ W_3=\{w\in \{a,b,\#,a^{-1},b^{-1},\#^{-1}\}^* \mid \text{$w$ represents $1$ in $F_3$}\} \]
 be the word problem of $F_3$. Since $F_3$ can be embedded into
$F_2$~\cite[Proposition~3.1]{LySch77}, we clearly have $W_3\leadsto W_2$. It therefore suffices to
show $D_2\leadsto W_3$.

For this, we use a construction of Kambites~\cite{Kambites2009}.  He proves
that if $\cA$ is the transducer in \cref{transducer-kambites} and $T=T(\cA)$,
then $D_2=TW_3$.  Thus, for every $u\in D_2$, we have $(u,v)\in T$ for some
$v\in W_3$.  An inspection of $\cA$ yields that $|v|=2|u|+|v|_{\#^{-1}}$ and
$|v|_{\#}=|u|$. Since $v\in W_3$, we have $|v|_{\#^{-1}}=|v|_{\#}$ and thus
$|v|=3|u|$. Hence, the transduction $T$ witnesses $D_2\leadsto W_3$.  We have
thus shown $L\leadsto D_2\leadsto W_3\leadsto W_2$ and hence the
\lcnamecref{lemma-kambites}.
\end{proof}

Given $w$, it is easy to convert the valence automaton $\cA$ from
\cref{lemma-kambites} into an acyclic automaton that exhausts all computations
of $\cA$ of length $c\cdot|w|$. This yields the following.
\begin{prop}\label{acyclic-logcfl-hard}
For $F_2$, the membership problem for acyclic automata is \LogCFL-hard.
\end{prop}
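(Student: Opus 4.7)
The plan is to show $\LogCFL$-hardness by a direct logspace reduction from an arbitrary context-free language $L$ to the membership problem for acyclic automata over $F_2$. Since $\LogCFL$ is the closure of context-free languages under logspace reductions and such reductions compose, reducing from any fixed context-free $L$ is enough. I will use \cref{lemma-kambites}\,(iii): fix a valence automaton $\cA=(Q,\Sigma,\Delta,q_0,q_f)$ over $F_2$ accepting $L$ and a constant $c\in\N$ such that every $w\in L$ has an accepting run of $\cA$ of length at most $c\cdot|w|$.

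Given an input word $w\in\Sigma^*$ of length $n$, I would build the acyclic automaton $\cB_w=(Q',A^{\pm 1},\Delta',s_0,s_f)$ over the alphabet $\{a,b,a^{-1},b^{-1}\}$ with state set $Q'=Q\times[0,n]\times[0,cn]\cup\{s_f\}$. Intuitively, a state $(p,i,j)$ records that we have simulated $j$ steps of $\cA$, reaching state $p$ after consuming the prefix $w[1..i]$. For each transition $(p,u,g,q)\in\Delta$, each position $i\in[0,n]$ with $w[i+1..i+|u|]=u$, and each step count $j\in[0,cn-1]$, I add a transition from $(p,i,j)$ to $(q,i+|u|,j+1)$ labeled by a fixed word $v_g\in(A^{\pm 1})^*$ representing $g$ in $F_2$ (the words $v_g$ come with the fixed $\cA$, so they have constant size). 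Finally, from every state $(q_f,n,j)$ I add an $\varepsilon$-labeled edge to $s_f$, and set $s_0=(q_0,0,0)$. Because every transition strictly increments the third coordinate, $\cB_w$ is acyclic.

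For correctness, an accepting run of $\cB_w$ corresponds precisely to a computation $p_0\xrightarrow{u_1,g_1}p_1\to\cdots\to p_m\xrightarrow{u_m,g_m}p_{m+1}$ of $\cA$ with $p_0=q_0$, $p_{m+1}=q_f$, $u_1\cdots u_m=w$, and $m\le cn$, together with the concatenation $v_{g_1}\cdots v_{g_m}$ on the edge labels. This concatenation evaluates to $g_1\cdots g_m$ in $F_2$, and equals $1$ in $F_2$ exactly when that computation is an accepting run of the valence automaton. By the choice of $c$, such a bounded accepting run exists iff $w\in L$. Hence $\cB_w$ accepts some word trivial in $F_2$ iff $w\in L$.

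It remains to verify that $w\mapsto\cB_w$ is computable in logspace. The state set has size $O(n^2)$, encodable in $O(\log n)$ bits per state since $Q$, $\Sigma$, $\Delta$, $c$ and the words $v_g$ are all fixed. To enumerate $\Delta'$, a logspace transducer iterates over tuples $(p,u,g,q)\in\Delta$, positions $i\in[0,n]$ and counters $j\in[0,cn]$, and checks the equality $w[i+1..i+|u|]=u$ by comparing $O(1)$ many input cells; this is clearly doable in $O(\log n)$ space. The main point to be careful about is simply that $\varepsilon$-transitions of $\cA$ do not create cycles in $\cB_w$; this is precisely what the step counter $j$ rules out, at the cost of only a polynomial blow-up. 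This completes the reduction and establishes \cref{acyclic-logcfl-hard}, and together with \cref{thm-acyclic-logcfl} and \cref{tforest-logcfl-membership} yields the full $\LogCFL$-completeness claim of \cref{logcfl-completeness}.
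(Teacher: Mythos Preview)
Your proof is correct and follows essentially the same approach as the paper: fix a valence automaton for $L$ with the linear run-length bound from \cref{lemma-kambites}(iii), and unroll it into an acyclic automaton using a step counter together with a position counter in $w$. The only noteworthy difference is that the paper fixes a single $\LogCFL$-complete context-free language (citing Greibach's hardest CFL) and reduces from that, whereas you argue that the construction works for \emph{every} context-free $L$ and then use that $\LogCFL$ is by definition the logspace closure of the context-free languages; both arguments are valid, and yours avoids an extra citation at the cost of the slightly awkward sentence ``reducing from any fixed context-free $L$ is enough'' (which is only true because your construction is uniform in $L$---you might reword this). Your explicit extra final state $s_f$ is also a small improvement, since the paper's definition of finite automaton has a unique final state.
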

\begin{proof}
Fix a context-free language $L \subseteq \Sigma^*$ with a 
\LogCFL-complete membership problem; such languages exist \cite{Gre73}.
Fix a valence automaton $\cA=(Q,\Sigma,\Delta,q_0,q_f)$ over $F_2$ and a constant $c \in \mathbb{N}$ such that the statement
of \cref{lemma-kambites}(iii) holds for $L$, $\mathcal{A}$, and $c$.
Consider a word $w \in \Sigma^*$. From $w$ we construct an acyclic automaton $\mathcal{B}$ over 
the input alphabet $\{a,b,a^{-1},b^{-1}\}$ such that $1 \in \theta(L(\mathcal{B}))$ if and only if $w \in L$.
Let $m = |w|$, $w = a_1 a_2 \cdots a_m$ and $n = c \cdot m$.
The set of states of $\mathcal{B}$ is $[0,m] \times [0,n] \times Q$.
The transitions of $\mathcal{B}$ are defined as follows:
\begin{itemize}
\item $(i-1,j-1,p) \xrightarrow{\ x \ } (i,j,q)$ if $(p,a_i, x, q) \in \Delta$ for all $i \in [1,m]$, $j \in [1,n]$, and $x \in \{a,b,a^{-1},b^{-1}\}^*$
\item $(i,j-1,p) \xrightarrow{\ x \ } (i,j,q)$ if $(p,\varepsilon, x, q) \in \Delta$ for all $i \in [0,m]$, $j \in [1,n]$, and $x \in \{a,b,a^{-1},b^{-1}\}^*$
\end{itemize}
The initial state of $\mathcal{B}$ is $(0,0,q_0)$ and all states
$(m,j,q_f)$ with $j \in [0,n]$ are final in $\mathcal{B}$. It is then straightforward to show that 
$1 \in \theta(L(\mathcal{B}))$ if and only if $w \in L$. The intuitive idea is that in a state of $\mathcal{B}$ we store in the first component
the current position in the word $w$. In this way we enforce the simulation of a run of $\mathcal{A}$ on input $w$. In the second
component of the state we store the total number of simulated $\mathcal{A}$-transitions. In this way we make $\mathcal{B}$ acyclic.
Finally, the third state component of $\mathcal{B}$ stores the current $\mathcal{A}$-state.
\end{proof}

\begin{prop}
For $F_2$, knapsack and subset sum are \LogCFL-hard.
\end{prop}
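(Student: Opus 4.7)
The plan is to reduce the $\LogCFL$-hard membership problem for acyclic automata over $F_2$ (established in \cref{acyclic-logcfl-hard}) in logspace to both knapsack and subset sum for $F_2$. Given an acyclic automaton $\cA = (Q, \{a,b,a^{-1},b^{-1}\}, \Delta, q_0, q_f)$ whose transitions, listed in a topological order of states, are $t_i = (p_i, w_i, q_i)$ for $i \in [1,m]$, the task is to construct an exponent equation whose solutions are in bijection with the accepting runs of $\cA$ labeled by $F_2$-trivial words.

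The core device is a system of \emph{state markers} inside $F_2$ together with ``shrunken'' copies of the generators. Using the classical fact that $\{a^i b a^{-i} : i \ge 1\}$ is a free basis of a free subgroup of $F_2$ (a Schreier basis for the kernel of $F_2 \to \Z$, $a\mapsto 1$, $b\mapsto 0$), I would fix $\phi(a) = aba^{-1}$, $\phi(b) = a^2 b a^{-2}$, and $s_p = a^{p+3} b a^{-(p+3)}$ for each state $p \in Q$ (under any indexing $Q = \{0,\dots,|Q|-1\}$). Then $\{\phi(a), \phi(b)\} \cup \{s_p : p \in Q\}$ freely generates a subgroup $H \le F_2$ of rank $|Q|+2$, the induced $\phi\colon F_2\hookrightarrow F_2$ is injective, and all these words have polynomial length and are computable in logspace. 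The reduction outputs the exponent equation
$$g_1^{\x_1}\, g_2^{\x_2} \cdots g_m^{\x_m} = s_{q_0}^{-1}\, s_{q_f}, \qquad g_i \;=\; s_{p_i}^{-1}\, \phi(w_i)\, s_{q_i},$$
with pairwise distinct variables $\x_1,\dots,\x_m$. This is simultaneously a subset sum instance (variables in $\{0,1\}$) and a knapsack instance (variables in $\N$).

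For correctness I would analyze the product $g_1^{x_1}\cdots g_m^{x_m}$ inside $H$. Because $\cA$ is acyclic we have $p_i \ne q_i$ for each transition, so within any copy of $g_i$ the two $s$-letters cannot directly cancel; consequently, cancellation between $s$-letters in the expanded product can occur only at the boundary between two consecutive ``step'' copies, and only when the terminal state of one equals the initial state of the next. Long-range cancellation is ruled out because reducing an entire intervening step $s_{p_j}^{-1}\phi(w_j)s_{q_j}$ to $1$ in $H$ would force both $\phi(w_j)=1$ and $p_j=q_j$ (using that $\langle\phi(a),\phi(b)\rangle$ and $\langle s_p:p\in Q\rangle$ are free factors of $H$), contradicting acyclicity. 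Hence the product equals $s_{q_0}^{-1}s_{q_f}$ exactly when the selected steps, read in order, trace a walk $q_0 \to q_f$ whose concatenated label is trivial in $F_2$ (via injectivity of $\phi$); and acyclicity forces any such walk to be a simple path, so each $x_i \in \{0,1\}$. Conversely, any accepting run of $\cA$ labeled by $1$ in $F_2$ gives such a $\{0,1\}$-solution.

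The main obstacle is this $s$-letter cancellation analysis, whose purpose is twofold: the free generation property of $H$ is what rules out exotic cancellation patterns, while acyclicity simultaneously (i) prevents self-cancellation inside a single $g_i$ and (ii) forces every $\N$-valued solution to be $\{0,1\}$-valued, thereby unifying the hardness proofs for knapsack and subset sum.
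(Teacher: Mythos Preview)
Your proposal is correct and follows essentially the same approach as the paper: reduce from the acyclic-automaton membership problem (\cref{acyclic-logcfl-hard}), encode states by the conjugates $a^iba^{-i}$ so that state markers together with an embedded copy of $F_2$ freely generate a subgroup, define one generator per transition listed in topological order, and argue that acyclicity forces every $\N$-solution to lie in $\{0,1\}^m$. The only differences are cosmetic: the paper uses $\tilde{t}=\alpha_p\varphi(w)\alpha_q^{-1}$ with target $\alpha_{q_0}\alpha_{q_f}^{-1}$ (the inverse of your convention) and places the embedded generators at indices $n{+}1,n{+}2$ rather than $1,2$, and it states the equivalence without spelling out the cancellation analysis you sketch.
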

\begin{proof}
Let $\mathcal{A} = (Q,\{a,b,a^{-1},b^{-1}\}, \Delta, q_0, q_f)$ be an acyclic automaton. We construct words $w, w_1, \ldots, w_m \in \{a,b,a^{-1},b^{-1}\}$
such that the following three statements are equivalent:
\begin{enumerate}[label=(\roman*)]
\item  $1 \in \theta(L(\mathcal{A})$.
\item $\theta(w) \in \theta(w_1^* w_2^* \cdots w_m^*)$.
\item $\theta(w) \in \theta(w_1^{e_1} w_2^{e_2} \cdots w_m^{e_m})$   for some $e_1, e_2, \ldots, e_m \in \{0,1\}$.
\end{enumerate}
W.l.o.g. assume that $Q = \{1,\ldots, n\}$, where $1$ is the initial state and $n$ is the unique final state of $\mathcal{A}$.

 Let $\alpha_i = a^i b a^{-i}$ for $i\in[1,n+2]$. It is well known that
the $\alpha_i$ generate a free subgroup of rank $n+2$ in $F_2$ \cite[Proposition~3.1]{LySch77}. Define the embedding $\varphi \colon F_2 \to F_2$
by $\varphi(a) = \alpha_{n+1}$ and $\varphi(b) = \alpha_{n+2}$. For a transition
$t = (p,w,q) \in \Delta$ let $\tilde{t} =  \alpha_p \varphi(w) \alpha_q^{-1}$.  Let $\Delta=\{t_1, \ldots, t_m\}$ such that 
$t_i = (p,a,q)$ and $t_j = (q,b,r)$ implies $i < j$. Since $\mathcal{A}$ is acyclic, such an enumeration must exist.
Together with the fact that the $\alpha_i$ generate a free group, it follows that the following three statements are equivalent:
\begin{enumerate}[label=(\roman*)]
\item  $1 \in \theta(L(\mathcal{A})$.
\item $\theta(\alpha_1 \alpha_n^{-1}) \in \theta(\tilde{t}_1^* \, \tilde{t}_2^* \cdots \tilde{t}_m^*)$.
\item $\theta(\alpha_1 \alpha_n^{-1}) \in \theta(\tilde{t}_1^{e_1} \,\tilde{t}_2^{e_2} \cdots \tilde{t}_m^{e_m})$   for some $e_1, e_2, \ldots, e_m \in \{0,1\}$.
\end{enumerate}
This shows the proposition.
\end{proof}

\section{$\NP$-completeness}

In \cite{LohreyZ16}, the authors proved that knapsack for the graph group $\dG(\mathsf{C4}) \cong F_2 \times F_2$ is {\sf NP}-complete.
Here we extend this result to all graph groups $\dG(A,I)$ where $(A,I)$ is not a transitive forest. For this, it suffices to show that 
knapsack for $\dG(\mathsf{P4})$ is {\sf NP}-complete. We take the copy $(\{a,b,c,d\}, I)$ of $\mathsf{P4}$ shown in Figure~\ref{fig:P4-C4}.

An {\em acyclic loop automaton} is a finite automaton $\mathcal{A} =
(Q,\Sigma, \Delta, q_0, q_f)$ such that there exists a linear order $\preceq$
on $\Delta$ having the property that for all $(p,u,q), (q,v,r) \in \Delta$ it
holds $(p,u,q) \preceq (q,v,r)$. Thus, an acyclic loop automaton is obtained
from an acyclic automaton by attaching to some of the states a unique loop.
For a trace monoid $\dM(A,I)$, the  \emph{intersection nonemptiness problem for acyclic loop automata}
is the following computational problem:
 
\smallskip
\noindent
{\bf Input:}  Two acyclic loop automata $\mathcal{A}_1$, $\mathcal{A}_2$ over the input alphabet $A$.

\smallskip
\noindent
{\bf Question:} Does $[L(\mathcal{A}_1)]_I \cap [L(\mathcal{A}_2)]_I \neq \emptyset$ hold?

\smallskip
\noindent
Aalbersberg and Hoogeboom~\cite{AaHo89} proved that for the trace monoid
$\dM(\mathsf{P4})$ the intersection nonemptiness problem for arbitrary finite
automata is undecidable. We use their technique to show: 
\begin{lem} \label{lemma-intersection-P4}
For $\dM(\mathsf{P4})$, intersection nonemptiness for acyclic loop automata
is {\sf NP}-hard.
\end{lem}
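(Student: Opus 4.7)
The plan is to adapt the Aalbersberg--Hoogeboom encoding \cite{AaHo89} of the Post Correspondence Problem via $\dM(\mathsf{P4})$-traces to the bounded regime imposed by acyclic loop automata, reducing from an NP-complete source problem. Natural candidates are a bounded version of PCP (where a sequence of polynomial length is sought) or SUBSET SUM with binary-encoded integers; the latter needs the loops of an acyclic loop automaton to succinctly represent large values, whereas a bounded-PCP source admits a purely acyclic construction (loops trivially attached and never used).

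The structural fact about $\mathsf{P4}$ that I will exploit is that the dependence relation (the complement of $I$) has maximal cliques $\{a,c\}$, $\{a,d\}$, and $\{b,d\}$, so two words $u,v\in\{a,b,c,d\}^*$ represent the same trace iff the projections $\pi_{\{a,c\}}, \pi_{\{a,d\}}, \pi_{\{b,d\}}$ agree on $u$ and $v$. Thus $a$ can slide past $b$, $b$ past $c$, and $c$ past $d$, but the orderings within $\{a,c\}$, within $\{a,d\}$, and within $\{b,d\}$ are rigid. The two ``sides'' $\{a,c\}^*$ and $\{b,d\}^*$ each embed freely, while the cross-dependency $\{a,d\}$ pins down the positional alignment between the two sides, which is the essential asymmetry of $\mathsf{P4}$ missing from $\mathsf{C4}$.

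Concretely, $\mathcal{A}_1$ will branch through the acyclic part to non-deterministically guess a sequence (indices of a PCP solution, or a bit vector selecting a subset) and emit along the chosen path a trace that concatenates an encoding $\alpha(\cdot)\in\{a,c\}^*$ of ``left'' data with a shadow $\beta(\cdot)\in\{b,d\}^*$ of ``right'' data. The automaton $\mathcal{A}_2$ will symmetrically emit the analogous trace with the roles of left and right swapped, so that equality in $\dM(\mathsf{P4})$ forces (i) agreement of the $\{a,c\}$-projections of both sides, (ii) agreement of the $\{b,d\}$-projections, and (iii) agreement of the $\{a,d\}$-projection. The first two yield coincidence of the ``left'' and ``right'' concatenations, the third enforces their positional alignment; combined, they encode the matching/sum condition of the source instance. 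Loops of the form $u^*$ are used, if needed, to generate succinct representations of large numerical parameters in the SUBSET-SUM variant.

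The main obstacle will be the soundness direction: showing that any shared trace $[w]\in[L(\mathcal{A}_1)]_I\cap[L(\mathcal{A}_2)]_I$ really decodes to a valid solution of the source instance. The partial commutations of $\mathsf{P4}$ allow non-trivial rearrangements, so a carelessly designed encoding could admit spurious common traces that correspond to no legitimate solution. Ruling these out requires combining the three clique projections — with the non-commuting pair $\{a,d\}$ doing the decisive work of recovering positional information — to uniquely reconstruct the guessed sequence from $[w]$. Completeness (producing a shared trace from a given solution) should then follow routinely from the definitions of $\mathcal{A}_1$ and $\mathcal{A}_2$, together with a polynomial size bound of the construction in the source instance.
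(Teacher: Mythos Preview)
Your plan heads in a reasonable direction but differs substantially from the paper, and the concrete encoding you sketch has a gap.

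The paper does \emph{not} go through PCP or subset sum. It reduces from 3SAT, encoding a truth assignment on variables $x_1,\ldots,x_n$ by a natural number $N$ via the first $n$ primes $p_1,\ldots,p_n$ (so $x_i$ is true iff $p_i\mid N$). Then $\mathcal{A}_1$ accepts $\prod_{i=1}^m\{\,a(bc)^{N_i}d\mid N_i\in S_i\,\}$, where $S_i$ is the (simple arithmetic-progression) set of $N$'s satisfying clause $C_i$, while $\mathcal{A}_2$ accepts $b^*(ad(bc)^*)^{m-1}ad\,c^*$. The single trace identity
\[
(a(bc)^Nd)^m \;\equiv_I\; b^N\,(ad(bc)^N)^{m-1}\,ad\,c^N
\]
does all the work: a common trace forces $N_1=\cdots=N_m$, hence a satisfying assignment. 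Loops are genuinely used (to generate $(bc)^{N}$ for $N$ up to $\prod_i p_i$), and both automata share the rigid $\{a,d\}$-skeleton $(ad)^m$.

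The gap in your sketch is precisely at the point you flag as delicate. If $\mathcal{A}_1$ emits words of the shape $\alpha(\cdot)\beta(\cdot)$ with $\alpha(\cdot)\in\{a,c\}^*$ and $\beta(\cdot)\in\{b,d\}^*$ (even tile-by-tile), then $\pi_{\{a,d\}}$ of such a word lies in $(a^*d^*)^*$ and records only block \emph{counts}, not the positional alignment you claim it pins down. Worse, with $\mathcal{A}_1$ and $\mathcal{A}_2$ guessing index sequences $\sigma,\sigma'$ independently, your ``swap left/right'' matching yields $u_\sigma=v_{\sigma'}$ and $v_\sigma=u_{\sigma'}$, which is not the PCP condition $u_\sigma=v_\sigma$. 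The Aalbersberg--Hoogeboom mechanism works because $a$ and $d$ serve as \emph{interleaved markers} that both automata must reproduce identically, with payload placed between $a$ and $d$ in one automaton and between $d$ and $a$ in the other; that is what the paper's $(a(bc)^Nd)^m$ versus $b^*(ad(bc)^*)^{m-1}adc^*$ achieves. Your encoding does not yet arrange this, so as written the soundness direction fails before you reach the combinatorics you anticipate.
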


\begin{proof}
We give a reduction from 3SAT. Let $\varphi = \bigwedge_{i=1}^m C_i$ where for every $i\in[1,m]$, 
$C_i = (L_{i,1} \vee L_{i,2} \vee L_{i,3})$ is a clause consisting of three literals. Let $x_1, \ldots, x_n$ be 
the boolean variables that occur in $\varphi$. In particular, every literal $L_{i,j}$ belongs to the set $\{x_1, \ldots, x_n, \neg x_1, \ldots, \neg x_n \}$.

Let $p_1, p_2, \ldots, p_n$ be a list of the first $n$ prime numbers. So, for each boolean variable $x_i$ we have the 
corresponding prime number $p_i$. We encode a valuation $\beta \colon \{x_1, \ldots, x_n \} \to \{0,1\}$ by any natural number
$N$ such that $N \equiv 0 \bmod p_i$ if and only if $\beta(x_i) = 1$.
For a positive literal $x_i$ let $S(x_i) = \{ p_i \cdot n \mid n \in \mathbb{N}\}$ and for a negative
literal $\neg x_i$ let $S(\neg x_i) = \{ p_i \cdot n + r \mid n \in \mathbb{N}, r\in[1,p_i-1]\}$.
Moreover, for every $i\in[1,m]$ let
$
S_i = S(L_{i,1}) \cup S(L_{i,2}) \cup S(L_{i,3}) . 
$
Thus, $S_i$ is the set of all numbers that encode a valuation, which makes the clause $C_i$ true.
Hence, the set
$
S = \bigcap_{i=1}^n S_i
$
encodes the set of all valuations that make $\varphi$ true.

We first construct an acyclic loop automaton $\mathcal{A}_1$ with
\[
L(\mathcal{A}_1) =  \prod_{i=1}^m  \{ a  (bc)^{N_i}  d  \mid N_i \in S_i \} .
\]    
Note that $\varphi$ is satisfiable iff $[L(\mathcal{A}_1)]_I$ contains a trace from   $[\{ (a  (bc)^N  d)^m \mid N \in \mathbb{N}\}]_I$.
We will ensure this property with a second acyclic loop automaton $\mathcal{A}_2$ that satisfies the equality
$
L(\mathcal{A}_2) = b^* (ad (bc)^*)^{m-1} ad c^* .
$

We claim that $[L(\mathcal{A}_1)]_I \cap [L(\mathcal{A}_2)]_I = [\{ (a  (bc)^N  d)^m \mid N \in S\}]_I$.
First assume that $w \equiv_I (a  (bc)^N  d)^m$ for some $N \in S$. 
We have 
$$
w \equiv_I (a  (bc)^N  d)^m \equiv_I b^N (ad (bc)^N)^{m-1} ad c^N
$$
and thus $[w]_I \in  [L(\mathcal{A}_2)]_I$. 
Moreover, since $N \in S$ we get $[w]_I \in  [L(\mathcal{A}_1)]_I$. 
For the other direction, let $[w]_I \in [L(\mathcal{A}_1)]_I \cap [L(\mathcal{A}_2)]_I$.
Thus 
$$
w \equiv_I  \prod_{i=1}^m (a  (bc)^{N_i}  d) \equiv b^{N_1} \bigg( \prod_{i=1}^{m-1} (ad  c^{N_i} b^{N_{i+1}}) \bigg) ad c^{N_m} ,
$$
where $N_i \in S_i$ for $i\in[1,m]$. Moreover, the fact that $[w]_I \in [L(\mathcal{A}_2)]_I$
means hat there are $k_0, k_1, \ldots, k_{m-1}, k_m \geq 0$ with
\begin{align*}
b^{N_1} \bigg( \prod_{i=1}^{m-1} (ad  c^{N_i} b^{N_{i+1}}) \bigg) ad c^{N_m}  & \equiv_I 
 b^{k_0} \bigg( \prod_{i=1}^{m-1} (ad (bc)^{k_i}) \bigg) ad c^{k_m} \\
&\equiv_I  b^{k_0} \bigg( \prod_{i=1}^{m-1} (ad b^{k_i} c^{k_i}) \bigg) ad c^{k_m} .
\end{align*}
Since every symbol is dependent from $a$ or $d$, this identity implies $N_i = N_{i+1}$ for $i\in[1,m-1]$.
Thus, $[w]_I \in  [\{ (a  (bc)^N  d)^m \mid N \in S\}]_I$.
\end{proof}
For a graph group $\dG(A,I)$ the  \emph{membership problem for acyclic loop automata}
is the following computational problem:
 
\smallskip
\noindent
{\bf Input:}  An acyclic loop automaton $\mathcal{A}$  over the input alphabet $A \cup A^{-1}$.

\smallskip
\noindent
{\bf Question:} Is there a word $w \in L(\mathcal{A})$ such that $w=1$ in $\dG(A,I)$?

\smallskip
\noindent

It is straightforward to reduce the intersection nonemptiness problem for acyclic loop automata over $\dM(A,I)$
to the membership problem for acyclic loop automata over $\dG(A,I)$.
For the rest of this section let $\Sigma =  \{a,b,c,d,a^{-1},b^{-1},c^{-1},d^{-1}\}$ and 
let $\theta \colon \Sigma^* \to \dG(P_4)$ be the canonical homomorphism that maps a word over $\Sigma$
to the corresponding group element.
\begin{lem} \label{lemma-acyclic-loop-P4}
For $\dG(\mathsf{P4})$, the membership problem for acyclic loop automata
is {\sf NP}-hard.
\end{lem}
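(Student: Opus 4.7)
The plan is to reduce the intersection nonemptiness problem for acyclic loop automata over $\dM(\mathsf{P4})$, which is $\NP$-hard by \cref{lemma-intersection-P4}, to the membership problem for acyclic loop automata over $\dG(\mathsf{P4})$. Given two acyclic loop automata $\mathcal{A}_1, \mathcal{A}_2$ over the alphabet $\{a,b,c,d\}$, the goal is to build an acyclic loop automaton $\mathcal{B}$ over $\Sigma$ such that $L(\mathcal{B})$ contains a word $w$ with $\theta(w) = 1$ if and only if $[L(\mathcal{A}_1)]_I \cap [L(\mathcal{A}_2)]_I \neq \emptyset$.

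First, I would construct an acyclic loop automaton $\mathcal{A}_2^R$ whose language is $\overline{L(\mathcal{A}_2)} = \{\overline{v} \mid v \in L(\mathcal{A}_2)\}$, where for $v = a_1 a_2 \cdots a_n \in \{a,b,c,d\}^*$ one sets $\overline{v} = a_n^{-1} \cdots a_2^{-1} a_1^{-1}$. This is achieved by swapping the initial and final states of $\mathcal{A}_2$, reversing each transition, and replacing each edge label $u$ by $\overline{u}$. If $\preceq$ is the linear order on the transitions of $\mathcal{A}_2$ witnessing the acyclic loop property, then its reversal $\preceq'$ has the same property for $\mathcal{A}_2^R$: a consecutive pair of transitions in $\mathcal{A}_2^R$ corresponds to a consecutive pair in $\mathcal{A}_2$ traversed in the opposite order, so the relevant inequality in $\preceq'$ follows from the one in $\preceq$. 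Next, I would form $\mathcal{B}$ by taking the disjoint union of $\mathcal{A}_1$ and $\mathcal{A}_2^R$, adding a single $\varepsilon$-transition from the final state of $\mathcal{A}_1$ to the initial state of $\mathcal{A}_2^R$, and declaring $q_0^1$ the initial and $q_0^2$ the final state of $\mathcal{B}$. Concatenating the linear orders (first that of $\mathcal{A}_1$, then the bridging transition, then $\preceq'$) yields a linear order on the transitions of $\mathcal{B}$ witnessing that $\mathcal{B}$ is an acyclic loop automaton, and by construction $L(\mathcal{B}) = L(\mathcal{A}_1)\cdot \overline{L(\mathcal{A}_2)}$.

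For correctness I would argue: for $u \in L(\mathcal{A}_1)$ and $v \in L(\mathcal{A}_2)$ one has $\theta(u\overline{v}) = \theta(u)\theta(v)^{-1}$, so $\theta(u\overline{v}) = 1$ in $\dG(\mathsf{P4})$ is equivalent to $\theta(u) = \theta(v)$. The classical embedding of the trace monoid $\dM(\mathsf{P4})$ into the graph group $\dG(\mathsf{P4})$ (two positive words represent the same element of the graph group iff they are $I$-equivalent) then gives $\theta(u) = \theta(v)$ iff $[u]_I = [v]_I$. Consequently, $\mathcal{B}$ accepts some word trivial in $\dG(\mathsf{P4})$ iff there are $u \in L(\mathcal{A}_1)$ and $v \in L(\mathcal{A}_2)$ with $[u]_I = [v]_I$, that is, iff $[L(\mathcal{A}_1)]_I \cap [L(\mathcal{A}_2)]_I \neq \emptyset$. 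The whole reduction can clearly be performed in logarithmic space.

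The only step that is not entirely mechanical is verifying that reversing $\mathcal{A}_2$ and then concatenating preserves the acyclic loop structure with a globally consistent linear order on transitions; once this bookkeeping is done, the lemma follows immediately from \cref{lemma-intersection-P4}.
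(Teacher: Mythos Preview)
Your proposal is correct and follows essentially the same approach as the paper: reduce from \cref{lemma-intersection-P4} by forming an acyclic loop automaton for $L(\mathcal{A}_1)\cdot L(\mathcal{A}_2)^{-1}$ via reversal of $\mathcal{A}_2$ and concatenation, then use the embedding of $\dM(\mathsf{P4})$ into $\dG(\mathsf{P4})$. The paper declares the construction ``straightforward'' without details, whereas you spell out why the reversed linear order on transitions witnesses the acyclic loop property for the combined automaton.
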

\begin{proof}
The lemma follows easily from Lemma~\ref{lemma-intersection-P4}.
Note that $[L(\mathcal{A}_1)]_I \cap [L(\mathcal{A}_2)]_I \neq \emptyset$ if and only if $1 \in \theta(L(\mathcal{A}_1) L(\mathcal{A}_2)^{-1})$ in the 
graph group $\dG(\mathsf{P4})$.
Moreover, it is straightforward to construct from acyclic loop automata $\mathcal{A}_1$ and $\mathcal{A}_2$ an acyclic loop automaton for 
$L(\mathcal{A}_1) L(\mathcal{A}_2)^{-1}$. We only have to  replace every transition label $w$ in  $\mathcal{A}_2$ by $w^{-1}$, then reverse all transitions in $\mathcal{A}_2$ 
and concatenate the resulting automaton with $\mathcal{A}_1$ on the left.
\end{proof}

We can now use a construction from \cite{LohSte08} to reduce knapsack to membership for acyclic loop automata.
\begin{lem} \label{lemma-knapsack-P4}
Knapsack for the graph group $\dG(\mathsf{P4})$ is {\sf NP}-hard.
\end{lem}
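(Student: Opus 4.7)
The plan is a logspace reduction from the membership problem for acyclic loop automata over $\dG(\mathsf{P4})$, which is $\NP$-hard by \cref{lemma-acyclic-loop-P4}, to knapsack over $\dG(\mathsf{P4})$. Given an acyclic loop automaton $\mathcal{A}=(Q,\Sigma,\Delta,q_0,q_f)$, I enumerate $\Delta$ as $e_1 \prec e_2 \prec \cdots \prec e_m$ consistently with the linear order on transitions, and for every state $p \in Q$ I introduce a \emph{state marker} $A_p := a^p d a^{-p} \in \dG(\mathsf{P4})$; these elements are free generators of a free subgroup of $\langle a, d\rangle \cong F_2 \le \dG(\mathsf{P4})$. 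For each transition $e_i = (p_i, u_i, q_i)$ set $g_i := A_{p_i}\,\theta(u_i)\,A_{q_i}^{-1}$ and let $g := A_{q_0} A_{q_f}^{-1}$. The output knapsack instance asks whether $g_1^{\x_1} g_2^{\x_2} \cdots g_m^{\x_m} = g$ has a solution in $\N^m$.

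For the forward direction, an accepting run of $\mathcal{A}$ uses each path transition at most once (in the linear order) with self-loops iterated any number of times at the intervening states. Setting $\x_i$ equal to this iteration count (respectively $1$ if $e_i$ is a used path transition and $0$ otherwise) makes consecutive marker factors cancel via $A_{q_{i_j}}^{-1} A_{p_{i_{j+1}}} = 1$, which holds precisely because the run respects state transitions; the product telescopes to $A_{q_0}\,\theta(w)\,A_{q_f}^{-1}$, and this equals $A_{q_0} A_{q_f}^{-1}$ since the accepted word $w$ satisfies $\theta(w) = 1$.

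For the converse, I plan to apply the homomorphism $\phi \colon \dG(\mathsf{P4}) \to F_2 = \langle a,d\rangle$ defined by $a,d \mapsto a,d$ and $b,c \mapsto 1$, which is well-defined because each commutation relation of $\dG(\mathsf{P4})$ projects to a trivial identity in $F_2$. The projected equation $\prod_i \phi(g_i)^{\x_i} = A_{q_0} A_{q_f}^{-1}$ then lives in $F_2$, where the classical free-group marker argument forces $\x_i \in \{0,1\}$ for every path transition and forces the support of $(\x_i)$ to trace a sequence of matching-state transitions from $q_0$ to $q_f$. Once this structural conclusion holds, the same telescoping identity already holds back in $\dG(\mathsf{P4})$ and collapses the original equation to $A_{q_0}\theta(w)A_{q_f}^{-1} = A_{q_0}A_{q_f}^{-1}$, whence $\theta(w) = 1$. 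The main obstacle lies in the $F_2$-side marker argument: forcing $\x_i \in \{0,1\}$ on path transitions requires $\phi(g_i)$ to be a non-degenerate free-group element, so one has to rule out pathological cases in which $\phi(\theta(u_i))$ happens to equal $A_{p_i}^{-1}A_{q_i}$ and collapses $\phi(g_i)$ to $1$. This is the normal-form bookkeeping handled by the construction of \cite{LohSte08}, presumably via a preprocessing step that perturbs each $u_i$ (e.g.~by conjugation or by appending fresh commuting letters that are invisible under $\phi$) so that each reduced $\phi(g_i)$ is cyclically reduced and the naive marker argument applies verbatim.
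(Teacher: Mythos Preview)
Your overall reduction is the same one the paper uses: start from acyclic loop automaton membership (\cref{lemma-acyclic-loop-P4}), encode each transition $e_i=(p_i,u_i,q_i)$ as a group element sandwiched between state markers, enumerate the transitions consistently with the acyclic-loop order, and ask whether the ordered product of powers hits the initial/final marker pair. The forward direction is exactly as you describe.

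The converse, however, has a real gap that you correctly flag but do not close. Your plan is to project via $\phi\colon\dG(\mathsf{P4})\to\langle a,d\rangle\cong F_2$ and run a free-group marker argument on $\prod_i\phi(g_i)^{x_i}=A_{q_0}A_{q_f}^{-1}$. The trouble is that the transition labels $u_i$ are arbitrary words over $\{a,b,c,d\}^{\pm1}$, so $\phi(\theta(u_i))$ is an \emph{arbitrary} element of $F_2$ and can interact freely with your markers $A_p=a^pda^{-p}$: nothing prevents $\phi(g_i)=1$ for a path transition, nor $\phi(\theta(u_i))$ from containing marker-shaped syllables that destroy the readability of the state sequence in the reduced word. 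Your two proposed repairs cannot work: appending $b$'s or $c$'s that are \emph{invisible under $\phi$} by definition does not change $\phi(g_i)$, and conjugation preserves the property $\phi(g_i)=1$. So the projection route, as stated, does not deliver the structural conclusion you need.

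The paper avoids any projection. It takes markers $\widetilde{q}=(ada)^q d(ada)^{-q}$ together with the doubling morphism $\varphi(x)=xx$, sets $\tilde t=\widetilde{p}\,\varphi(w)\,\widetilde{q}^{-1}$, and then quotes \cite{LohSte08} for the equivalence $1\in\theta(L(\mathcal{A}))\iff\theta(\widetilde{q_0}\,\widetilde{q_f}^{-1})\in\theta(\{\tilde t\mid t\in\Delta\}^*)$. The doubling is precisely what separates ``content'' from ``markers'' inside $\dG(\mathsf{P4})$ itself (no projection): a doubled word cannot reduce to the syllable pattern of a marker, so the normal-form argument of \cite{LohSte08} goes through in the full group. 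Once that black box is in place, the only remaining step is to pass from an arbitrary product over $\Delta$ to the ordered product $\tilde t_1^*\cdots\tilde t_m^*$, which is immediate from the acyclic-loop enumeration---exactly the easy direction you wrote out.
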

\begin{proof}
By Lemma~\ref{lemma-acyclic-loop-P4} it suffices to reduce for $\dG(\mathsf{P4})$
the membership problem for  acyclic loop automata to knapsack.
Let $\mathcal{A} = (Q, \Sigma,\Delta, q_0, q_f)$ be an acyclic loop automaton with transitions
$\Delta \subseteq Q \times  \Sigma^* \times Q$. W.l.o.g. assume that $Q = \{1, \ldots, n\}$.

We reuse a construction from \cite{LohSte08},
where the rational subset membership problem for $\dG(\mathsf{P4})$ was reduced to the submonoid
membership problem for $\dG(\mathsf{P4})$. 
For a state $q \in Q$ let $\widetilde{q} =  (ada)^q d (ada)^{-q} \in \Sigma^*$.
Let us fix the morphism $\varphi \colon \Sigma^* \to \Sigma^*$ with $\varphi(x) = xx$ for $x \in \Sigma$.
For a transition
$t = (p,w,q) \in \Delta$ let $\tilde{t} = \widetilde{p} \,\varphi(w)\, \widetilde{q}^{-1}$ and define
$S=\{ \tilde{t} \mid t \in \Delta \}^*$. 
In \cite{LohSte08} it was shown that $1 \in \theta(L(A))$ if and only if $\theta(\widetilde{q_0} \, \widetilde{q_f}^{-1}) \in \theta(S)$.

We construct in polynomial time a knapsack instance over $\dG(P_4)$ from the automaton $\mathcal{A}$ as follows:
Let us choose an enumeration $t_1, t_2, \ldots, t_m$ of the transitions of $\mathcal{A}$ such that the following holds, 
where $t_i = (p_i, w_i, q_i)$: If $q_j = p_k$ then $j \leq k$. Since $\mathcal{A}$
is an acyclic loop automaton such an enumeration exists.  
The following claim proves the theorem.

\medskip
\noindent
{\em Claim:} $1 \in \theta(L(\mathcal{A}))$ if and only if $\theta(\widetilde{q_0}\, \widetilde{q_f}^{-1}) \in \theta( \tilde{t}_1^*\, \tilde{t}_2^* \cdots \tilde{t}_m^*)$.

\medskip
\noindent
One direction is clear: If  $\theta(\widetilde{q_0}\, \widetilde{q_f}^{-1}) \in \theta( \tilde{t}_1^*\, \tilde{t}_2^* \cdots \tilde{t}_m^*)$, then 
 $\theta(\widetilde{q_0} \,\widetilde{q_f}^{-1}) \in \theta(S)$. Hence, by \cite{LohSte08} we have $1 \in \theta(L(\mathcal{A}))$. On the other hand, if 
 $1 \in \theta(L(\mathcal{A}))$, then there exists a path in $\cA$ of the form
 $$
 q_0 = s_0 \xrightarrow{a_1} s_1 \xrightarrow{a_2} s_2 \cdots s_{k-1} \xrightarrow{a_k} s_k  = q_f 
 $$   
 such that $\theta(a_1 a_2 \cdots a_k) = 1$. Let $(s_{j-1},a_j,s_j) = t_{i_j}$, where we refer to the above enumeration of all transitions. 
 Then, we must have $i_1 \leq i_2 \leq \cdots \leq i_k$. Moreover, we have 
 $$\theta(\widetilde{q_0}\, \widetilde{q_f}^{-1}) =  \theta(\widetilde{q_0} \, a_1 a_2 \cdots a_k \, \widetilde{q_f}^{-1}) = \theta(\tilde{t}_{i_1} \tilde{t}_{i_2} \cdots \tilde{t}_{i_k}) 
 \in  \theta( \tilde{t}_1^*\, \tilde{t}_2^* \cdots \tilde{t}_m^*).$$
 This proves the claim and hence the theorem.
\end{proof}

\begin{thm}
Let $(A,I)$ be an independence alphabet, which is not a transitive forest. Then,
knapsack for the graph group $\dG(A,I)$ is {\sf NP}-complete.
\end{thm}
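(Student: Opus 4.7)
The plan is to combine three ingredients: the $\mathsf{NP}$ upper bound from \cite{LohreyZ16}, Wolk's characterization of transitive forests via forbidden induced subgraphs $\mathsf{P4}$ and $\mathsf{C4}$, and the fact that graph groups of induced subgraphs embed into the ambient graph group. First I would recall that by \cite{LohreyZ16}, knapsack for every graph group $\dG(A,I)$ is in $\mathsf{NP}$, so only the lower bound requires work.

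For the lower bound, I would argue as follows. Since $(A,I)$ is not a transitive forest, Wolk's theorem \cite{Wolk65} implies that $(A,I)$ contains an induced copy of $\mathsf{P4}$ or of $\mathsf{C4}$. Call this induced subgraph $(B,J)$. The key structural fact I would invoke is that whenever $(B,J)$ is an induced subgraph of $(A,I)$, the natural map $\dG(B,J)\to\dG(A,I)$ sending each generator of $B$ to itself is an injective group homomorphism. This is standard for graph groups: one way to see it is to observe that the retraction $\dG(A,I)\to\dG(B,J)$ which sends every generator in $A\setminus B$ to $1$ and is the identity on $B$ is a left inverse of the inclusion.

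Given this embedding, NP-hardness transfers by a trivial reduction: an instance $g_1^{\x_1}\cdots g_k^{\x_k}=g$ of knapsack for $\dG(B,J)$ can be read unchanged as an instance of knapsack for $\dG(A,I)$, using the same words over $B^{\pm 1}\subseteq A^{\pm 1}$. Injectivity of the embedding ensures that solvability in $\dG(B,J)$ is equivalent to solvability in $\dG(A,I)$. Applying this to the two cases $(B,J)=\mathsf{C4}$ (where $\mathsf{NP}$-hardness was established in \cite{LohreyZ16}) and $(B,J)=\mathsf{P4}$ (handled by \cref{lemma-knapsack-P4} above) completes the argument.

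There is essentially no obstacle here beyond citing the three facts in the right order; the only non-routine point to state carefully is the retraction argument that gives the embedding of $\dG(B,J)$ into $\dG(A,I)$ when $(B,J)$ is induced, since this relies crucially on \emph{induced} rather than arbitrary subgraphs (an arbitrary subgraph would add commutation relations and break injectivity).
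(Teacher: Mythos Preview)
Your proposal is correct and follows essentially the same approach as the paper: invoke Wolk's characterization to find an induced $\mathsf{P4}$ or $\mathsf{C4}$, use the subgroup embedding $\dG(B,J)\hookrightarrow\dG(A,I)$ for induced subgraphs, and inherit $\mathsf{NP}$-hardness from \cite{LohreyZ16} (for $\mathsf{C4}$) or \cref{lemma-knapsack-P4} (for $\mathsf{P4}$). You are slightly more explicit than the paper in spelling out the retraction argument for the embedding and in recalling the $\mathsf{NP}$ upper bound from \cite{LohreyZ16}, but the logical structure is identical.
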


\begin{proof}
If $(A,I)$ is not a transitive forest, then $\mathsf{P4}$ or $\mathsf{C4}$ is an induced subgraph of $(A,I)$ \cite{Wolk65}. 
Thus, $\dG(\mathsf{P4})$ or $\dG(\mathsf{C4}) \cong F_2 \times F_2$ is a subgroup of $\dG(A,I)$. Hence,
{\sf NP}-hardness of knapsack for $\dG(A,I)$ follows from \cite{LohreyZ16} or Lemma~\ref{lemma-knapsack-P4}.
\end{proof}

\section{Open problems}

Our proof for the  {\sf NP}-completeness of knapsack for $\dG(A,I)$ if $(A,I)$ is not a transitive forest makes use of solutions with 
exponentially large values for exponent variables. As a consequence, we cannot adapt our proof to the subset sum problem.
For $\dG(\mathsf{C4}) \cong F_2 \times F_2$ we have shown in \cite{LohreyZ16} that the subset sum problem is   {\sf NP}-complete.
But for $\dG(\mathsf{P4})$ it remains open whether subset sum is  {\sf NP}-complete.

Recall from Remark~\ref{remark} that the knapsack variant, where we ask for a solution in $\Z^k$ (let us call this variant 
\emph{integer-valued knapsack})  can be reduced to our version of knapsack,
where the solution is restricted to $\N^k$. Hence, there  might be groups where integer-valued knapsack is easier than
standard knapsack. Indeed, in the introduction we remarked
that integer-valued knapsack for the group $\Z$ with binary encoded integers 
can be solved in polynomial time. This motivates the investigation of integer-valued knapsack for other 
graph groups. In \cite{LohreyZ16} we proved that integer-valued knapsack for $\dG(\mathsf{C4})$ is {\sf NP}-complete,
even if group elements are given by uncompressed words. On the other hand, our {\sf NP}-hardness proof for $\dG(\mathsf{P4})$
makes essential use of the fact that solution vectors are restricted to $\N^k$, and it remains open whether integer-valued knapsack for
$\dG(\mathsf{P4})$ is  {\sf NP}-hard. In this context, it is interesting to note that there are some interesting differences between
$\dG(\mathsf{P4})$ and $\dG(\mathsf{C4})$ with respect to algorithmic problems. For instance, whereas the subgroup membership
problem for $\dG(\mathsf{C4})$ is undecidable \cite{Mih66}, the subgroup membership problem for $\dG(\mathsf{P4})$ (and any graph group
$\dG(\Gamma)$ with $\Gamma$ chordal) is decidable \cite{KaWeMy05,Loh09ProcAMS}.

As remarked in the introduction, there are nilpotent groups with an undecidable
knapsack problem. If a nilpotent group is an automatic group, then it must be virtually abelian \cite{EpsCHLPT92}.
This raises the question whether knapsack is decidable for every automatic group. 

\bibliography{bib}
\bibliographystyle{plain}

\end{document}